\documentclass[11pt]{amsart}
\usepackage{amsmath,amssymb,mathrsfs,mathtools}
\usepackage{xcolor}
\usepackage[colorlinks=true,citecolor=blue,urlcolor=blue]{hyperref}
\usepackage{graphicx}
\newtheorem{thm}{Theorem}[section]
\newtheorem{prop}[thm]{Proposition}
\newtheorem{cor}[thm]{Corollary}
\theoremstyle{definition}
\newtheorem{definition}[thm]{Definition}
\theoremstyle{remark}
\newtheorem{remark}[thm]{Remark}

\title[Almost Hermitian Geometry and Generalized Ricci Flow]{Almost Hermitian Geometry and Generalized Ricci Flow}
\author{Anna Fino} \address[Anna Fino]{Dipartimento di Matematica ``G. Peano'', Universit\`{a} degli studi di Torino \\
Via Carlo Alberto 10\\
10123 Torino, Italy\\
\& Department of Mathematics and Statistics, Florida International University\\
Miami, FL 33199, United States}
\email{annamaria.fino@unito.it, afino@fiu.edu} \author{Julieth Saavedra} \address[Julieth Saavedra]{Universitat Internacional de Catalunya, Immaculada 22, Barcelona, Catalonia 08017}{}\email{jpsaavedra@uic.es}
\subjclass[2020]{53C15, 53E30, 53C55, 53D15}
\keywords{almost Hermitian geometry, characteristic connection, skew torsion, Bismut--Ricci flow, pluriclosed flow, generalized Ricci flow}

\begin{document}
\begin{abstract}  

We study almost Hermitian manifolds whose lowered Nijenhuis tensor is totally skew-symmetric and whose characteristic (Bismut) torsion is closed, a condition  known as
the almost-SKT condition. This is  a natural extension of the SKT (pluriclosed) condition beyond the integrable setting. For a fixed almost complex structure, these constraints are linear in the metric. 
We introduce an overdetermined ellipticity condition  for the non-integrable setting which, on compact
manifolds, ensures that
the space of admissible infinitesimal deformations is finite-dimensional,
and prove short-time existence for a constrained Bismut--Ricci flow.

In real dimension six, the almost-SKT condition yields  the following
dichotomy: either the almost complex structure is integrable, or the
skew Nijenhuis tensor is nowhere vanishing and has constant norm, and
the Bismut  Ricci form vanishes identically. Consequently,
on a compact non-integrable almost-SKT six-manifold the constrained
Bismut--Ricci flow is stationary. In real dimension eight, the
overdetermined ellipticity condition always fails, while in higher
dimensions, Nijenhuis rigidity holds on an open dense algebraic locus.

 In arbitrary dimension, we characterize when the requirement that the lowered Nijenhuis tensor be totally skew-symmetric determines the compatible metric pointwise up to scale. On a compact manifold, this algebraic rigidity, together with the closed-torsion condition, implies that the almost-SKT metric compatible with the fixed almost complex structure is unique up to constant scaling. In this case, the constrained Bismut–Ricci flow is an explicit homothetic solution whose behavior is
determined by a single constant.
 We also study the relation with generalized Ricci flow and characterize
the infinitesimal obstruction to lifting the constrained evolution to
generalized Ricci flow when the almost complex structure is allowed to vary.

 \sloppy{Finally, we construct non-integrable almost-SKT structures with nowhere vanishing Nijenhuis tensor and non-flat} characteristic connection on several compact six-manifolds, including $S^1\times S^2\times S^3$ and $S^3\times S^3$.  In these families, the Riemannian metric is fixed and the compatible almost complex structures form an $S^2$-family. By the six-dimensional dichotomy, these structures  have vanishing Bismut Ricci form and are therefore almost-CYT. 
\end{abstract}
\maketitle

\section{Introduction}
  Let $(M^{2n},J,g)$ be an almost Hermitian manifold of complex dimension $n$,
with fundamental form
$
\omega(\cdot,\cdot)=g(\cdot,J\cdot).$
The lowered Nijenhuis tensor is defined by
\[
\widetilde N_g(X,Y,Z)=g(N_J(X,Y),Z).
\]
When emphasizing the dependence of $g$ on $\omega$, we also write
$\widetilde N_g$ as $\widetilde N_\omega$. We assume that this tensor is a
$3$-form and refer to it as the skew Nijenhuis form. Almost Hermitian structures satisfying this condition form the
Gray--Hervella class
$
\mathcal G_1=\mathcal W_1\oplus\mathcal W_3\oplus\mathcal W_4
$ 
\cite{GrayHervella1980,FriedrichIvanov2002}. For such a  structure, there exists a unique Hermitian connection $\nabla^B$ with totally skew-symmetric torsion $T^B$\cite{FriedrichIvanov2002}, called the characteristic connection and also known as the Bismut connection \cite{Bismut1989, Gauduchon1997}. 

In the paper, we consider almost Hermitian structures for which $\widetilde N_g$ is a $3$-form  and   $dT^B=0.$ This is the almost-SKT condition used in \cite{FinoFowdar2026} and when $J$ is integrable, it reduces to the pluriclosed, or SKT, condition. Throughout, manifolds are assumed to be connected and without boundary
unless otherwise stated.

The purpose  is to study deformations of these structures and a natural Bismut--Ricci flow within this constrained class. The key observation is that when $J$ is fixed, both constraints are linear in the associated  fundamental form $\omega.$ The infinitesimal deformations satisfying both constraints form the kernel of a fixed linear second-order operator $L_J$, defined on the bundle $E_J$ of algebraically admissible Hermitian variations. At each $x\in M$, its principal symbol is
\[
\sigma_2(L_J)(x,\xi)\alpha
   =\xi\wedge J^*\xi\wedge\alpha,
   \qquad
   \xi\in T_x^*M,\quad \alpha\in E_{J,x}.
\]
We prove that if $M$ is compact and  this symbol is injective for every $x\in M$ and every nonzero $\xi\in T_x^*M$, a condition denoted by \emph{(OE)}, then the space of admissible deformations is finite-dimensional (Proposition \ref{prop:overdetermined-elliptic-final}). Consequently, the projected Bismut--Ricci flow reduces to an ordinary differential equation on this space.

Fix an almost-SKT  metric $g_0$ with associated fundamental form $\omega_0$, and set
$\mathcal V_J:=\ker L_J.$
We denote by $\mathbb P_J$ the $L^2(g_0)$-orthogonal projection onto $\mathcal V_J$ and by $\rho^B_\omega$ the first Ricci form of the characteristic connection associated with $\omega$. Our first main result establishes short-time existence and a continuation
criterion for equations of the form
$\partial_t\omega=\mathbb P_JF(\omega)$, where $F$ is a smooth
$(1,1)$-form-valued map on the admissible metrics  (Corollary \ref{cor:OE-short-time-final} and Proposition \ref{prop:finite-dimensional-continuation}). In particular, under the constant-rank assumption for the algebraic constraint and condition \emph{(OE)}, the constrained Bismut--Ricci equation
\[
\partial_t\omega
   =-\mathbb P_J\bigl((\rho^B_\omega)^{1,1}\bigr),
   \qquad \omega(0)=\omega_0,
\]
has a unique short-time solution preserving both almost-SKT constraints (Proposition \ref{thm:Bismut-short-time-final}). If its maximal forward existence time $T_{\max}$ is finite, then $\omega(t)$ leaves every compact subset of the positive cone
\[
(\omega_0+\mathcal V_J)\cap\{\omega>0\}
\]
as $t\nearrow T_{\max}$. In particular, a uniform two-sided bound
\[
c\,g_0\leq g(t)\leq C\,g_0
\]
guarantees continuation of the solution.

Our second main result concerns Nijenhuis rigidity. We say that $(J, \omega_0)$ is Nijenhuis-rigid if the algebraically admissible Hermitian variations are pointwise proportional to $\omega_0$, and characterize this condition by the orthogonal indecomposability of the complex $(0,3)$-part $\Psi$ of the skew Nijenhuis form (Proposition \ref{prop:algebraic-rigidity-criterion}). This yields a sharp dimensional picture: in complex dimension three every nonzero skew Nijenhuis form is rigid; in complex dimension four condition \emph{(OE)} always fails; and in complex dimension at least five orthogonally indecomposable complex three-forms constitute an open dense algebraic locus (Corollary \ref{cor:dimension-trichotomy}). On a compact manifold, pointwise rigidity implies that the space of positive $J$-compatible almost-SKT forms is the single ray
$$
\{\lambda\omega_0:\lambda>0\}$$ (Theorem \ref{thm:global-rigid-moduli}).
Consequently, the constrained Bismut--Ricci flow has the explicit homothetic solution
\[
\omega(t)=(1+\mu_0t)\omega_0,
\]
whose maximal existence interval and expanding, stationary, or shrinking behavior are determined by the sign of $\mu_0$ (Theorem \ref{thm:rigid-complete-dynamics}). If, in addition, $\omega_0$ is balanced, that is, $d(\omega_0^{n-1})=0$, then $\mu_0$ is determined cohomologically by pairing $c_1(TM,J)$ with $[\omega_0^{n-1}]$; in particular, the flow is stationary whenever $c_1(TM,J)=0$ in $H^2(M,\mathbb R)$  (Corollary \ref{cor:balanced-rigid-topological-slope}).

 In real dimension six, the condition $d T^B =0$ imposes a stronger
rigidity phenomenon. We prove that either $J$ is integrable, or the skew
Nijenhuis tensor is nowhere vanishing and has constant norm; in the latter
case the Bismut Ricci form vanishes identically
(Theorem~\ref{thr3.16-dimsix}). Consequently, every compact non-integrable almost-SKT
six-manifold is Nijenhuis-rigid and every solution of the constrained
Bismut--Ricci flow is stationary (Corollary~\ref{cor3.17-dimsix}).

 We next turn to the relation with generalized Ricci flow
\cite{GarciaFernandez2019,GarciaFernandezJordanStreets2021}.  
 The almost-Hermitian analogue of the Bismut--Hermitian--Einstein condition, i.e., $
dT^B=0, \rho^B=0,
$
implies the steady generalized Ricci soliton equations. In the compact setting, this implication is contained in \cite[Theorem~3.5]{KennonStreets}.
In the compact case, a co-closed Lee form yields a stationary
solution of generalized Ricci flow, generalizing
\cite[Lemma~2.9]{ABLS} to strong ABHE structures.
In real dimension six, structures with skew Nijenhuis tensor and $\rho^B=0$ are also called almost-CYT in \cite{FinoFowdar2026}. We also identify the precise obstruction to lifting the constrained Bismut--Ricci flow to generalized Ricci flow when the almost complex structure is allowed to vary, and prove that the associated fixed-metric deformation equation preserving the skew-Nijenhuis condition is overdetermined elliptic (Theorem \ref{thm:exact-GRF-lift-constrained} and Proposition \ref{prop:J-G1-overdetermined-elliptic}).

We next construct two families of compact six-dimensional non-integrable
examples. The first construction starts from a compact simply connected strong Sasaki-with-torsion $5$-manifold $M$, in the sense of \cite{BrienzaFinoFowdarGrantcharov2026}, with vanishing characteristic
Ricci form and satisfying a natural non-degeneracy condition. Its horizontal distribution carries a parallel quaternionic triple whose
first member is the original transverse complex structure. Quaternionic
rotation then produces an $S^2$-family of orthogonal almost complex
structures on $S^1\times M$ sharing the same characteristic connection
and closed torsion.
 Applying the first construction to the regular strong $\nabla$-Einstein
five-manifolds $L^{a,b,c}$ considered in
\cite{BrienzaFinoFowdarGrantcharov2026}, which are diffeomorphic to
$S^2\times S^3$, yields { non-Bismut-flat almost-SKT and almost-CYT
structures on the compact six-manifold} $S^1\times S^2\times S^3$ (Theorem \ref{thm:ST-quaternionic-rotation} and Corollary \ref{cor:Labc-quaternionic-examples}).
 The second construction starts from compact simply connected non-K\"ahler
quasi-regular Bismut--Hermitian--Einstein threefolds and produces analogous
families on several compact threefolds,
 
including
$S^3\times S^3$  (Theorem \ref{thm:BHE-threefold-quaternionic-rotation} and  Corollary \ref{cor:BHE-nonflat-families}).
Together, these examples answer the question
raised in \cite{FinoFowdar2026} concerning non-Bismut-flat almost-SKT and
almost-CYT six-manifolds with $N_J\neq0$.
 The stationarity of the compact six-dimensional examples constructed
here is not a feature of the particular constructions. Theorem~\ref{thr3.16-dimsix}
shows that every non-integrable almost-SKT six-manifold has
$\rho^B=0$, and Corollary~\ref{cor3.17-dimsix} therefore implies that, in the compact
case, every solution of the constrained Bismut--Ricci flow is stationary.
Thus genuinely non-stationary non-integrable examples, if they exist in
the overdetermined-elliptic regime, must occur in higher dimension.

 Other curvature flows on almost Hermitian manifolds have also been studied. In particular, in \cite{Vezzoni2011}  the Hermitian curvature flow \cite{StreetsTian2011} was extended  to the almost complex setting, using the Chern connection and its torsion. A different almost-Hermitian extension of the pluriclosed flow was introduced in \cite{Kawamura2017,Kawamura2019}.
It evolves arbitrary, almost Hermitian metrics by a strictly parabolic equation built from the Chern curvature. By contrast, our flow is restricted to the characteristic-torsion locus, is driven by the characteristic Ricci form, and, under condition \emph{(OE)}, reduces to a finite-dimensional ordinary differential equation. When $J$ is integrable, both Kawamura's flow and the unprojected equation underlying our construction agree with the classical pluriclosed flow \cite{StreetsTian2010},
\[
\partial_t\omega=-(\rho^B_\omega)^{1,1}.
\]
Thus, our framework exhibits two distinct regimes: a finite-dimensional ODE on the non-integrable locus satisfying \emph{(OE)}, and the infinite-dimensional parabolic flow in the integrable case.

The paper is organized as follows. Section 2 introduces the constraint operator $L_J$ and condition \emph{(OE)}, and proves short-time existence for the constrained Bismut--Ricci flow.  Section 3 characterizes Nijenhuis rigidity, describes the resulting
dimensional phenomena, and proves the six-dimensional dichotomy and
stationarity results.
Section 4 relates strong almost Bismut--Hermitian--Einstein structures to generalized Ricci solitons. Section 5 develops the examples and quaternionic-rotation constructions. Section 6 studies the obstruction to lifting the constrained flow to generalized Ricci flow with moving $J$. Finally, Section 7 discusses further questions.
%%%%%%%%%%%%%

\section{A constrained Bismut--Ricci flow}
\label{sec:constrained-almost-SKT-flow}

Throughout this section,             $M^{2n}$ is compact, the almost complex
structure $J$ is fixed, and  $\omega$ denotes the fundamental form of $(J,g)$.
  In particular, for fixed $J$ and
$\alpha=\partial_t\omega$ the corresponding metric variation is
$h=\partial_tg$ with
$h(X,Y)=-\alpha(X,JY)$. 
We use the sign conventions for $d^c$ and $N_J$ for which the torsion of the
characteristic (Bismut) connection is
\begin{equation}\label{eq:Bismut-torsion-convention}
T^B_\omega=d^c\omega+\widetilde N_\omega,
\qquad
\widetilde N_\omega(X,Y,Z)=g_\omega(N_J(X,Y),Z),
\end{equation}
(so $\widetilde N_\omega=\widetilde N_g$ in the notation of the
introduction, once $g=g_\omega$).
The characteristic Hermitian connection with totally skew-symmetric torsion (also called the Bismut connection)
exists precisely when $\widetilde N_\omega$ is a  $3$-form; see
\cite{FriedrichIvanov2002}.  We denote by
\[
\rho^B_\omega(X,Y)
=
\frac12 R^B_\omega(X,Y,e_a,Je_a)
\]
the first Ricci form of the characteristic connection, where
$\{e_a\}$ is a local $g_\omega$-orthonormal frame.

We consider the two constraints
\begin{equation}\label{eq:almost-SKT-constraint}
\widetilde N_\omega\in\Omega^3(M),
\qquad
dT^B_\omega=0.
\end{equation}
When $J$ is integrable, $N_J=0$ and the second condition is the usual
pluriclosed condition.

\subsection{The two linear constraints}

Let $S_J^2T^*M$ be the bundle of $J$-invariant symmetric two-tensors.  For
$h\in\Gamma(S_J^2T^*M)$ define
\begin{equation}\label{eq:B-N-definition-final}
\mathcal B_{N_J}(h)(X,Y,Z)
:=h(N_J(X,Y),Z)+h(N_J(X,Z),Y).
\end{equation}
If $\widetilde N_\omega$ is skew-symmetric and $\partial_tg=h$, then
\[
\frac{d}{dt}\left[
\widetilde N_\omega(X,Y,Z)+\widetilde N_\omega(X,Z,Y)
\right]
=\mathcal B_{N_J}(h)(X,Y,Z).
\]
Thus the infinitesimal Hermitian variations preserving the first constraint
are exactly $\ker\mathcal B_{N_J}$.

We use the fixed identification
\begin{equation}\label{eq:I-final}
\mathcal I:S_J^2T^*M\longrightarrow\Lambda_J^{1,1}T^*M,
\qquad
\mathcal I(h)(X,Y)=h(X,JY),
\end{equation}
whose inverse is $\mathcal I^{-1}(\alpha)(X,Y)=-\alpha(X,JY)$.  Assume for the
moment that $\mathcal B_{N_J}$ has constant pointwise rank and set
\begin{equation}\label{eq:EJ-final}
\mathcal E_J
:=\mathcal I(\ker\mathcal B_{N_J})
\subset\Lambda_J^{1,1}T^*M.
\end{equation}
Then $\mathcal E_J$ is a smooth vector subbundle.  Notice that
$\omega_0\in\Gamma(\mathcal E_J)$ whenever
$\widetilde N_{\omega_0}$ is a $3$-form.

Let $\omega(t)$ be a Hermitian variation and put
$\alpha=\partial_t\omega$.  Since $J$, and hence $N_J$, is fixed,
\begin{align*}
\partial_t\widetilde N_\omega(X,Y,Z)
&=\mathcal I^{-1}(\alpha)(N_J(X,Y),Z)
=-\alpha(N_J(X,Y),JZ)\\
&=\alpha(JN_J(X,Y),Z).
\end{align*}
Define the zeroth-order operator
\begin{equation}\label{eq:Ncal-final}
\mathcal N_J(\alpha)(X,Y,Z)
:=\alpha(JN_J(X,Y),Z).
\end{equation}
Then
\begin{equation}\label{eq:variation-T-final}
\partial_tT^B_\omega=d^c\alpha+\mathcal N_J(\alpha),
\end{equation}
and therefore
 \begin{equation}\label{eq:variation-dT-final}
\partial_t(dT^B_\omega)=\mathscr D_J\alpha,
\qquad
\mathscr D_J\alpha
:=d\bigl(d^c\alpha+\mathcal N_J(\alpha)\bigr)
=dd^c\alpha+d\mathcal N_J(\alpha).
\end{equation}
Because $J$ is fixed, $\mathscr D_J$ is a fixed linear differential operator
of order two.  We define
\begin{equation}\label{eq:LJ-final}
L_J:=\mathscr D_J\big|_{\Gamma(\mathcal E_J)},
\qquad
\mathscr V_J:=\ker L_J.
\end{equation}
The space $\mathscr V_J$ is precisely the space of infinitesimal variations
which preserve both constraints.

\begin{prop}
\label{prop:constraint-preservation-final}
Let $\omega(t)$ be a smooth family of positive real $(1,1)$-forms with fixed
$J$ and assume
$$\partial_t\omega(t)\in\mathscr V_J$$
for every $t$.  
If $\omega(0)$ satisfies \eqref{eq:almost-SKT-constraint}, then
\[
\widetilde N_{\omega(t)}\in\Omega^3(M),
\qquad
dT^B_{\omega(t)}=0
\]
for every time for which the family exists.
\end{prop}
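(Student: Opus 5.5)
The approach is to show that both defining quantities are constant in $t$ along the family, by integrating their time derivatives. Everything rests on the fact that, with $J$ fixed, the two constraints are linear in $\omega$. Write $\alpha(t)=\partial_t\omega(t)$. By hypothesis $\alpha(t)\in\mathscr V_J\subset\Gamma(\mathcal E_J)$, so $\mathcal I^{-1}(\alpha(t))\in\ker\mathcal B_{N_J}$ and $\mathscr D_J\alpha(t)=0$ for every $t$.

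\emph{First constraint.} Set $S_t(X,Y,Z):=\widetilde N_{\omega(t)}(X,Y,Z)+\widetilde N_{\omega(t)}(X,Z,Y)$. Then $S_t$ is linear in $g_{\omega(t)}$, because $\widetilde N_\omega(X,Y,Z)=g_\omega(N_J(X,Y),Z)$ and $N_J$ does not depend on $t$. Differentiating gives
\[
\partial_tS_t=\mathcal B_{N_J}(h_t),\qquad h_t=\partial_tg=\mathcal I^{-1}(\alpha(t)).
\]
This identity holds for any Hermitian family and does not need skew-symmetry at time $t$; the paper's remark assumed skew-symmetry only to motivate the formula. Since $h_t\in\ker\mathcal B_{N_J}$, we get $\partial_tS_t=0$, so $S_t=S_0$. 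The hypothesis on $\omega(0)$ gives $S_0=0$, hence $S_t=0$. Antisymmetry of $N_J$ already gives skewness in the first two slots, and skewness in the last two slots then shows $\widetilde N_{\omega(t)}$ is totally skew, i.e.\ a $3$-form.

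\emph{Second constraint.} Now that $\widetilde N_{\omega(t)}$ is a $3$-form for every $t$, the characteristic connection exists along the whole family. Its torsion $T^B_{\omega(t)}=d^c\omega(t)+\widetilde N_{\omega(t)}$ is given by \eqref{eq:Bismut-torsion-convention}. Equation \eqref{eq:variation-T-final} then gives $\partial_tT^B=d^c\alpha+\mathcal N_J(\alpha)$, since both terms are linear in $\omega$ with $J$ fixed. Applying $d$, which commutes with $\partial_t$, yields
\[
\partial_t\,dT^B_{\omega(t)}=\mathscr D_J\alpha(t)=0
\]
by \eqref{eq:variation-dT-final}. Hence $dT^B_{\omega(t)}=dT^B_{\omega(0)}=0$.

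The only point needing care is the order of the two steps. The torsion formula, and with it the second constraint, only makes sense once the first constraint is known at time $t$, so the first must be established before the second. Smoothness of the family in $t$ justifies differentiating under $d$ and integrating in time pointwise on $M$, for every $t$ in the interval of existence.
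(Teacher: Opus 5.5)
Your proposal is correct and follows the same route as the paper. You differentiate the skew-symmetry defect, which $\mathcal B_{N_J}$ annihilates on $\mathcal I^{-1}(\mathscr V_J)$, and you differentiate $dT^B$, which $L_J$ annihilates, so both defects keep their initial value zero; your added points (that $\partial_tS_t=\mathcal B_{N_J}(h_t)$ holds by linearity in $g$ without assuming skewness at time $t$, and that the first constraint is settled before the second) just spell out what the paper's two-line proof leaves implicit.
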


\begin{proof}
The first assertion follows by differentiating the skew-symmetry defect and
using $\partial_t\omega\in\Gamma(\mathcal E_J)$.  The second follows from
\eqref{eq:variation-dT-final}:
\[
\partial_t(dT^B_\omega)=L_J(\partial_t\omega)=0.
\]
Both defects therefore remain equal to their initial value, which is zero.
\end{proof}

In fact, the same argument shows something slightly stronger.  Since $J$ is
fixed, both constraint maps are linear in $\omega$.  Hence every positive form
in the affine space
\begin{equation}\label{eq:affine-constraint-space}
\omega_0+\mathscr V_J
\end{equation}
automatically satisfies \eqref{eq:almost-SKT-constraint} whenever $\omega_0$
does.

\subsection{The principal symbol and an overdetermined ellipticity criterion}

The Nijenhuis term in \eqref{eq:variation-dT-final} has order at most one after
applying $d$.  Thus the second-order principal symbol of $L_J$ is, up to the
irrelevant sign determined by the convention for $d^c$,
\begin{equation}\label{eq:symbol-LJ-final}
\sigma_2(L_J)(x,\xi)\alpha
=
\xi\wedge J^*\xi\wedge\alpha,
\qquad
\alpha\in\mathcal E_{J,x}.
\end{equation}

We shall use the following pointwise condition.

\medskip

 \begin{definition}
The pair \((J,\omega_0)\) is said to satisfy the
\emph{overdetermined ellipticity condition} \emph{(OE)} if, for every
\(x\in M\) and every \(0\neq\xi\in T_x^*M\),
\begin{equation}\label{eq:OE-final}
\alpha\in\mathcal E_{J,x},
\qquad
\xi\wedge J^*\xi\wedge\alpha=0
\quad\Longrightarrow\quad
\alpha=0.
\end{equation}
\end{definition}

 Let
$L_J^{*,0}\colon\Omega^4(M)\longrightarrow\Gamma(E_J)$
denote the formal $L^2(g_0)$-adjoint of $L_J$.
\begin{prop}[Finite dimensionality of the admissible space]
\label{prop:overdetermined-elliptic-final}
Assume that $\mathcal E_J$ is a smooth vector bundle and that \emph{(OE)}
holds.  Then $L_J$ is overdetermined elliptic.  Consequently
$L_J^{*,0}L_J$ is an elliptic operator of order four on $\mathcal E_J$,
$\mathscr V_J=\ker L_J$ is finite dimensional and consists of smooth forms,
and the $L^2(g_0)$-orthogonal projection
\[
\mathbb P_J:L^2(\Lambda_J^{1,1})\longrightarrow\mathscr V_J
\]
is a finite-rank smoothing operator.
\end{prop}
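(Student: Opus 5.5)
The argument is the standard reduction of an overdetermined elliptic system to a genuinely elliptic one. We view $L_J$ as a second-order operator $\Gamma(\mathcal E_J)\to\Omega^4(M)$ between smooth vector bundles. Here $\mathcal E_J$ is a subbundle of $\Lambda^{1,1}_J$ because of the constant-rank assumption, and we give it the metric induced by $g_0$. A local frame of $\mathcal E_J$ writes $L_J$ as a matrix of differential operators whose top-order part is the restriction of $dd^c$. The term $d\mathcal N_J(\alpha)$ is first order, and the derivatives of the frame only produce lower-order terms. Hence $\sigma_2(L_J)(x,\xi)$ is the map $\alpha\mapsto\xi\wedge J^*\xi\wedge\alpha$ of \eqref{eq:symbol-LJ-final}, up to a nonzero constant. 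Condition (OE) says precisely that this symbol is injective for every $\xi\neq0$, which is the definition of $L_J$ being overdetermined elliptic.

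Next we form $P:=L_J^{*,0}L_J$, a fourth-order operator on $\Gamma(\mathcal E_J)$. Its principal symbol is
$$\sigma_4(P)(x,\xi)=\sigma_2(L_J)(x,\xi)^{*}\sigma_2(L_J)(x,\xi),$$
with the adjoint taken with respect to $g_0$. The symbol of a formal adjoint is the pointwise adjoint of the symbol, and since the order is even the sign factor $(-1)^2$ is harmless. This composite is self-adjoint and positive semidefinite, and
$$\langle\sigma_4(P)\alpha,\alpha\rangle=|\xi\wedge J^*\xi\wedge\alpha|^2 .$$
So it is positive definite exactly when $\sigma_2(L_J)(x,\xi)$ is injective, and (OE) makes $P$ elliptic. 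On the compact manifold $M$, standard elliptic theory then says $P$ is Fredholm on Sobolev spaces, $\ker P$ is finite dimensional, and every element of $\ker P$ in any Sobolev or distributional space is smooth, by elliptic regularity.

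We then identify the kernels. Clearly $\ker L_J\subset\ker P$. Conversely, if $P\alpha=0$ with $\alpha$ smooth, integration by parts on the closed manifold gives $\|L_J\alpha\|^2_{L^2}=\langle P\alpha,\alpha\rangle=0$. Therefore $\mathscr V_J=\ker L_J=\ker P$ is finite dimensional and consists of smooth forms. The same holds for the weak kernel in $L^2$, since any distributional solution of $L_J\alpha=0$ also solves $P\alpha=0$ and is therefore smooth.

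Finally, we describe the projection. Choose an $L^2(g_0)$-orthonormal basis $\phi_1,\dots,\phi_k$ of $\mathscr V_J$, each smooth. Then
$$\mathbb P_J\beta=\sum_i\langle\beta,\phi_i\rangle_{L^2}\,\phi_i$$
for $\beta\in L^2(\Lambda^{1,1}_J)$. The pairing uses the pointwise inner product on $\Lambda^{1,1}_J$, and since $\phi_i\in\mathcal E_J$ this amounts to first projecting pointwise onto $\mathcal E_J$. The Schwartz kernel of $\mathbb P_J$ is $\sum_i\phi_i(x)\otimes\phi_i(y)^*$, which is smooth, so $\mathbb P_J$ is a finite-rank smoothing operator.

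No step is a serious obstacle. The only point needing care is the symbol bookkeeping on the subbundle: checking that restricting to $\mathcal E_J$ and the Nijenhuis term do not alter the top-order symbol. This is where the constant-rank hypothesis enters, since it gives smooth local frames of $\mathcal E_J$.
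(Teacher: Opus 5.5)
Your proposal is correct and follows the same route as the paper. In both, (OE) gives injectivity of $\sigma_2(L_J)$, so $L_J^{*,0}L_J$ is elliptic with symbol $\sigma_2(L_J)^{*}\sigma_2(L_J)$; its kernel equals $\ker L_J$, and projection onto a finite-dimensional space of smooth sections is smoothing. Your extra details (local frames of $\mathcal E_J$, the lower-order Nijenhuis term, the integration-by-parts kernel identity and the explicit kernel $\sum_i\phi_i(x)\otimes\phi_i(y)^*$) fill in steps the paper leaves implicit.
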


\begin{proof}
By \eqref{eq:symbol-LJ-final} and \emph{(OE)}, the symbol of $L_J$ is injective
for every nonzero covector.  Hence
\[
\sigma_4(L_J^{*,0}L_J)(x,\xi)
=
\sigma_2(L_J)(x,\xi)^{*,0}\sigma_2(L_J)(x,\xi)
\]
is positive definite on $\mathcal E_{J,x}$ for $\xi\neq0$.  Thus
$L_J^{*,0}L_J$ is elliptic.  Standard elliptic theory on the compact manifold
$M$ gives finite dimensionality and smoothness of its kernel.  Since
\[
\ker(L_J^{*,0}L_J)=\ker L_J,
\]
the same holds for $\mathscr V_J$.  Orthogonal projection onto a
finite-dimensional space of smooth sections has a smooth integral kernel and
is therefore smoothing.
\end{proof}

 Note that \emph{(OE)} fails at every point where $\widetilde N$ vanishes
(Proposition~\ref{prop:N-kernel-obstructs-OE}); in particular it never holds for
integrable $J$, in accordance with the infinite-dimensionality of the
pluriclosed variational space (Remark~\ref{rem:pluriclosed-limit}).

\subsection{Existence and continuation for projected evolution equations}

The previous proposition allows one to avoid a nonlocal parabolic projection
argument altogether.  Once $\mathscr V_J$ is finite dimensional, the
constrained flow is an ordinary differential equation on the finite-dimensional
manifold of admissible metrics.

Let $\mathcal U\subset\omega_0+\mathscr V_J$ be the open subset consisting of
positive forms and
$$
\mathcal F:\mathcal U\longrightarrow\Omega_J^{1,1}(M,\mathbb R)
$$
be a smooth map.

\begin{prop}[General short-time existence]
\label{thm:general-short-time-final}
Assume that $\omega_0$ satisfies \eqref{eq:almost-SKT-constraint} and that
$\mathscr V_J$ is finite dimensional and consists of smooth forms.  Then the
projected equation
\begin{equation}\label{eq:general-projected-flow-final}
\partial_t\omega
=
\mathbb P_J\mathcal F(\omega),
\qquad
\omega(0)=\omega_0
\end{equation}
has a unique smooth solution on some interval $[0,\varepsilon)$.  After
shrinking $\varepsilon$ if necessary, $\omega(t)$ remains positive and
satisfies both constraints in \eqref{eq:almost-SKT-constraint}.
\end{prop}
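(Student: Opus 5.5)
The plan is to reduce \eqref{eq:general-projected-flow-final} to an ordinary differential equation on a finite-dimensional vector space and then apply Picard--Lindel\"of. Set $m=\dim\mathscr V_J$ and fix a basis $\beta_1,\dots,\beta_m$ of $\mathscr V_J$ consisting of smooth forms. We may choose it $L^2(g_0)$-orthonormal, which is possible since $\mathscr V_J$ is a finite-dimensional subspace of $L^2$ consisting of smooth forms. Every element of the affine space $\omega_0+\mathscr V_J$ is then $\omega_c=\omega_0+\sum_i c_i\beta_i$ with $c\in\mathbb R^m$. Positivity of $\omega_c$ is an open condition in $c$: by compactness of $M$ and continuity of the $\beta_i$, it is preserved under small perturbations of $c$ in the $C^0$ sense. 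Hence $U:=\{c:\omega_c>0\}$ is an open neighbourhood of $0$ in $\mathbb R^m$, and $c\mapsto\omega_c$ identifies $U$ with $\mathcal U$.

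Next, write the projection explicitly as $\mathbb P_J\eta=\sum_i\langle\eta,\beta_i\rangle_{L^2(g_0)}\beta_i$. The equation, with the ansatz $\omega(t)=\omega_{c(t)}$, becomes
\[
\dot c_i=\Phi_i(c):=\bigl\langle\mathcal F(\omega_c),\beta_i\bigr\rangle_{L^2(g_0)},\qquad c(0)=0.
\]
The key step is to check that $\Phi:U\to\mathbb R^m$ is smooth, or at least locally Lipschitz. This is where the meaning of ``smooth map'' $\mathcal F$ enters. I would interpret smoothness on the finite-dimensional manifold $\mathcal U$ as follows: in the cases of interest, for instance $\mathcal F(\omega)=-(\rho^B_\omega)^{1,1}$, the form $\mathcal F(\omega_c)$ is a universal differential expression in $g_c$ and $J$. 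It is therefore jointly smooth in $(x,c)$, because $g_c^{-1}$ depends smoothly on $c$ on $U$. Integrating against the smooth $\beta_i$ over compact $M$ then gives $\Phi\in C^\infty(U)$ by differentiation under the integral. I expect this regularity point to be the only genuine obstacle, and it is mostly a matter of stating the hypothesis correctly.

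Picard--Lindel\"of then gives a unique $C^1$ (indeed smooth) solution $c(t)$ on some interval $[0,\varepsilon)$ with $c(t)\in U$, after shrinking $\varepsilon$ so that $c$ stays in a compact neighbourhood of $0$ inside $U$. Then $\omega(t)=\omega_{c(t)}$ is positive and smooth in $(t,x)$.

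Uniqueness needs one extra remark. Any solution $\omega(t)$ of \eqref{eq:general-projected-flow-final} satisfies $\omega(t)-\omega_0=\int_0^t\mathbb P_J\mathcal F\in\mathscr V_J$, so it must be of the form $\omega_{c(t)}$. Uniqueness therefore reduces to ODE uniqueness. Finally, $\partial_t\omega=\sum_i\dot c_i\beta_i\in\mathscr V_J$, so Proposition~\ref{prop:constraint-preservation-final}, or equivalently the remark on the affine space \eqref{eq:affine-constraint-space}, shows that both constraints \eqref{eq:almost-SKT-constraint} hold for all $t\in[0,\varepsilon)$.
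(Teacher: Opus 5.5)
Your proposal is correct and follows the paper's proof: an $L^2(g_0)$-orthonormal basis of $\mathscr V_J$, reduction to the finite ODE $\dot c_a=\langle\mathcal F(\omega(c)),\phi_a\rangle_{L^2(g_0)}$, standard ODE existence and uniqueness, openness of positivity, and Proposition~\ref{prop:constraint-preservation-final} for the constraints. Your additional remarks, that any solution automatically stays in $\omega_0+\mathscr V_J$ and that $\Phi$ is smooth by differentiating under the integral, only make explicit points the paper takes for granted.
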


\begin{proof}
Choose an $L^2(g_0)$-orthonormal basis
$\{\phi_1,\ldots,\phi_m\}$ of $\mathscr V_J$.  Every element of $\mathcal U$
near $\omega_0$ can be written uniquely as
\[
\omega(c)=\omega_0+\sum_{a=1}^m c_a\phi_a.
\]
Equation \eqref{eq:general-projected-flow-final} is equivalent to the finite
system
\begin{equation}\label{eq:finite-ODE-final}
\dot c_a
=
\left\langle
\mathcal F(\omega(c)),\phi_a
\right\rangle_{L^2(g_0)},
\qquad
a=1,\ldots,m.
\end{equation}
The right-hand side is a smooth function of $c$ on the open set where
$\omega(c)$ is positive.  The ordinary differential equation theorem gives a
unique solution for short time.  Positivity is open, so it persists after
possibly reducing the time interval.  Finally
$\partial_t\omega\in\mathscr V_J$ by construction, and
Proposition~\ref{prop:constraint-preservation-final} gives preservation of the
two constraints.
\end{proof}

Combining  Proposition~\ref{thm:general-short-time-final}  with
Proposition~\ref{prop:overdetermined-elliptic-final} yields the following
 criterion.

\begin{cor}[Short-time existence under \emph{(OE)}]
\label{cor:OE-short-time-final}
Assume that $\mathcal B_{N_J}$ has constant pointwise rank and that
\emph{(OE)} holds. Then  \eqref{eq:general-projected-flow-final} has a unique smooth
short-time solution  for every smooth map $\mathcal F$ as above, defined on
the space of admissible metrics. 
\end{cor}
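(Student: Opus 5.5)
The plan is to chain Proposition~\ref{prop:overdetermined-elliptic-final} into Proposition~\ref{thm:general-short-time-final}: the only real work is checking that the hypotheses of the corollary supply those of the two propositions. We also read ``unique short-time solution'' as including preservation of positivity and of both constraints, exactly as in Proposition~\ref{thm:general-short-time-final}.

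First we produce the bundle $\mathcal E_J$. Since $\mathcal B_{N_J}\colon S^2_JT^*M\to T^*M\otimes S^2T^*M$ is a smooth bundle map of constant pointwise rank, $\ker\mathcal B_{N_J}$ is a smooth vector subbundle of $S^2_JT^*M$. This is the standard fact that a constant-rank bundle morphism has a smooth kernel, proved locally via a smooth frame and the implicit function theorem. The identification $\mathcal I$ is a fixed smooth bundle isomorphism, so $\mathcal E_J=\mathcal I(\ker\mathcal B_{N_J})$ is a smooth subbundle of $\Lambda^{1,1}_J$. Moreover $\omega_0\in\Gamma(\mathcal E_J)$, because $\widetilde N_{\omega_0}$ is a $3$-form by \eqref{eq:almost-SKT-constraint}.

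Next we apply Proposition~\ref{prop:overdetermined-elliptic-final}. Its hypotheses are now exactly that $\mathcal E_J$ is a smooth bundle and that (OE) holds. It yields that $\mathscr V_J=\ker L_J$ is finite dimensional and consists of smooth forms. It also shows that $\mathbb P_J$ is well defined: in particular $\mathbb P_J\mathcal F(\omega)$ makes sense and is smooth for smooth $\mathcal F(\omega)\in\Omega^{1,1}_J$.

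Finally we apply Proposition~\ref{thm:general-short-time-final}. Its hypotheses are that $\omega_0$ satisfies \eqref{eq:almost-SKT-constraint} and that $\mathscr V_J$ is finite dimensional with smooth elements, and both now hold. The space of admissible metrics is then the open set $\mathcal U\subset\omega_0+\mathscr V_J$, a finite-dimensional affine manifold. Smoothness of $\mathcal F$ on $\mathcal U$ means smoothness in the coordinates $c$, so the coefficients $\langle\mathcal F(\omega(c)),\phi_a\rangle_{L^2(g_0)}$ of the ODE \eqref{eq:finite-ODE-final} are smooth and Picard--Lindel\"of applies.

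The proposition then gives existence and uniqueness on some $[0,\varepsilon)$, together with positivity and preservation of both constraints via Proposition~\ref{prop:constraint-preservation-final}. Uniqueness is understood among solutions with $\partial_t\omega\in\mathscr V_J$. Any solution of \eqref{eq:general-projected-flow-final} has this property automatically, since its right-hand side lies in the image of $\mathbb P_J$.

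The only mildly delicate step is the first one, namely that constant rank gives a smooth subbundle. Everything else is bookkeeping.
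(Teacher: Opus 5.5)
Your proposal is correct and follows the paper, which obtains the corollary simply by combining Proposition~\ref{prop:overdetermined-elliptic-final} with Proposition~\ref{thm:general-short-time-final}. Your extra check that constant rank of $\mathcal B_{N_J}$ makes $\mathcal E_J$ a smooth subbundle containing $\omega_0$ is the same observation the paper makes when it defines $\mathcal E_J$.
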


\begin{prop}[Maximal continuation criterion]\label{prop:finite-dimensional-continuation}
Assume that $\mathscr V_J$ is finite dimensional and consists of smooth forms,
and let $\omega(t)$ be the maximal solution of
\eqref{eq:general-projected-flow-final} on $[0,T_{\max})$.  If
$T_{\max}<\infty$, then $\omega(t)$ leaves every compact subset of the positive
cone
\[
\mathcal U=(\omega_0+\mathscr V_J)\cap\{\omega>0\}
\]
as $t\nearrow T_{\max}$.  In particular, if for some constants
$0<c<C<\infty$ one has
\[
c\,g_0\le g(t)\le C\,g_0
\qquad\text{on }[0,T_{\max}),
\]
then the solution extends past $T_{\max}$.
\end{prop}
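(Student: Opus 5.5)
The plan is to reduce everything to the standard continuation theorem for autonomous ODEs on an open subset of $\mathbb R^m$, using the coordinates from the proof of Proposition~\ref{thm:general-short-time-final}. Fix an $L^2(g_0)$-orthonormal basis $\{\phi_1,\dots,\phi_m\}$ of $\mathscr V_J$. The affine map $c\mapsto\omega(c)=\omega_0+\sum_a c_a\phi_a$ is a linear isomorphism $\mathbb R^m\to\omega_0+\mathscr V_J$. Since the $\phi_a$ are smooth and $\mathscr V_J$ is finite dimensional, all norms on this space are equivalent, in particular the $C^k$ norms and the Euclidean norm of $c$. Under this isomorphism $\mathcal U$ corresponds to the set $\Omega:=\{c:\omega(c)>0 \text{ on } M\}$. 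This set is open, because $M$ is compact and positivity of a continuous family of forms depending continuously on $c$ in $C^0$ is an open condition. Equation \eqref{eq:general-projected-flow-final} becomes the autonomous system \eqref{eq:finite-ODE-final}, $\dot c=G(c)$, with $G$ smooth, hence locally Lipschitz, on $\Omega$.

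The first step is the escape-from-compacts statement. I would invoke the classical result: a maximal solution of a locally Lipschitz autonomous ODE on an open set $\Omega\subset\mathbb R^m$ with $T_{\max}<\infty$ eventually leaves every compact $K\subset\Omega$. For completeness, the argument is as follows. Suppose $c(t_j)\in K$ for some sequence $t_j\nearrow T_{\max}$. Choose $\delta>0$ so that the closed $\delta$-neighbourhood $K_\delta$ lies in $\Omega$, and let $A=\sup_{K_\delta}|G|$. Uniform local existence on $K_\delta$ gives a common existence time $\tau>0$, depending only on $\delta$ and $A$, for every initial point in $K$. Starting at $c(t_j)$ with $T_{\max}-t_j<\tau$, uniqueness then extends the solution past $T_{\max}$, which is a contradiction. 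Transporting back through the linear isomorphism gives the claim for $\omega(t)$ in $\mathcal U$.

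The second step deduces the two-sided bound criterion. Assume $c\,g_0\le g(t)\le C\,g_0$ on $[0,T_{\max})$ and set
\[
K:=\{\omega\in\omega_0+\mathscr V_J:\ c\,g_0\le g_\omega\le C\,g_0\}.
\]
Here $g_\omega=-\omega(\cdot,J\cdot)$ depends linearly on $\omega$, and the defining inequalities are closed conditions. Moreover $K\subset\{\omega\ge c\,\omega_0\}\subset\mathcal U$, since $c>0$. For boundedness, $g_\omega\le C g_0$ together with $g_\omega>0$ bounds $|\omega|_{g_0}\le C'$ pointwise, so $\|\omega-\omega_0\|_{L^2(g_0)}$ is bounded. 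By orthonormality this is exactly the Euclidean norm of the coordinates $c_a$. Hence $K$ is a closed bounded subset of a finite-dimensional affine space lying inside $\mathcal U$, so it is compact. The solution never leaves $K$, so the first step forces $T_{\max}=\infty$; in other words, the solution extends.

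The only point needing care is the compactness of $K$ inside the open set $\mathcal U$. This is where the lower bound $c>0$ is essential: without it, $K$ could touch the boundary of the positive cone. Finite dimensionality is what turns pointwise bounds into compactness. I expect no genuine obstacle beyond checking these norm equivalences.
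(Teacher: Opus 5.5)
Your proof is correct and follows the same route as the paper. You pass to the coefficient coordinates, invoke the escape-from-compacts continuation theorem for autonomous ODEs, and use the two-sided bound together with finite-dimensionality to confine the trajectory to a compact subset $K\subset\mathcal U$; where the paper states this in one line, you supply the details that $K$ is closed, bounded via $L^2(g_0)$-orthonormality, and contained in $\mathcal U$ because $c>0$.
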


\begin{proof}
In the coordinates $c=(c_1,\ldots,c_m)$ used in
\eqref{eq:finite-ODE-final}, the flow is a smooth autonomous ODE on the open
set corresponding to $\mathcal U$.  The standard continuation theorem for
ordinary differential equations says that a finite maximal time can occur
only if the trajectory leaves every compact subset of that open set.  Since
all norms are equivalent on the finite-dimensional space $\mathscr V_J$, the
uniform two-sided metric bound places the coefficient vector $c(t)$ in a
bounded set whose closure stays inside the positive cone.  Hence the
trajectory is contained in a compact subset of $\mathcal U$ and extends.
\end{proof}

\begin{remark}
Thus, once the constraint is overdetermined elliptic, there is no independent
higher-derivative blow-up mechanism: all spatial derivatives of $\omega(t)$
are controlled by the finitely many coefficients of $\omega(t)-\omega_0$.
A finite-time singularity must therefore be accompanied by degeneration or
unboundedness of the metric inside the finite-dimensional admissible cone.
\end{remark}

\subsection{The constrained Bismut--Ricci flow}

For an admissible metric the characteristic connection exists, hence its
Bismut--Ricci form $\rho^B_\omega$ is defined.  We take
\[
\mathcal F_B(\omega):=-(\rho^B_\omega)^{1,1}
\]
and define
\begin{equation}\label{eq:Bismut-Ricci-constrained-final}
\partial_t\omega
=
-\mathbb P_J\bigl((\rho^B_\omega)^{1,1}\bigr),
\qquad
\omega(0)=\omega_0.
\end{equation}

\begin{prop}[Short-time existence for the constrained Bismut--Ricci flow]
\label{thm:Bismut-short-time-final}
Let $(M^{2n},J,\omega_0)$ be compact and assume that
\eqref{eq:almost-SKT-constraint} holds.  Suppose that
$\mathcal B_{N_J}$ has constant pointwise rank and that \emph{(OE)} holds.
Then there is $\varepsilon>0$ for which
\eqref{eq:Bismut-Ricci-constrained-final} has a unique smooth solution on
$[0,\varepsilon)$.  Along this solution
\[
\widetilde N_{\omega(t)}\in\Omega^3(M),
\qquad
dT^B_{\omega(t)}=0.
\]
\end{prop}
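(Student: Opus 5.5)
The plan is to reduce the statement to Corollary~\ref{cor:OE-short-time-final}, applied with $\mathcal F=\mathcal F_B=-(\rho^B_\omega)^{1,1}$. Only one thing has to be checked beyond the hypotheses of that corollary: $\mathcal F_B$ is a well-defined smooth map from the open set $\mathcal U\subset\omega_0+\mathscr V_J$ of positive admissible forms to $\Omega^{1,1}_J(M,\mathbb R)$. Once this holds, the constant rank of $\mathcal B_{N_J}$ makes $\mathcal E_J$ a smooth subbundle. By Proposition~\ref{prop:overdetermined-elliptic-final}, condition (OE) then makes $\mathscr V_J$ finite dimensional and spanned by smooth forms. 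Proposition~\ref{thm:general-short-time-final} then gives the unique solution on some $[0,\varepsilon)$, and Proposition~\ref{prop:constraint-preservation-final} gives the preservation of both constraints.

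The first step is well-definedness. By the remark after Proposition~\ref{prop:constraint-preservation-final}, every positive $\omega\in\omega_0+\mathscr V_J$ satisfies \eqref{eq:almost-SKT-constraint}, since both constraints are linear in $\omega$ for fixed $J$. In particular $\widetilde N_\omega$ is a $3$-form. By \cite{FriedrichIvanov2002} the characteristic connection $\nabla^B_\omega$ therefore exists, so $\rho^B_\omega$ and its $(1,1)$-part are defined.

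The second step is smoothness, and it is the only genuinely new point. I write
\[
\nabla^B_\omega=\nabla^{g_\omega}+\tfrac12 g_\omega^{-1}T^B_\omega,
\qquad T^B_\omega=d^c\omega+\widetilde N_\omega .
\]
Both terms depend polynomially on $\omega$, $g_\omega^{-1}$ and their first derivatives. Since $g_\omega(X,Y)=\omega(X,JY)$ is linear in $\omega$ and inversion is smooth on positive forms, the Christoffel-type coefficients, and hence the curvature $R^B_\omega$, are smooth functions of the $2$-jet of $\omega$. Contracting with $J$ and a $g_\omega$-orthonormal frame (equivalently with $g_\omega^{-1}$) and projecting onto the fixed $(1,1)$-part preserves this. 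So $\omega\mapsto(\rho^B_\omega)^{1,1}$ is a smooth nonlinear differential operator of order two on positive forms.

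Restricted to the finite-dimensional affine space parametrized by $c\in\mathbb R^m$, $\omega(c)=\omega_0+\sum c_a\phi_a$ with smooth $\phi_a$, the map $c\mapsto(\rho^B_{\omega(c)})^{1,1}$ is then smooth into $C^k$ for every $k$. Consequently the coefficients $\langle\mathcal F_B(\omega(c)),\phi_a\rangle_{L^2(g_0)}$ in \eqref{eq:finite-ODE-final} are smooth in $c$ on the positivity set. This is the input Proposition~\ref{thm:general-short-time-final} requires, and the main care needed is confirming this smooth dependence rather than any analytic estimate.

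The remaining steps are automatic. The ODE \eqref{eq:finite-ODE-final} has a unique local solution, positivity persists for short time, and $\partial_t\omega\in\mathscr V_J$ holds by construction. Proposition~\ref{prop:constraint-preservation-final} then yields $\widetilde N_{\omega(t)}\in\Omega^3(M)$ and $dT^B_{\omega(t)}=0$. Uniqueness among smooth solutions of \eqref{eq:Bismut-Ricci-constrained-final} also follows: any solution has $\partial_t\omega$ in the range of $\mathbb P_J$, so it stays in $\omega_0+\mathscr V_J$ and hence solves the same ODE.
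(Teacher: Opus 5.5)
Your proposal is correct and follows essentially the same route as the paper. Finite-dimensionality of $\mathscr V_J$ comes from Proposition~\ref{prop:overdetermined-elliptic-final}. Both constraints hold on the positive part of $\omega_0+\mathscr V_J$, so $\rho^B_\omega$ is defined there. The map $\omega\mapsto-(\rho^B_\omega)^{1,1}$ is smooth as a curvature expression, and Proposition~\ref{thm:general-short-time-final} then applies. Your explicit formula for $\nabla^B_\omega$ and your remark on uniqueness among all smooth solutions simply spell out details the paper leaves implicit.
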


\begin{proof}
Under \emph{(OE)}, Proposition~\ref{prop:overdetermined-elliptic-final} shows
that $\mathscr V_J$ is finite dimensional and smooth.  On the open subset of
positive forms in $\omega_0+\mathscr V_J$, both constraints hold, so the
characteristic connection and $\rho^B_\omega$ are defined.  Since curvature is
a smooth expression in the metric, its inverse, and finitely many derivatives
of the metric, the map
\[
\omega\longmapsto-(\rho^B_\omega)^{1,1}
\]
is smooth on this finite-dimensional open set.  Proposition
\ref{thm:general-short-time-final} therefore applies.
\end{proof}

The important point is that no parabolicity statement for the projected
Bismut--Ricci operator is needed in Proposition
\ref{thm:Bismut-short-time-final}: overdetermined ellipticity of the
constraint makes the admissible space finite dimensional, and the flow
becomes an ODE.
{\begin{remark}
\label{rem:pluriclosed-limit}
If $J$ is integrable, the algebraic constraint in
\eqref{eq:almost-SKT-constraint} is vacuous, $T^B_\omega=d^c\omega$, and the
differential constraint reduces to the pluriclosed condition
$dd^c\omega=0$. The admissible space is then infinite dimensional, but the
projection $\mathbb P_J$ is unnecessary, since the unprojected evolution
\begin{equation}\label{eq:pluriclosed-limit-final}
\partial_t\omega=-(\rho^B_\omega)^{1,1},
\qquad
\omega(0)=\omega_0,
\end{equation}
already preserves the condition $dd^c\omega=0$. Equation
\eqref{eq:pluriclosed-limit-final} is the pluriclosed flow of
\cite{StreetsTian2010}. For pluriclosed initial data it admits a
unique smooth short-time solution by parabolic methods, rather than by the
finite-dimensional argument of
Proposition~\ref{thm:Bismut-short-time-final}. Thus
\eqref{eq:Bismut-Ricci-constrained-final} has the expected integrable limit,
while in the non-integrable setting the characteristic torsion acquires the
Nijenhuis correction and the admissible variations are subject to the
additional algebraic constraint.
\end{remark}
}
\section{Rigidity and the dimensional picture}
  This section studies when the overdetermined ellipticity condition
\emph{(OE)} holds pointwise. We introduce Nijenhuis rigidity,
characterize it algebraically through the orthogonal indecomposability
of the complex Nijenhuis three-form, and derive the resulting
dimensional picture. We then specialize to real dimension six, where
pointwise rigidity holds wherever the skew Nijenhuis form is nonzero
and the condition $dT^B=0$ yields the global dichotomy of
Theorem~\ref{thr3.16-dimsix}. We also rule out standard parallel-torsion families as a
source of non-flat non-integrable examples.

\subsection{A rigidity criterion}

\begin{sloppypar} Throughout this subsection we assume $n\geq 2$.
A particularly transparent situation occurs when the algebraic Nijenhuis
constraint determines the Hermitian metric infinitesimally up to scale.
\end{sloppypar}

\begin{definition}
We say that $(J,\omega_0)$ is \emph{Nijenhuis-rigid} if
\begin{equation}\label{eq:N-rigid-final}
\mathcal E_{J,x}=\mathbb R\,\omega_{0,x}
\qquad\text{for every }x\in M.
\end{equation}
\end{definition}

\begin{prop}[Rigidity implies \emph{(OE)}]
\label{prop:rigidity-OE-final}
Assume $n\ge2$ and \eqref{eq:N-rigid-final}.  Then \emph{(OE)} holds.
\end{prop}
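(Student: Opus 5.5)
Under \eqref{eq:N-rigid-final}, the condition \emph{(OE)} reduces to a single pointwise statement. Every $\alpha\in\mathcal E_{J,x}$ has the form $\alpha=\lambda\,\omega_{0,x}$ with $\lambda\in\mathbb R$. It therefore suffices to show that for every $x\in M$ and every $0\neq\xi\in T^*_xM$,
\[
\xi\wedge J^*\xi\wedge\omega_{0,x}\neq0
\qquad\text{in }\Lambda^4T^*_xM .
\]
Once this holds, $\xi\wedge J^*\xi\wedge\lambda\omega_0=0$ forces $\lambda=0$, which is exactly \eqref{eq:OE-final}. Note also that rigidity gives constant rank one for $\ker\mathcal B_{N_J}$, so $\mathcal E_J$ is a smooth line bundle spanned by $\omega_0$.

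The nonvanishing is pure linear algebra. Fix $x$ and a nonzero $\xi$. Rescale $\xi$ and choose a $g_0$-orthonormal, $J$-adapted coframe $e^1,Je^1=e^2,\dots,e^{2n-1},e^{2n}$ at $x$ with $\xi=e^1$. This is possible because $J$ is $g_0$-orthogonal and $\xi$, $J^*\xi$ are orthogonal of equal length. Then $J^*\xi=\pm e^2$, with the sign depending on conventions, and
\[
\omega_0=\pm\sum_{k=1}^n e^{2k-1}\wedge e^{2k},
\]
again up to sign convention. Hence
\[
\xi\wedge J^*\xi\wedge\omega_0=\pm\, e^{12}\wedge\sum_{k\ge2}e^{2k-1,2k},
\]
since the $k=1$ term is killed by $e^{12}\wedge e^{12}=0$. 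The remaining sum is a nonzero multiple of the linearly independent $4$-forms $e^{12}\wedge e^{2k-1,2k}$, $k=2,\dots,n$.

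This sum is nonempty precisely because $n\ge2$, which is where the hypothesis enters. For $n=1$ the wedge would vanish identically, as every $4$-form does. I expect no real obstacle here; the only care needed is with the sign conventions for $J^*\xi$ and $\omega_0$, which do not affect nonvanishing.
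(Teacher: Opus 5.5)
Your proposal is correct and follows essentially the same argument as the paper: reduce to $\alpha=\lambda\omega_0$, choose a $g_0$-unitary coframe with $\xi=e^1$ and $J^*\xi=\pm e^2$, and observe that $\xi\wedge J^*\xi\wedge\omega_0=\pm\,e^{12}\wedge\sum_{k\ge2}e^{2k-1,2k}\neq0$ because $n\ge2$. Your extra remarks, that the sign conventions do not affect the nonvanishing and that $\mathcal E_J$ is the line bundle spanned by $\omega_0$, are accurate but not needed.
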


\begin{proof}
Let $0\neq\xi\in T_x^*M$.  Choose a $g_0$-unitary real coframe so that,
up to a nonzero scalar, $\xi=e^1$ and $J^*\xi=e^2$.  Then
\[
\omega_0=e^{12}+e^{34}+\cdots+e^{2n-1,2n},
\]
and hence
\[
\xi\wedge J^*\xi\wedge\omega_0
=e^{12}\wedge
\left(e^{34}+\cdots+e^{2n-1,2n}\right)\neq0.
\]
Since every $\alpha\in\mathcal E_{J,x}$ is a scalar multiple of $\omega_0$,
\eqref{eq:OE-final} follows.
\end{proof}

For a closed-torsion initial structure, rigidity gives more: the global
admissible space is exactly one-dimensional.

\begin{prop}
\label{prop:rigid-kernel-final}
Assume that $M$ is compact, $dT^B_{\omega_0}=0$, and
\eqref{eq:N-rigid-final} holds.  Then
\begin{equation}\label{eq:V-rigid-final}
\mathscr V_J=\mathbb R\,\omega_0.
\end{equation}
\end{prop}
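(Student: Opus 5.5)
By rigidity, any $\alpha\in\Gamma(\mathcal E_J)$ has the form $\alpha=f\omega_0$ with $f\in C^\infty(M)$; smoothness of $f$ follows from $f=\langle\alpha,\omega_0\rangle_{g_0}/n$. The inclusion $\mathbb R\,\omega_0\subset\mathscr V_J$ is immediate. Indeed, $\omega_0\in\Gamma(\mathcal E_J)$ because $\widetilde N_{\omega_0}$ is a $3$-form. Moreover, $\mathscr D_J$ is linear and $\mathscr D_J\omega_0=d(d^c\omega_0+\mathcal N_J(\omega_0))=dT^B_{\omega_0}=0$. Here we use that $\mathcal N_J(\omega_0)=\widetilde N_{\omega_0}$, which is the variation formula \eqref{eq:variation-T-final} applied along the ray $t\omega_0$. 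So the whole proof reduces to showing that $L_J(f\omega_0)=0$ forces $f$ to be constant.

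To do this, expand $L_J(f\omega_0)$ by Leibniz. Since $\mathcal N_J$ is zeroth order and $C^\infty$-linear, we get $\mathcal N_J(f\omega_0)=f\widetilde N_{\omega_0}$. For the other term, $d^c(f\omega_0)=f\,d^c\omega_0+d^cf\wedge\omega_0$, up to the sign convention; here $d^cf=\pm J^*df$. Hence
\[
T:=d^c(f\omega_0)+\mathcal N_J(f\omega_0)=f\,T^B_{\omega_0}\pm J^*df\wedge\omega_0 .
\]
Applying $d$ and using $dT^B_{\omega_0}=0$ gives
\[
0=L_J(f\omega_0)=df\wedge T^B_{\omega_0}\pm dJ^*df\wedge\omega_0\mp J^*df\wedge d\omega_0 .
\]

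The main step, and the expected obstacle, is to extract $df=0$ from this identity. I would use a maximum-principle or integration argument rather than pointwise algebra, because the top-order term $dd^cf\wedge\omega_0$ by itself is not elliptic in $f$ when $n\geq3$. The plan is to wedge with $\omega_0^{n-2}$ to produce a top-degree form. This yields
\[
dd^cf\wedge\omega_0^{n-1}+(\text{first-order terms in } f)\wedge\omega_0^{n-2}=0 .
\]
Since $dd^cf\wedge\omega_0^{n-1}$ is a positive multiple of $\Delta^{Ch}f\,\omega_0^n$, up to lower-order terms, we obtain a scalar second-order elliptic equation $Pf=0$. This operator has no zeroth-order term: every term carries a derivative of $f$, because $dT^B_{\omega_0}=0$ killed the undifferentiated $f$. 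The strong maximum principle (Hopf) on compact connected $M$ then forces $f$ to be constant. A cleaner alternative is possible when $n\geq3$: wedge the first identity with $\xi$-type covectors, or contract pointwise, to obtain the whole Hessian of $f$, and use the algebraic injectivity from Proposition~\ref{prop:rigidity-OE-final}.

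Either route shows that $f\omega_0\in\mathscr V_J$ implies $f\in\mathbb R$, which proves \eqref{eq:V-rigid-final}. The maximum principle route is the one I would commit to, since it needs only the absence of zeroth-order terms and compactness.
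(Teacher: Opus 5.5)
Your proposal is correct and is essentially the paper's proof. You write $\alpha=f\omega_0$ and expand $L_J(f\omega_0)=dd^cf\wedge\omega_0-d^cf\wedge d\omega_0+df\wedge T^B_{\omega_0}$ using $dT^B_{\omega_0}=0$. You then wedge with $\omega_0^{n-2}$ to get a scalar elliptic equation whose symbol is a positive multiple of $|\xi|^2_{g_0}$ and which has no zeroth-order term, and conclude by the strong maximum principle on compact connected $M$.

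The pointwise ``Hessian'' alternative you mention is unnecessary and, as sketched, would still need a global argument, so committing to the maximum-principle route is the right call.
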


\begin{proof}
Every section of $\mathcal E_J$ has the form $\alpha=f\omega_0$.  Since
$\mathcal N_J(\omega_0)=\widetilde N_{\omega_0}$, we obtain
\begin{align}
L_J(f\omega_0)
&=d\left(d^c(f\omega_0)+f\widetilde N_{\omega_0}\right)\nonumber\\
&=d\left(d^cf\wedge\omega_0+fT^B_{\omega_0}\right)\nonumber\\
&=dd^cf\wedge\omega_0
-d^cf\wedge d\omega_0
+df\wedge T^B_{\omega_0},
\label{eq:L-scalar-final}
\end{align}
where the term $f\,dT^B_{\omega_0}$ vanishes by hypothesis.
If $L_J(f\omega_0)=0$, wedge \eqref{eq:L-scalar-final} with
$\omega_0^{n-2}$.  After division by the volume form this gives a scalar
second-order equation
\begin{equation}\label{eq:scalar-elliptic-final}
P_0f=0
\end{equation}
with no zeroth-order term.  Its principal symbol is a nonzero fixed multiple of
\[
\frac{\xi\wedge J^*\xi\wedge\omega_0^{n-1}}
{dV_{g_0}}
=c_n|\xi|_{g_0}^2,
\qquad c_n>0,
\]
so, after multiplying by an overall sign if necessary, $P_0$ is uniformly
elliptic.  The strong maximum principle on the compact connected manifold
therefore implies that $f$ is constant.  Conversely, constant multiples of
$\omega_0$ clearly lie in $\ker L_J$, because $dT^B_{\omega_0}=0$.  This proves
\eqref{eq:V-rigid-final}.
\end{proof}

\begin{thm}[Global rigidity of the almost-SKT metric]\label{thm:global-rigid-moduli}
Assume that $M$ is compact, that $\omega_0$ satisfies
\eqref{eq:almost-SKT-constraint}, and that $(J,\omega_0)$ is
Nijenhuis-rigid.  Let
\[
\mathcal A_J
:=\left\{\omega>0:\ \widetilde N_\omega\in\Omega^3(M),\quad
 dT^B_\omega=0\right\}
\]
be the space of positive $J$-compatible almost-SKT forms with $J$ fixed.
Then
\begin{equation}\label{eq:global-rigid-moduli}
\mathcal A_J=\{\lambda\omega_0:\lambda>0\}.
\end{equation}
Thus the compatible almost-SKT metric is globally unique up to constant
homothety, not merely infinitesimally rigid.
\end{thm}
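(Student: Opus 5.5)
The plan is to reduce the global statement to the infinitesimal one in Proposition~\ref{prop:rigid-kernel-final}. The two constraints in \eqref{eq:almost-SKT-constraint} are linear in $\omega$ once $J$ is fixed. So any $\omega\in\mathcal A_J$ gives a difference $\alpha:=\omega-\omega_0$ that satisfies both linearized constraints exactly, not just to first order. The inclusion $\{\lambda\omega_0\}\subset\mathcal A_J$ is immediate: $\widetilde N_{\lambda\omega_0}=\lambda\widetilde N_{\omega_0}$ and $T^B_{\lambda\omega_0}=\lambda T^B_{\omega_0}$ for constant $\lambda>0$.

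For the reverse inclusion, take $\omega\in\mathcal A_J$ with metric $g$, and set $h=g-g_0=\mathcal I^{-1}(\omega-\omega_0)$. Then
\[
\mathcal B_{N_J}(h)(X,Y,Z)=\bigl[\widetilde N_g(X,Y,Z)+\widetilde N_g(X,Z,Y)\bigr]-\bigl[\widetilde N_{g_0}(X,Y,Z)+\widetilde N_{g_0}(X,Z,Y)\bigr],
\]
and both brackets vanish. Hence $h\in\ker\mathcal B_{N_J}$, so $\alpha\in\Gamma(\mathcal E_J)$. By Nijenhuis rigidity \eqref{eq:N-rigid-final}, $\alpha=u\,\omega_0$ for some smooth function $u$, so $\omega=f\omega_0$ with $f=1+u$. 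Positivity of $\omega$ forces $f>0$ pointwise. Smoothness of $f$ follows from $f=\langle\omega,\omega_0\rangle_{g_0}/n$.

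Next I use the closed-torsion condition. From \eqref{eq:Bismut-torsion-convention} and linearity, $T^B_{f\omega_0}=d^c(f\omega_0)+f\widetilde N_{\omega_0}$, so
\[
0=dT^B_{\omega}-f\,dT^B_{\omega_0}=L_J(f\omega_0)
\]
by the computation \eqref{eq:L-scalar-final}, where $dT^B_{\omega_0}=0$ was already used. Thus $f\omega_0\in\mathscr V_J$. Proposition~\ref{prop:rigid-kernel-final} then gives that $f$ is constant: wedging with $\omega_0^{n-2}$ yields the scalar elliptic equation $P_0f=0$ with no zeroth-order term, and the strong maximum principle on the compact connected manifold applies. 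Setting $\lambda:=f>0$ finishes the proof.

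The main point to check is that the linearity argument really applies to the finite difference and not only infinitesimally. This rests on $\mathcal B_{N_J}$ and $\mathscr D_J$ being honestly linear in $h$ and $\alpha$ with $J$ fixed, and on \eqref{eq:N-rigid-final} holding at every point. Nothing beyond compactness and connectedness is needed for the maximum-principle step. A small bookkeeping point is the constant-rank hypothesis implicit in the definition of $\mathcal E_J$. Here rigidity gives constant rank one, so $\mathcal E_J$ is a line bundle spanned by $\omega_0$ and no extra assumption is required.
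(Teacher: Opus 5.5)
Your proposal is correct and follows the paper's argument. Because both constraints are linear in $\omega$ for fixed $J$, the difference $\omega-\omega_0$ lies in $\mathscr V_J$. Proposition~\ref{prop:rigid-kernel-final} then forces it to be a constant multiple of $\omega_0$, and positivity gives $\lambda>0$. The only difference is that you spell out the intermediate step $\omega=f\omega_0$ before invoking the kernel computation, which the paper leaves implicit.
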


\begin{proof}
For fixed $J$, both the skew-Nijenhuis condition and the closed-torsion
condition are linear in the fundamental form.  Hence, if $\omega\in\mathcal
A_J$, then $\alpha:=\omega-\omega_0$ belongs to $\mathscr V_J$.  By
Proposition~\ref{prop:rigid-kernel-final}, $\alpha=c\omega_0$ for a constant
$c$.  Positivity gives $1+c>0$, proving \eqref{eq:global-rigid-moduli}.  The
converse follows from linearity of the torsion and the Nijenhuis lowering
under constant homothety.
\end{proof}

\begin{thm}[Complete dynamics in the Nijenhuis-rigid case]
\label{thm:rigid-complete-dynamics}
Under the hypotheses of Theorem~\ref{thm:global-rigid-moduli},  the unique
solution of the constrained Bismut--Ricci equation
\eqref{eq:Bismut-Ricci-constrained-final} is given explicitly by
\begin{equation}\label{eq:rigid-homothetic-all-dim}
\omega(t)=\lambda(t)\omega_0,\qquad
\lambda(t)=1+\mu_0t,
\end{equation}
where
\begin{equation}\label{eq:rigid-mu-all-dim}
\mu_0
:=-\frac{\displaystyle\int_M
\left\langle(\rho^B_{\omega_0})^{1,1},\omega_0\right\rangle_{g_0}\,dV_{g_0}}
{\displaystyle\int_M|\omega_0|_{g_0}^2\,dV_{g_0}}.
\end{equation}
Its maximal time interval containing $0$ is exactly
\begin{equation}\label{eq:rigid-maximal-interval}
I_{\max}=\{t\in\mathbb R:1+\mu_0t>0\}.
\end{equation}
In particular, the forward solution is immortal for $\mu_0\ge0$, while for
$\mu_0<0$ it becomes extinct at
$T_{\max}=-\mu_0^{-1}.$

After volume normalization the solution is stationary:
\begin{equation}\label{eq:rigid-normalized-stationary}
\left(\frac{\operatorname{Vol}(g_0)}{\operatorname{Vol}(g(t))}\right)^{1/n}
\omega(t)=\omega_0.
\end{equation}
Moreover
\begin{equation}\label{eq:rigid-scaling-laws}
\operatorname{Vol}(g(t))=\lambda(t)^n\operatorname{Vol}(g_0),
\qquad
|T^B(t)|_{g(t)}^2=\lambda(t)^{-1}|T^B_0|_{g_0}^2.
\end{equation}
If the $(0,4)$ curvature tensor of the characteristic connection is nonzero,
then
\[
|\operatorname{Rm}^B(g(t))|_{g(t)}
=\lambda(t)^{-1}|\operatorname{Rm}^B(g_0)|_{g_0}.
\]
Thus every finite-time extinction in the rigid class is of type I with
respect to the characteristic curvature.
\end{thm}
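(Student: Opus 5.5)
By Theorem~\ref{thm:global-rigid-moduli} and Proposition~\ref{prop:rigid-kernel-final}, the admissible cone is $\mathcal U=\{\lambda\omega_0:\lambda>0\}$ and $\mathscr V_J=\mathbb R\,\omega_0$. The plan is to reduce the constrained flow \eqref{eq:Bismut-Ricci-constrained-final} to a scalar ODE for $\lambda(t)$ and solve it. Since $\mathbb P_J$ is the $L^2(g_0)$-projection onto $\mathbb R\,\omega_0$, we have
\[
\mathbb P_J\beta=\frac{\langle\beta,\omega_0\rangle_{L^2(g_0)}}{\|\omega_0\|^2_{L^2(g_0)}}\,\omega_0 .
\]
Writing $\omega(t)=\lambda(t)\omega_0$, the equation becomes
\[
\dot\lambda=-\frac{\langle(\rho^B_{\lambda\omega_0})^{1,1},\omega_0\rangle_{L^2(g_0)}}{\|\omega_0\|^2_{L^2(g_0)}}.
\]

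The key step is the scaling behaviour of the characteristic connection under constant homothety. For constant $\lambda>0$, the Bismut connection of $\lambda g_0$ coincides with that of $g_0$. Indeed, $\nabla^B$ is the unique $J$-Hermitian connection with totally skew torsion. The torsion $T^B_{\lambda\omega_0}=d^c(\lambda\omega_0)+\widetilde N_{\lambda\omega_0}=\lambda T^B_{\omega_0}$ is still skew for $\lambda g_0$, and the $(1,2)$ torsion tensor is unchanged, so $\nabla^{B}$ itself does not change. Consequently $R^B$ as a $(1,3)$ tensor is unchanged, and
\[
\rho^B(X,Y)=\tfrac12 R^B(X,Y,e_a,Je_a)=\tfrac12\,\mathrm{tr}\bigl(J\circ R^B(X,Y)\bigr)
\]
is $\lambda$-independent. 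It follows that $\dot\lambda=\mu_0$ with $\mu_0$ as in \eqref{eq:rigid-mu-all-dim}, and hence $\lambda=1+\mu_0t$. Uniqueness comes from Proposition~\ref{thm:Bismut-short-time-final}, since rigidity implies (OE) by Proposition~\ref{prop:rigidity-OE-final}; alternatively it follows from uniqueness for this Lipschitz (constant) scalar ODE. I expect this invariance of $\nabla^B$ and $\rho^B$ under homothety to be the only real point. It should be checked carefully using the Friedrich--Ivanov formula $g(\nabla^B_XY,Z)=g(\nabla^{LC}_XY,Z)+\tfrac12T^B(X,Y,Z)$, in which both sides scale by $\lambda$.

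For the maximal interval, note that the solution stays in $\mathcal U$ exactly while $1+\mu_0t>0$. As $1+\mu_0 t\to0$ it leaves every compact subset of the cone, so by Proposition~\ref{prop:finite-dimensional-continuation} and uniqueness it cannot be extended. This gives \eqref{eq:rigid-maximal-interval}, immortality for $\mu_0\ge0$, and extinction at $-\mu_0^{-1}$ otherwise.

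The scaling laws are direct. Since $dV_{\lambda g}=\lambda^n dV_g$ in real dimension $2n$, we get $\operatorname{Vol}(g(t))=\lambda^n\operatorname{Vol}(g_0)$, and this yields \eqref{eq:rigid-normalized-stationary}. For the torsion, a $3$-form norm scales by $\lambda^{-3}$, so $|\lambda T|^2_{\lambda g}=\lambda^2\lambda^{-3}|T|^2=\lambda^{-1}|T|^2$. The $(0,4)$ curvature is $\lambda R^B_{0}$, and its norm with four inverse metrics gives $|\mathrm{Rm}^B|^2_{\lambda g}=\lambda^2\lambda^{-4}|\mathrm{Rm}^B|^2$, that is, $|\mathrm{Rm}^B|=\lambda^{-1}|\mathrm{Rm}^B_0|$. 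When $\mu_0<0$ we have $\lambda(t)=|\mu_0|(T_{\max}-t)$, so $(T_{\max}-t)\,|\mathrm{Rm}^B|$ stays bounded, which is the type I property.
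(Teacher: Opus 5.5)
Your proposal is correct and follows the paper's proof essentially step by step. You use $\mathscr V_J=\mathbb R\,\omega_0$ to write $\omega(t)=\lambda(t)\omega_0$, and you get invariance of $\nabla^B$ (hence of $\rho^B$) under constant homothety because the Levi--Civita connection is unchanged while $T^B$ scales by $\lambda$. This reduces the flow to $\dot\lambda=\mu_0$, and the volume, torsion and curvature laws are then direct scaling computations.

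One small point: Proposition~\ref{prop:finite-dimensional-continuation} only gives a necessary condition for $T_{\max}<\infty$, not a criterion for non-extendability. The exactness of $I_{\max}$ follows instead from uniqueness of the solution $\lambda=1+\mu_0t$ together with the fact that positivity is equivalent to $\lambda>0$, which is exactly how the paper argues.
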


\begin{proof}
Theorem~\ref{thm:global-rigid-moduli} and
Proposition~\ref{prop:rigid-kernel-final} imply that every solution is of the
form $\omega(t)=\lambda(t)\omega_0$.  Under a spatially constant homothety
$g\mapsto\lambda g$, the Levi--Civita connection is unchanged and $T^B$ as a
three-form scales by $\lambda$.  Raising one index therefore leaves the
torsion endomorphism in the characteristic connection unchanged.  Hence the
characteristic connection, its curvature endomorphism and its Ricci form are
unchanged.  Since the $L^2(g_0)$ projection is fixed, the flow equation reduces
to $\dot\lambda=\mu_0$, proving \eqref{eq:rigid-homothetic-all-dim}.  Positivity
is equivalent to $\lambda>0$, which proves the exact maximal interval.
The remaining identities follow directly from constant rescaling.  For the
curvature statement, the $(0,4)$ curvature tensor acquires one factor
$\lambda$ from lowering its output index while its norm uses four inverse
metrics, hence its norm scales by $\lambda^{-1}$.  When $\mu_0<0$,
$\lambda(t)=|\mu_0|(T_{\max}-t)$, giving the type-I rate.
\end{proof}
 When $\omega_0$ is balanced, the constant $\mu_0$ governing the
homothetic flow admits a cohomological interpretation in terms of the
real first Chern class.

\begin{cor}
\label{cor:balanced-rigid-topological-slope}
Under the hypotheses of Theorem~\ref{thm:rigid-complete-dynamics}, assume in addition that $d(\omega_0^{n-1})=0$.  Define the $\omega_0$-degree
of the real first Chern class by
\begin{equation}\label{eq:balanced-c1-degree}
\deg_{\omega_0}c_1(TM,J)
:=\frac1{2\pi}\int_M
\left\langle(\rho^B_{\omega_0})^{1,1},\omega_0\right\rangle_{g_0}\,dV_{g_0}.
\end{equation}
Then this number depends only on $c_1(TM,J)$ and the cohomology class of
$\omega_0^{n-1}$, and
\begin{equation}\label{eq:mu-degree}
\mu_0=-\frac{2\pi}{n\,\operatorname{Vol}(g_0)}
\deg_{\omega_0}c_1(TM,J).
\end{equation}
In particular, if $c_1(TM,J)=0$ in $H^2(M,\mathbb R)$, then the constrained
Bismut--Ricci flow is stationary.
\end{cor}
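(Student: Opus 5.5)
The plan is to rewrite the integrand pointwise as a wedge product against $\omega_0^{n-1}$ and then to identify $\rho^B_{\omega_0}$ with a closed representative of $2\pi c_1(TM,J)$ up to an exact term.

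\textbf{Pointwise step.} For any real $2$-form $\beta$ one has
\[
\beta\wedge\omega_0^{n-1}=(n-1)!\,\langle\beta,\omega_0\rangle_{g_0}\,dV_{g_0},
\]
with the normalization of the pairing fixed so that $|\omega_0|^2_{g_0}=n$. This uses $\omega_0^n=n!\,dV_{g_0}$. Only the $(1,1)$-part of $\beta$ contributes, because $\beta^{2,0+0,2}\wedge\omega_0^{n-1}=0$ for type reasons. Hence
\[
\int_M\langle(\rho^B_{\omega_0})^{1,1},\omega_0\rangle\,dV_{g_0}
=\frac1{(n-1)!}\int_M\rho^B_{\omega_0}\wedge\omega_0^{n-1},
\qquad
\int_M|\omega_0|^2\,dV_{g_0}=n\operatorname{Vol}(g_0).
\]
Substituting into \eqref{eq:rigid-mu-all-dim} gives \eqref{eq:mu-degree} directly, with the convention in \eqref{eq:balanced-c1-degree}.

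\textbf{Closedness of $\rho^B$ and its class.} Since $\nabla^B$ is a Hermitian connection on $(TM,J)$, $\rho^B_{\omega_0}$ is, up to the factor $\tfrac12$ and sign conventions, the curvature of the induced connection on the anticanonical line bundle $K^{-1}=\Lambda^n_{\mathbb C}(TM,J)$. Concretely, $\rho^B=\tfrac{1}{2}\operatorname{tr}(J R^B)$ is $i$ times the trace of the complex curvature. By Chern--Weil theory it is therefore closed, and $[\rho^B_{\omega_0}]=2\pi c_1(TM,J)$ in $H^2(M,\mathbb R)$. The factor should be checked against the normalization $\rho^B(X,Y)=\tfrac12R^B(X,Y,e_a,Je_a)$, which equals the full trace over a unitary frame.

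\textbf{Cohomological dependence.} Because $d(\omega_0^{n-1})=0$ and $\rho^B_{\omega_0}$ is closed, the integral $\int_M\rho^B\wedge\omega_0^{n-1}$ depends only on the classes $[\rho^B]$ and $[\omega_0^{n-1}]$. Changing either representative by an exact form changes the integral by the integral of an exact form, which vanishes by Stokes on the compact manifold without boundary. The class $[\rho^B]$ is $2\pi c_1$, which is independent of the Hermitian connection. So $\deg_{\omega_0}c_1$ equals $\tfrac{1}{(n-1)!}\langle c_1\cup[\omega_0^{n-1}],[M]\rangle$ and depends only on the stated data. If $c_1(TM,J)=0$ in real cohomology, then $\rho^B$ is exact, so $\mu_0=0$, and by Theorem~\ref{thm:rigid-complete-dynamics} the solution $\omega(t)=\omega_0$ is stationary.

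\textbf{Main obstacle.} The only delicate point is bookkeeping of constants. These are the $(n-1)!$ against the normalization of $\langle\cdot,\cdot\rangle$ on $2$-forms, and the $2\pi$ together with the sign relating $\rho^B$ to $c_1$. The displayed formula \eqref{eq:mu-degree} as stated corresponds to the convention $\beta\wedge\omega_0^{n-1}/(n-1)!=\langle\beta,\omega_0\rangle dV$ and $|\omega_0|^2=n$. I will fix these conventions so that \eqref{eq:mu-degree} follows with no extra factor. The qualitative conclusions, namely topological dependence and stationarity when $c_1=0$, are insensitive to the constants.
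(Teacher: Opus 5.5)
Your proposal is correct and follows the same route as the paper. The argument rests on three points: Chern--Weil closedness of $\rho^B_{\omega_0}$ with $[\rho^B_{\omega_0}]=2\pi c_1(TM,J)$ up to the sign convention; the pointwise contraction identity $\beta\wedge\omega_0^{n-1}=(n-1)!\,\langle\beta,\omega_0\rangle_{g_0}\,dV_{g_0}$ combined with Stokes and $d(\omega_0^{n-1})=0$; and $|\omega_0|^2_{g_0}=n$ for the denominator of $\mu_0$. You also make explicit the $(n-1)!$ and sign bookkeeping that the paper leaves under ``the algebraic Hermitian contraction identity,'' and, as you note, the qualitative conclusions do not depend on it.
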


\begin{proof}
The characteristic connection is unitary, so its first Ricci form is closed
and represents $2\pi c_1(TM,J)$ by Chern--Weil theory.  Since
$d(\omega_0^{n-1})=0$, pairing this closed two-form with
$\omega_0^{n-1}$ is cohomological; the algebraic Hermitian contraction
identity gives \eqref{eq:balanced-c1-degree}, with the sign absorbed by our
fixed fundamental-form convention.  Finally $|\omega_0|_{g_0}^2=n$, so the
denominator in \eqref{eq:rigid-mu-all-dim} is
$n\operatorname{Vol}(g_0)$, giving \eqref{eq:mu-degree}.
\end{proof}

\subsection{Algebraic characterization of Nijenhuis-rigidity}

The rigidity hypothesis above admits a useful intrinsic characterization in
terms of the $(0,3)$-part of the skew Nijenhuis form.  Let $(W,\langle\cdot,\cdot\rangle)$
be a Hermitian vector space of complex dimension $n$, and let
$0\neq\Psi\in\Lambda^3W^*$.  We say that $\Psi$ is \emph{orthogonally
decomposable} if there is a non-trivial Hermitian orthogonal splitting
\[
W=W_1\oplus W_2
\]
and forms $\Psi_i\in\Lambda^3W_i^*$, possibly with one of the $\Psi_i$ equal
to zero, such that
\[
\Psi=\Psi_1+\Psi_2.
\]
Otherwise $\Psi$ is called \emph{orthogonally indecomposable}.

\begin{prop}[Algebraic rigidity criterion]\label{prop:algebraic-rigidity-criterion}
At a point where the skew Nijenhuis form is nonzero, let
$
\Psi:=(\widetilde N_{\omega_0})^{0,3}.$
Then
\[
\mathcal E_{J,x}=\mathbb R\,\omega_{0,x}
\quad\Longleftrightarrow\quad
\Psi_x\text{ is orthogonally indecomposable}.
\]
\end{prop}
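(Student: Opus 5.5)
The plan is to translate the condition $\alpha\in\mathcal E_{J,x}$ into a symmetry condition on $\Psi$ twisted by a Hermitian endomorphism, and then diagonalize that endomorphism. Write $W=(T_xM,J)$ with the Hermitian inner product induced by $g_0$. Every $h\in S^2_JT^*_xM$ is $h(X,Y)=g_0(AX,Y)$ for a unique $g_0$-self-adjoint endomorphism $A$ commuting with $J$, i.e.\ a complex-linear Hermitian endomorphism of $W$. The element $\omega_0$ corresponds to $A=\mathrm{id}$. Hence the statement becomes: the only Hermitian $A$ with $\mathcal B_{N_J}(h_A)=0$ are the real multiples of the identity if and only if $\Psi_x$ is orthogonally indecomposable.

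\emph{Step 1 (type bookkeeping).} From $N_J(JX,Y)=N_J(X,JY)=-JN_J(X,Y)$ and the skew-symmetry of $\widetilde N_{\omega_0}$, the form $\widetilde N_{\omega_0}$ is of type $(3,0)+(0,3)$, so $\widetilde N_{\omega_0}=2\,\mathrm{Re}\,\Psi$. The twisted tensor $h_A(N_J(X,Y),Z)=\widetilde N_{\omega_0}(X,Y,AZ)$ has the same bidegree behaviour in each slot, because $A$ commutes with $J$. It is therefore $2\,\mathrm{Re}\,\Psi(X,Y,AZ)$, where $\Psi$ is regarded as complex-antilinear in each slot.

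\emph{Step 2 (reformulate the constraint).} The condition $\mathcal B_{N_J}(h_A)=0$ says that $(X,Y,Z)\mapsto\widetilde N_{\omega_0}(X,Y,AZ)$ is antisymmetric in $Y,Z$. Since it is already antisymmetric in $X,Y$, it is then totally skew. Comparing types as in Step 1, this is equivalent to
\[
\Psi(AX,Y,Z)=\Psi(X,AY,Z)=\Psi(X,Y,AZ)\qquad\text{for all }X,Y,Z.
\]
This is the step I expect to be the main obstacle. One must check carefully that skewness of the real part forces the identity for the complex form, and that the antilinearity of $\Psi$ is compatible with the real eigenvalues of $A$.

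\emph{Step 3 (diagonalize).} Take a unitary eigenbasis $e_i$ of $A$ with real eigenvalues $\lambda_i$. Evaluating the identity of Step 2 on $e_i,e_j,e_k$ gives
\[
(\lambda_i-\lambda_j)\Psi(e_i,e_j,e_k)=0=(\lambda_j-\lambda_k)\Psi(e_i,e_j,e_k).
\]
Thus $\Psi(e_i,e_j,e_k)\neq0$ only when all three vectors lie in a single eigenspace, and so $\Psi=\sum_\lambda\Psi_\lambda$ with $\Psi_\lambda\in\Lambda^3W_\lambda^*$. If $A$ is not a multiple of the identity, group one eigenspace as $W_1$ and the others as $W_2$. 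This is a nontrivial Hermitian orthogonal splitting with $\Psi=\Psi_1+\Psi_2$, so $\Psi$ is decomposable; by contraposition, indecomposability implies $\mathcal E_{J,x}=\mathbb R\,\omega_{0,x}$.

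\emph{Step 4 (converse).} Suppose $\Psi=\Psi_1+\Psi_2$ relative to a nontrivial splitting $W=W_1\oplus W_2$. Set $A=\mathrm{id}_{W_1}+2\,\mathrm{id}_{W_2}$. On each $W_i$ the twisted forms $\Psi(AX,Y,Z)$, $\Psi(X,AY,Z)$ and $\Psi(X,Y,AZ)$ all equal $i\,\Psi_i$. Every mixed evaluation vanishes, because each $\Psi_i$ kills vectors of $W_{3-i}$. Hence the identity of Step 2 holds. Running Step 2 backwards gives $\mathcal I(h_A)\in\mathcal E_{J,x}$, and this element is not proportional to $\omega_{0,x}$, so rigidity fails.
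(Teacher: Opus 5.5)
Your proposal is correct and follows the paper's proof essentially step for step. You encode the variation by a Hermitian $A$ and reduce admissibility to total skewness of $\Psi(X,Y,AZ)$; a unitary eigenbasis then forces $\Psi$ to be supported in eigenspaces, and the converse uses a block-scalar $A$ with distinct constants. Your Steps 1--2, the type $(3,0)+(0,3)$ bookkeeping and the equivalence with $\Psi(AX,Y,Z)=\Psi(X,AY,Z)=\Psi(X,Y,AZ)$, simply spell out what the paper asserts in one line. The antilinearity concern you flag is harmless, since the eigenvalues of $A$ are real.
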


\begin{proof}
Write an admissible Hermitian variation as
$h(X,Y)=g_0(AX,Y)$, where $A$ is $g_0$-self-adjoint and commutes with $J$.
Choose a unitary eigenbasis of $A$ on $W=T_x^{0,1}M$, say
$Ae_i=\lambda_i e_i$.  The condition that
$h(N_J(\cdot,\cdot),\cdot)$ be alternating is equivalent to the alternation of
\[
(X,Y,Z)\longmapsto \Psi(X,Y,AZ).
\]
Hence, whenever $\Psi_{ijk}:=\Psi(e_i,e_j,e_k)\neq0$, interchange of the
second and third entries gives
\[
\lambda_k\Psi_{ijk}=\lambda_j\Psi_{ijk}.
\]
Cycling the three entries yields
\[
\Psi_{ijk}\neq0\quad\Longrightarrow\quad
\lambda_i=\lambda_j=\lambda_k.
\]
\begin{sloppypar}
Thus every nonzero component of $\Psi$ is supported entirely inside one
eigenspace of $A$.  If $A$ has at least two distinct eigenvalues, the
orthogonal eigenspace decomposition gives a non-trivial orthogonal
decomposition of $\Psi$.
\end{sloppypar}

Conversely, if $\Psi=\Psi_1+\Psi_2$ for an orthogonal splitting
$W=W_1\oplus W_2$, choose distinct real numbers $a\neq b$ and let $A=aI$ on
$W_1$ and $A=bI$ on $W_2$.  Then $\Psi(X,Y,AZ)$ is still alternating, so $A$
defines a non-scalar admissible Hermitian variation.  Therefore the only
admissible Hermitian endomorphisms are scalars exactly when $\Psi$ is
orthogonally indecomposable.
\end{proof}

 A nontrivial kernel of the contraction map $v\mapsto\iota_v\Psi$ already
obstructs \emph{(OE)}, and so does $\Psi=0$ itself.
\begin{prop}
\label{prop:N-kernel-obstructs-OE}
Let $x\in M$ and let
$\Psi=(\widetilde N_{\omega_0})^{0,3}_x$.  If either $\Psi=0$ or the contraction
map
\[
T_x^{0,1}M\longrightarrow\Lambda^2(T_x^{0,1}M)^*,
\qquad v\longmapsto\iota_v\Psi
\]
has nontrivial kernel, then \emph{(OE)} fails at $x$.
Consequently, \emph{(OE)} forces the complex Nijenhuis three-form to be
multisymplectic  at every point, i.e. to have injective contraction map
$v\mapsto\iota_v\Psi$.
\end{prop}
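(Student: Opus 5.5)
The plan is to produce, in each of the two cases, a nonzero $\alpha\in\mathcal E_{J,x}$ and a nonzero covector $\xi$ with $\xi\wedge J^*\xi\wedge\alpha=0$. The description of $\mathcal E_{J,x}$ comes from the proof of Proposition~\ref{prop:algebraic-rigidity-criterion}. A Hermitian variation $h=g_0(A\cdot,\cdot)$, with $A$ self-adjoint and $J$-commuting, is admissible if and only if $(X,Y,Z)\mapsto\Psi(X,Y,AZ)$ is alternating on $W=T^{0,1}_xM$. The corresponding $(1,1)$-form is $\alpha=\mathcal I(h)=g_0(A\cdot,J\cdot)$. The case $n=1$ is degenerate, because $\Lambda^{1,1}$ is spanned by $\omega_0$ and $\xi\wedge J^*\xi\wedge\omega_0$ is a $4$-form on a $2$-manifold. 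So throughout I assume $n\ge2$.

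\emph{Case $\Psi=0$.} Here the alternation condition is vacuous. In real terms $\widetilde N_{\omega_0}$ has type $(3,0)+(0,3)$, so $N_J=0$ at $x$ and $\mathcal B_{N_J}$ vanishes at $x$. Hence $\mathcal E_{J,x}=\Lambda^{1,1}_J$. Take a unitary coframe with $\xi=e^1$, $J^*\xi=e^2$, and set $\alpha=e^{12}$, which is a nonzero real $(1,1)$-form. Then $\xi\wedge J^*\xi\wedge\alpha=e^{12}\wedge e^{12}=0$, so (OE) fails.

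\emph{Case of nontrivial kernel.} Pick $0\ne v\in W$ with $\iota_v\Psi=0$, normalized to unit length. Let $A$ be the Hermitian orthogonal projection of $W$ onto $\mathbb C v$, extended to $TM$ as the corresponding real $J$-commuting self-adjoint endomorphism, i.e.\ the projection onto the $J$-invariant real plane $P$ underlying $v$. I then check that $\Psi(X,Y,AZ)$ is alternating. Writing $AZ=\langle Z,v\rangle v$, we get $\Psi(X,Y,AZ)=\langle Z,v\rangle\Psi(X,Y,v)=0$ by total skew-symmetry and $\iota_v\Psi=0$. Since $0$ is alternating, $A$ is admissible. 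The form $\alpha=\mathcal I(h)$ is then, up to sign, the area form $e^{12}$ of $P$ in a unitary coframe adapted to $P$. Now choose $\xi$ to be the dual covector $e^1$, so that $J^*\xi=\pm e^2$ and $\xi\wedge J^*\xi\wedge\alpha=\pm e^{12}\wedge e^{12}=0$. Because $\alpha\neq0$, (OE) fails at $x$.

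The final sentence of the proposition is the contrapositive of these two cases: if (OE) holds at every point, then $\Psi_x\ne0$ and $v\mapsto\iota_v\Psi_x$ is injective for all $x$. The only step needing care is the translation between complex eigen-data on $T^{0,1}$ and real $(1,1)$-forms, since the conjugation convention affects whether $A$ acts on $W$ or on $\bar W$. The admissibility computation does not depend on that convention, because it only uses $\iota_v\Psi=0$.
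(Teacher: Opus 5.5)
Your proposal is correct and follows the paper's proof essentially verbatim. When $\Psi=0$ the constraint is vacuous, so $\alpha=\xi\wedge J^*\xi$ (your $e^{12}$) violates \emph{(OE)}; in the kernel case the Hermitian projection onto $\mathbb C v$ is admissible, and its $(1,1)$-form is a multiple of $\xi\wedge J^*\xi$ for $\xi$ on the underlying real plane. Your added justification that $\Psi=0$ forces $N_J=0$ at $x$, because $\widetilde N_{\omega_0}$ is real of type $(3,0)+(0,3)$, fills in a step the paper only asserts.
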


\begin{proof}
If $\Psi=0$, the algebraic skew-Nijenhuis constraint imposes no restriction at
$x$, so $\mathcal E_{J,x}=\Lambda_J^{1,1}T_x^*M$ and the form
$\alpha=\xi\wedge J^*\xi$ violates \emph{(OE)}.

Suppose instead that $0\neq v\in\ker\Psi$ and let $K=\mathbb Cv$.  Since
$\Psi$ is alternating, it vanishes whenever one argument lies in $K$.  Let
$A$ be the Hermitian orthogonal projection onto $K$.  Then
$\Psi(X,Y,AZ)=0$ for all $X,Y,Z$, so the Hermitian symmetric tensor
$h(\cdot,\cdot)=g_0(A\cdot,\cdot)$ is algebraically admissible.  The associated
real $(1,1)$-form $\alpha=\mathcal I(h)$ is supported on the real two-plane
underlying $K$.  Choosing a nonzero real covector $\xi$ on that plane gives,
up to a nonzero scalar, $\alpha=\xi\wedge J^*\xi$, and therefore
\[
\xi\wedge J^*\xi\wedge\alpha=0.
\]
Thus \emph{(OE)} fails.
\end{proof}

The ellipticity condition is strictly weaker than Nijenhuis-rigidity.
\begin{prop}
\label{prop:OE-not-rigidity}
In complex dimension six there are algebraically non-rigid skew Nijenhuis forms for
which \emph{(OE)} nevertheless holds.  More precisely, let
\[
W=W_1\oplus W_2,
\qquad \dim_{\mathbb C}W_1=\dim_{\mathbb C}W_2=3,
\]
be an orthogonal Hermitian splitting and let
\[
\Psi=\Psi_1+\Psi_2,
\qquad 0\neq\Psi_i\in\Lambda^3W_i^*.
\]
Then \(\Psi\) is orthogonally decomposable, hence the corresponding algebraic
skew-Nijenhuis constraint is not rigid, while \emph{(OE)} holds.
\end{prop}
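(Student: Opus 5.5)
The plan is to compute the fibre $\mathcal E_{J,x}$ explicitly and then check \emph{(OE)} by decomposing forms according to the splitting $W=W_1\oplus W_2$.

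First, the decomposability claim is immediate from the definition. Each $W_i$ is a proper Hermitian subspace, $\Psi_i\in\Lambda^3W_i^*$, and $\Psi=\Psi_1+\Psi_2$. By Proposition~\ref{prop:algebraic-rigidity-criterion}, $\mathcal E_{J,x}\neq\mathbb R\,\omega_{0,x}$; concretely, $A=aI_{W_1}+bI_{W_2}$ with $a\neq b$ is admissible.

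Next I determine \emph{all} admissible $A$, where $A$ is $g_0$-self-adjoint, commutes with $J$, and makes $\Psi(\cdot,\cdot,A\cdot)$ alternating. Alternation gives
\[
\Psi(AX,Y,Z)=\Psi(X,AY,Z)=\Psi(X,Y,AZ).
\]
\begin{itemize}
\item Take $X,Y\in W_1$ and $Z\in W_2$. Since mixed components of $\Psi$ vanish, $\Psi(AX,Y,Z)=0$, so $\Psi_1(X,Y,(AZ)_{W_1})=0$ for all $X,Y\in W_1$.
\item Since $\dim_{\mathbb C}W_1=3$, $\Psi_1$ is a nonzero complex volume form, so its contraction is injective. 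Hence $(AZ)_{W_1}=0$, and $A$ preserves $W_2$; by symmetry it also preserves $W_1$.
\item On $W_1$, the restriction $A_1$ satisfies the same three equalities. Summing them and using $\Psi_1(A_1X,Y,Z)+\Psi_1(X,A_1Y,Z)+\Psi_1(X,Y,A_1Z)=\operatorname{tr}_{\mathbb C}(A_1)\Psi_1(X,Y,Z)$ on $\Lambda^3$ of a $3$-dimensional space gives $\Psi_1(A_1X,Y,Z)=\tfrac13\operatorname{tr}(A_1)\Psi_1(X,Y,Z)$.
\item Injectivity of contraction then forces $A_1=\tfrac13\operatorname{tr}(A_1)I$, a real scalar because $A_1$ is Hermitian. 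The same holds on $W_2$.
\end{itemize}
Thus
\[
\mathcal E_{J,x}=\{a\,\omega_1+b\,\omega_2:\ a,b\in\mathbb R\},
\]
where $\omega_i$ is the restriction of $\omega_0$ to $W_i$. This has constant rank $2$, so the standing bundle hypothesis holds.

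Now check \emph{(OE)}. Let $0\neq\xi=\xi_1+\xi_2$ with $\xi_i$ in the real dual of $W_i$, and note $J^*$ preserves the splitting. Decompose $\Lambda^4T_x^*M=\bigoplus_{p+q=4}\Lambda^pW_1^*\otimes\Lambda^qW_2^*$ by bidegree, and set $\beta=\xi\wedge J^*\xi\wedge(a\omega_1+b\omega_2)$.
\begin{itemize}
\item The $(4,0)$-component of $\beta$ is $a\,\xi_1\wedge J^*\xi_1\wedge\omega_1$. This is nonzero when $\xi_1\neq0$ and $a\neq0$, by the computation in the proof of Proposition~\ref{prop:rigidity-OE-final} applied in complex dimension $3\ge2$.
\item Symmetrically, the $(0,4)$-component is $b\,\xi_2\wedge J^*\xi_2\wedge\omega_2$.
\end{itemize}
So if $\beta=0$ and both $\xi_i\neq0$, then $a=b=0$.

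If instead $\xi_2=0$, then $\xi_1\neq0$ and the $(4,0)$-component gives $a=0$. The $(2,2)$-component is $b\,\xi_1\wedge J^*\xi_1\wedge\omega_2$, which is a nonzero tensor product when $b\neq0$, so $b=0$. The case $\xi_1=0$ is symmetric. Hence $\alpha=0$ and \emph{(OE)} holds.

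The main obstacle is the middle step: showing that every admissible $A$ is block-scalar, rather than only the diagonal ones produced in the proof of Proposition~\ref{prop:algebraic-rigidity-criterion}. The argument there only controls eigen-supports, so I handle it directly via injectivity of contraction by the volume forms $\Psi_i$ and the trace identity in dimension three.
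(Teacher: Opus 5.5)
Your proof is correct and follows the same route as the paper. You show that every admissible Hermitian endomorphism preserves $W_1$ and $W_2$ and is scalar on each, so that $\mathcal E_{J,x}=\mathbb R\omega_1\oplus\mathbb R\omega_2$, and you then force $a=b=0$ using the $(4,0)$, $(0,4)$ and $(2,2)$ bidegree components of $\xi\wedge J^*\xi\wedge\alpha$.

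You also fill in the detail behind the paper's one-line claim that mixed triples force $A$ to preserve the summands, by using injectivity of contraction with $\Psi_i$. You prove block-scalarity with the trace identity, where the paper simply cites Proposition~\ref{prop:algebraic-rigidity-criterion} on each summand. That citation already suffices, because a Hermitian $A_1$ is unitarily diagonalizable and a nonzero volume form on a complex three-space is orthogonally indecomposable. So the step you flag as the main obstacle is not really one.
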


\begin{proof}
 The alternation condition on mixed triples forces every admissible
Hermitian endomorphism to preserve each summand.
By Proposition~\ref{prop:algebraic-rigidity-criterion}, its restriction to each summand is scalar.
Conversely, every endomorphism that is scalar on each summand
is admissible.

Equivalently,
for the associated real fundamental forms \(\omega_i\) one has
\[
\mathcal E_J=\mathbb R\omega_1\oplus\mathbb R\omega_2.
\]
Let
\[
\alpha=a\omega_1+b\omega_2,
\qquad 0\neq\xi=x+y,
\]
where \(x\in W_{1,\mathbb R}^*\) and \(y\in W_{2,\mathbb R}^*\).  Suppose
\[
\xi\wedge J^*\xi\wedge\alpha=0.
\]
If \(x\neq0\), the component of exterior degree \((4,0)\) with respect to the
splitting \(W_1\oplus W_2\) is
\[
a\,x\wedge J^*x\wedge\omega_1.
\]
Since \(W_1\) has real dimension six and \(\omega_1\) is symplectic, this form is
nonzero unless $a=0$.  Similarly, if $y\neq0$, the $(0,4)$-component forces
$b=0$.  If both $x$ and $y$ are nonzero, this already gives
$a=b=0$.  If, say, $y=0$, then the $(2,2)$-component
$b\,x\wedge J^*x\wedge\omega_2$
is nonzero unless $b=0$; the case $x=0$ is analogous.  Thus $\alpha=0$
for every $0\neq\xi$, proving \emph{(OE)}.
\end{proof}

\begin{remark}
\begin{sloppypar}
Proposition~\ref{prop:OE-not-rigidity}  shows that condition \emph{(OE)} is strictly weaker
than pointwise Nijenhuis rigidity. In particular, \emph{(OE)} does
not force the bundle of algebraically admissible variations to
have rank one.
\end{sloppypar}
\end{remark}

\begin{cor}
\label{cor:dim8-OE-fails}
\begin{sloppypar}
In complex dimension $n=4$, the condition \emph{(OE)} cannot hold at any
point.  In particular, the overdetermined-elliptic finite-dimensional
existence mechanism of Proposition~\ref{prop:overdetermined-elliptic-final}
never applies to a genuinely non-integrable almost-SKT eight-manifold.
\end{sloppypar}
\end{cor}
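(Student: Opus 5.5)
The plan is to reduce the statement to Proposition~\ref{prop:N-kernel-obstructs-OE}. Fix $x\in M$, put $W=T_x^{0,1}M$ (so $\dim_{\mathbb C}W=4$) and $\Psi=(\widetilde N_{\omega_0})^{0,3}_x\in\Lambda^3W^*$. If $\Psi=0$, Proposition~\ref{prop:N-kernel-obstructs-OE} already shows that (OE) fails at $x$. So it suffices to prove that for $\Psi\neq0$ the contraction map $v\mapsto\iota_v\Psi$ has nontrivial kernel.

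The key step is the linear-algebra fact that in complex dimension four every three-form is a contraction of a volume form. Choose a nonzero $\Omega\in\Lambda^4W^*$. The map
\[
W\longrightarrow\Lambda^3W^*,\qquad w\longmapsto\iota_w\Omega,
\]
is injective, because $\Omega$ is a nonzero top-degree form. Both spaces have complex dimension $4=\binom{4}{3}$, so the map is an isomorphism. Hence $\Psi=\iota_w\Omega$ for a unique $w\in W$, and $w\neq0$ since $\Psi\neq0$. Then
\[
\iota_w\Psi=\iota_w\iota_w\Omega=0
\]
by antisymmetry, so $0\neq w$ lies in the kernel of the contraction map. Equivalently, no nonzero three-form on $\mathbb C^4$ is multisymplectic.

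With this, Proposition~\ref{prop:N-kernel-obstructs-OE} gives the failure of (OE) at $x$. Since $x$ was arbitrary, (OE) fails at every point of $M$. For the second assertion, note that Proposition~\ref{prop:overdetermined-elliptic-final} requires (OE) at every point, so it never applies in real dimension eight. This holds whether or not $J$ is integrable: the integrable case is the instance $\Psi\equiv0$.

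The only point needing care is the domain of the contraction map. The map in Proposition~\ref{prop:N-kernel-obstructs-OE} is defined on $T_x^{0,1}M$, and $\Psi$ is a form on that same space. So the vector $w$ produced by the isomorphism lies in the correct space, and no identification via the metric is needed. I do not expect any genuine obstacle beyond this dimension count.
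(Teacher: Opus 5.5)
Your proof is correct and follows the paper's argument: split into the cases $\Psi=0$ and $\Psi\neq0$, and invoke the two cases of Proposition~\ref{prop:N-kernel-obstructs-OE}. The only difference is that you justify, via the isomorphism $w\mapsto\iota_w\Omega$ from $W$ onto $\Lambda^3W^*$, the fact the paper merely asserts: every nonzero three-form on a complex four-dimensional space has nontrivial kernel.
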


\begin{proof}
If $\Psi=0$, apply the first case of
Proposition~\ref{prop:N-kernel-obstructs-OE}.  If $\Psi\neq0$ in complex
dimension four, every three-form has a nontrivial complex one-dimensional
kernel.  The second case of the same proposition applies.
\end{proof}

\begin{cor}[The dimensional picture]\label{cor:dimension-trichotomy}
Let $(M^{2n},J,g)$ be an almost Hermitian manifold and let $\widetilde N$
be its lowered Nijenhuis tensor.  At a point where $\widetilde N$ is
skew-symmetric, the following pointwise statements hold.
\begin{enumerate}
\item If $n\le2$, then $\widetilde N=0$.
\item If $n=3$, every nonzero $\widetilde N$ is Nijenhuis-rigid.
\item If $n=4$, no nonzero $\widetilde N$ is Nijenhuis-rigid, and \emph{(OE)} fails identically.
\item If $n\ge5$, orthogonally indecomposable complex $3$-forms form an open
and dense subset of $\Lambda^3W^*$, with  $W:=T_x^{0,1}M$.   For a fixed splitting type
$k+(n-k)$, the corresponding decomposable locus has real codimension at
least
\[
k(n-k)(n-4).
\]
\end{enumerate}
\end{cor}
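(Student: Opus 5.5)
The plan is to treat all four items at a single point $x$, working with $\Psi=(\widetilde N)^{0,3}\in\Lambda^3W^*$, $W=T_x^{0,1}M$. The starting fact is that a skew lowered Nijenhuis tensor is of type $(3,0)+(0,3)$, since $N_J$ maps $T^{1,0}\times T^{1,0}$ into $T^{0,1}$. Hence $\widetilde N$ is determined by $\Psi$, and $\widetilde N\neq0$ if and only if $\Psi\neq0$.

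For (1): when $n\le2$ we have $\Lambda^3W^*=0$, so $\Psi=0$ and therefore $\widetilde N=0$.

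For (2): when $n=3$, $\Lambda^3W^*$ is one-dimensional, spanned by a volume form. Suppose $\Psi\neq0$ admitted a nontrivial orthogonal splitting $W=W_1\oplus W_2$ with $\Psi=\Psi_1+\Psi_2$. Each $W_i$ has complex dimension at most $2$, so $\Lambda^3W_i^*=0$ and $\Psi=0$, a contradiction. Thus $\Psi$ is orthogonally indecomposable, and Proposition~\ref{prop:algebraic-rigidity-criterion} gives $\mathcal E_{J,x}=\mathbb R\omega_{0,x}$.

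For (3): when $n=4$, every nonzero $\Psi\in\Lambda^3W^*$ has a one-dimensional kernel, via the Hodge-type identification $\Lambda^3W^*\cong W$. Let $K$ be this kernel. Then $W=K^\perp\oplus K$ is an orthogonal splitting, and $\Psi$ restricts to a form in $\Lambda^3(K^\perp)^*$, with $\Psi_2=0$. So $\Psi$ is decomposable, and by Proposition~\ref{prop:algebraic-rigidity-criterion} it is not rigid. The failure of (OE) is exactly Corollary~\ref{cor:dim8-OE-fails}.

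For (4), I would begin with a dimension count. Fix $k$ with $1\le k\le n-1$. The decomposable locus of type $k+(n-k)$ is the image of the map
\[
U(n)\times_{U(k)\times U(n-k)}\bigl(\Lambda^3\mathbb C^{k*}\oplus\Lambda^3\mathbb C^{n-k*}\bigr)\to\Lambda^3W^*.
\]
The Grassmannian has real dimension $2k(n-k)$. The real dimension of the fibre is $2\binom k3+2\binom{n-k}3$. The target has real dimension $2\binom n3$. By the Vandermonde identity,
\[
\binom n3-\binom k3-\binom{n-k}3=k(n-k)\tfrac{n-2}{2}\cdot\text{(adjusted)},
\]
and more precisely it equals $\binom k2(n-k)+k\binom{n-k}2=\tfrac12k(n-k)(n-2)$. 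The real codimension is therefore at least
\[
k(n-k)(n-2)-2k(n-k)=k(n-k)(n-4),
\]
which is positive for $n\ge5$.

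Density then follows, because the image of a smooth map from a manifold of strictly smaller dimension has empty interior. Since there are finitely many $k$, the union of these loci also has empty interior. For openness, the decomposable locus is the image of a proper map: the Grassmannian is compact and the fibre is a linear cone. Concretely, if $\Psi_m\to\Psi$ with $\Psi_m$ decomposable, pass to a subsequence on which the splitting type and the subspaces converge. The components also converge, since they are orthogonal projections of $\Psi_m$. So the decomposable locus is closed, and its complement, the indecomposable locus, is open.

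The main obstacle is making the codimension bound and the properness argument rigorous. The bound must come from an upper estimate on the dimension of the image, not from assuming the map is generically injective; an upper estimate is all that "at least" requires. For properness, the key step is that the limit of the splitting subspaces gives a genuine nontrivial splitting, which holds because $k$ is fixed along the subsequence.
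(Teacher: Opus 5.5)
Your proposal is correct and follows the paper's proof essentially step by step. You use the same vanishing, volume-form and kernel arguments for $n\le 4$, and for $n\ge 5$ the same Grassmannian-plus-fibre parameter count giving codimension at least $k(n-k)(n-4)$; your sequential-compactness proof of closedness is the paper's compact sphere-bundle argument in different words, and your explicit Vandermonde computation fills in the identity the paper only states. Delete the stray ``(adjusted)'' display, since the computation that follows it is the correct one.
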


\begin{proof}
The first assertion follows from $\Lambda^3W^*=0$ for $n\le2$.  If $n=3$, a
nonzero $3$-form is a complex volume form and cannot split over a non-trivial
orthogonal decomposition, proving the second assertion by
Proposition~\ref{prop:algebraic-rigidity-criterion}.

If $n=4$, every nonzero $\Psi\in\Lambda^3W^*$ has a complex one-dimensional
kernel $K$.  With respect to the Hermitian splitting
$W=K^\perp\oplus K$ one has $\Psi\in\Lambda^3(K^\perp)^*$, so $\Psi$ is
orthogonally decomposable.

 Finally, fix $n\geq 5$ and $1\leq k\leq n-1$.
Orthogonal splittings $W=W_1\oplus W_2$ with
$\dim_{\mathbb C}W_1=k$ are parametrized by the complex
Grassmannian $\operatorname{Gr}_{\mathbb C}(k,W)$, of real
dimension $2k(n-k)$. For each splitting, the space of forms
$\Psi_1+\Psi_2$, with $\Psi_i\in\Lambda^3W_i^*$,
has real dimension
\[
2\binom{k}{3}+2\binom{n-k}{3}.
\]
Thus the corresponding decomposable locus has real codimension
at least
\[
2\binom{n}{3}
-2\binom{k}{3}
-2\binom{n-k}{3}
-2k(n-k)
=
k(n-k)(n-4)>0.
\]
For each splitting type, the decomposable locus on the unit
sphere is the image of a compact sphere bundle and is therefore
closed. Taking the finite union over all splitting types,
we conclude that the indecomposable locus is open and dense.
\end{proof}

\begin{remark}
\label{rem:nonzero-not-enough-final}
The six-dimensional conclusion is special.  In complex dimension $n\ge4$, a
nonzero skew Nijenhuis form need not determine the Hermitian metric up to
scale.  Algebraically, let
$\Psi=dz^1\wedge dz^2\wedge dz^3$ on $\mathbb C^n$.  Its kernel contains the
complex subspace spanned by $\partial_{z^4},\ldots,\partial_{z^n}$.  Hermitian
variations supported on that kernel can preserve the skew condition.  If $\xi$ is a nonzero real covector whose metric dual satisfies
$\iota_{\xi^\sharp}\Psi=0$, one can find a nonzero
such variation $\alpha$ with
\[
\xi\wedge J^*\xi\wedge\alpha=0.
\]
Thus \emph{(OE)} can fail even though the skew Nijenhuis $3$-form is nonzero.
Accordingly, in higher dimension the clean general hypothesis is the symbol
condition \emph{(OE)}, or a stronger algebraic condition such as
Nijenhuis-rigidity, rather than merely $N_J\neq0$.
\end{remark}

\subsection{The six-dimensional non-integrable case}

In real dimension six the Nijenhuis-rigidity condition is automatic wherever
the skew Nijenhuis $3$-form is nonzero.

\begin{prop}[Pointwise six-dimensional rigidity]
\label{lem:dimension-six-rigidity-final}
Let $(V^6,J,g)$ be a Hermitian vector space of real dimension six and let $N$ have the algebraic type
of a Nijenhuis tensor.  Assume
$\widetilde N(X,Y,Z):=g(N(X,Y),Z)$
is a nonzero $3$-form.  If $h$ is a $J$-invariant symmetric bilinear form such
that $h(N(\cdot,\cdot),\cdot)$ is also a $3$-form, then
$h=\lambda g$
for some $\lambda\in\mathbb R$.
\end{prop}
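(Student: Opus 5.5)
The plan is to reduce to the algebraic criterion of Proposition~\ref{prop:algebraic-rigidity-criterion}, or equivalently to rerun its eigenvalue argument in complex dimension three. First I would recall what "algebraic type of a Nijenhuis tensor" gives us. Such an $N$ is $J$-anti-linear in each argument, $N(JX,Y)=-JN(X,Y)$. Together with skew-symmetry of $\widetilde N$ and $g$-compatibility of $J$, this forces $\widetilde N$ to be of type $(3,0)+(0,3)$. Hence $\widetilde N=2\operatorname{Re}\Psi$ with $\Psi=\widetilde N^{0,3}\in\Lambda^3W^*$, where $W=V^{0,1}$ (or $V^{1,0}$, depending on convention). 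Since $\widetilde N\neq0$, we have $\Psi\neq0$.

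Next, I would write $h=g(A\cdot,\cdot)$ with $A$ self-adjoint and commuting with $J$. Then $A$ preserves the type decomposition and acts on $W$ as a Hermitian endomorphism. I would diagonalize it in a unitary basis $e_1,e_2,e_3$ of $W$ with real eigenvalues $\lambda_1,\lambda_2,\lambda_3$. The hypothesis that $h(N(\cdot,\cdot),\cdot)$ is alternating is equivalent to alternation of $(X,Y,Z)\mapsto\Psi(X,Y,AZ)$ on $W$, as in the proof of Proposition~\ref{prop:algebraic-rigidity-criterion}. This requires checking that the $(3,0)$ and $(0,3)$ parts decouple, which follows from anti-linearity and the reality of $h$. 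In complex dimension three, $\Lambda^3W^*$ is one-dimensional, so $\Psi=c\,e^1\wedge e^2\wedge e^3$ with $c\neq0$, and in particular $\Psi_{123}\neq0$. Exchanging the last two slots gives $\lambda_3\Psi_{123}=\lambda_2\Psi_{123}$, and cycling gives $\lambda_1=\lambda_2=\lambda_3=:\lambda$. Thus $A=\lambda\,\mathrm{Id}$ on $W$. By reality and $J$-invariance, $A=\lambda\,\mathrm{Id}$ on $V$, so $h=\lambda g$.

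Alternatively, the result follows immediately from Corollary~\ref{cor:dimension-trichotomy}(2) together with Proposition~\ref{prop:algebraic-rigidity-criterion}: a nonzero complex volume form on $\mathbb C^3$ cannot split over a nontrivial orthogonal decomposition, because each summand $W_i$ has complex dimension at most two and so $\Lambda^3W_i^*=0$. I would still include the direct eigenvalue argument so that the proof is self-contained.

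The main obstacle I expect is the type bookkeeping in the first step. One must verify that skew-symmetry of $g(N(X,Y),AZ)$ on the real space is exactly equivalent to the alternation condition for $\Psi$ with $A$ restricted to $W$. This includes checking that no mixed-type components survive, since $N$ maps $V^{1,0}\times V^{1,0}$ into $V^{0,1}$ and $g$ pairs $V^{0,1}$ only with $V^{1,0}$. I would handle this by complexifying, inserting $X,Y,Z\in V^{1,0}$ or $V^{0,1}$, and noting that $A$ preserves each type.
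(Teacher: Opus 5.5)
Your proposal is correct and takes essentially the same route as the paper. You complexify, isolate the nonzero $(0,3)$-part $\Psi$ as a complex volume form on $V^{0,1}$, diagonalize $A$ in a unitary eigenbasis, and use $\lambda_3\Psi_{123}=\lambda_2\Psi_{123}$ together with cycling to force $A=\lambda\,\mathrm{Id}$; your extra type bookkeeping (vanishing of $N$ on mixed pairs and decoupling of the $(3,0)$ and $(0,3)$ parts) only spells out what the paper's proof states in one line.
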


\begin{proof}
Complexify and restrict to $V^{0,1}$.  The Nijenhuis tensor maps
$\Lambda^2V^{0,1}$ to $V^{1,0}$, and the Hermitian metric pairs
$V^{1,0}$ with $V^{0,1}$.  Thus the complexification of $\widetilde N$ has a
$(0,3)$ component $\Psi$.  Since $\dim_{\mathbb C}V^{0,1}=3$ and
$\widetilde N\neq0$, $\Psi$ is a nonzero complex volume form.

Write $h(X,Y)=g(AX,Y)$, where $A$ is $g$-self-adjoint and commutes with $J$.
Choose a unitary eigenbasis $e_1,e_2,e_3$ of $V^{0,1}$,
$Ae_i=\lambda_i e_i$.  The condition that
$h(N(\cdot,\cdot),\cdot)$ is alternating says that
$\Psi(X,Y,AZ)$
is alternating in $X,Y,Z$.  Since
$\Psi(e_1,e_2,e_3)\neq0$, interchange of the second and third slots gives
\[
\lambda_3\Psi(e_1,e_2,e_3)
=-\lambda_2\Psi(e_1,e_3,e_2)
=\lambda_2\Psi(e_1,e_2,e_3),
\]
so $\lambda_2=\lambda_3$.  Cycling the indices gives
$\lambda_1=\lambda_2=\lambda_3$.  Hence $A=\lambda\operatorname{Id}$ and
$h=\lambda g$.
\end{proof}

\begin{remark}
\begin{sloppypar}
The rigidity statement above is compatible with Verbitsky's uniqueness
phenomenon for nearly K\"ahler metrics on an almost complex six-manifold with
nondegenerate Nijenhuis tensor: in that more restrictive nearly K\"ahler
setting the compatible metric is unique up to scale
\cite{Verbitsky2008}.  Proposition~\ref{lem:dimension-six-rigidity-final}
uses only the algebraic skew-Nijenhuis condition and is therefore independent
of the nearly K\"ahler differential equations.
\end{sloppypar}
\end{remark}

 The  condition $dT^B=0$ imposes a further rigidity phenomenon in
real dimension six: either the structure is integrable or the Nijenhuis
tensor is nowhere vanishing.

\begin{thm} \label{thr3.16-dimsix}
Let $(M^6,J,g)$ be an almost Hermitian manifold such that the
lowered Nijenhuis tensor
$
\widetilde N(X,Y,Z)=g(N_J(X,Y),Z)
$
is a three-form and the characteristic torsion satisfies
$dT^B=0.$

Then exactly one of the following alternatives occurs:
\begin{enumerate}
\item $\widetilde N\equiv0$, and hence $J$ is integrable;
\item $\widetilde N$ is nowhere vanishing and has constant pointwise norm.
In this case $\rho^B=0.$
\end{enumerate}
In particular, every  non-integrable almost-SKT six-manifold is
automatically almost-CYT. Moreover, in the non-integrable case the
canonical bundle is trivial.
\end{thm}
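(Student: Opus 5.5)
The strategy is to use the $SU(3)$-type reduction available in dimension six. Away from the zero set of $\widetilde N$, the $(0,3)$-part $\Psi=(\widetilde N)^{0,3}$ is a nonzero complex volume form on $T^{0,1}M$, as in the proof of Proposition~\ref{lem:dimension-six-rigidity-final}. This makes $\Psi$ a nowhere-vanishing section of $\overline{K}$ on the open set $U=\{\widetilde N\neq0\}$. The three claims are constant norm, $\rho^B=0$, and triviality of the canonical bundle. All three should follow from one statement: $\nabla^B\widetilde N=0$, or at least $\nabla^B\Psi=0$ up to a term we can control.

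\textbf{Step 1: the characteristic connection preserves the Nijenhuis tensor.} For a $\mathcal G_1$ structure, the Friedrich--Ivanov construction writes $\widetilde N$ as the $(3,0)+(0,3)$ part of $T^B$. Indeed, $d^c\omega$ has type $(2,1)+(1,2)$ only up to a Nijenhuis contribution, and the $(3,0)+(0,3)$ part of $T^B$ is a fixed multiple of $\widetilde N$. Since $\nabla^B$ preserves $J$ and $g$, it preserves type decompositions. Hence $\nabla^B$ applied to $T^B$ and then projected to type $(3,0)+(0,3)$ gives $c\,\nabla^B\widetilde N$.

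I would then combine $dT^B=0$ with the standard identity expressing $dT$ through $\nabla$ for a connection with skew torsion:
\[
dT^B=\mathfrak a(\nabla^B T^B)+2\sigma_{T^B},
\]
where $\mathfrak a$ is alternation and $\sigma_{T^B}$ is quadratic in $T^B$. The goal is to deduce $\nabla^B\Psi=\theta\otimes\Psi$ on $U$ with $\theta$ purely imaginary; equivalently, $|\Psi|$ is $\nabla^B$-parallel. The route is to take the $(4,0)$- and $(3,1)$-type components of this identity and use that in complex dimension three $\Lambda^{4,0}=0$ and $\Lambda^{3,1}\cong\Lambda^{1,0}\otimes\Lambda^{3,0}$. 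Then $(dT^B)^{3,1}=0$ should yield $\bar\partial$-type information on $\Psi$. I expect this step to be the main obstacle: the quadratic term $\sigma_{T^B}$ mixes $\widetilde N$ with the $(2,1)$-torsion, and one must check that these terms cancel or fit the form $\theta\otimes\Psi$. The fallback is the Bianchi identity for $R^B$ in the case $dT^B=0$, contracted against $\Psi$.

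\textbf{Step 2: constant norm and the dichotomy.} Once $d|\widetilde N|^2=0$ on $U$, the function $|\widetilde N|^2$ is locally constant on $U$. By continuity it takes the same constant on $\overline U$. That constant is nonzero, while $\widetilde N=0$ on $\partial U$, so $\partial U=\emptyset$. Since $M$ is connected, either $U=\emptyset$, which gives alternative (1) with integrability by Newlander--Nirenberg, or $U=M$ with constant norm, which gives alternative (2).

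\textbf{Step 3: vanishing of $\rho^B$ and triviality of $K$.} In case (2), $\Psi$ is a global nowhere-vanishing section of $\overline K$, so $K$ is trivial. Because $\nabla^B\Psi=\theta\otimes\Psi$ with $|\Psi|$ constant, the connection induced by $\nabla^B$ on $\overline K$ has curvature $d\theta$. Its curvature is also $\pm i\rho^B$. To show $\theta=0$, or at least $d\theta=0$ with $\rho^B=0$, I would extract $\theta$ from the full $(3,1)+(2,2)$ components of $dT^B=0$, aiming to show that closedness of the torsion forces the full holonomy of $\nabla^B$ into $SU(3)$, in the manner of nearly Kähler geometry. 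Failing a direct argument, I would compute $\rho^B$ via the formula $\rho^B(X,Y)=\frac12R^B(X,Y,e_a,Je_a)$. The pair symmetry of $R^B$ holds up to $dT^B$ when $dT^B=0$, which turns $\rho^B$ into a Ricci-type contraction. That contraction, evaluated on the parallel $SU(3)$-structure $(\omega,\Psi)$, should vanish by the standard $SU(3)$-holonomy argument. The statement that every non-integrable almost-SKT six-manifold is almost-CYT then follows immediately.
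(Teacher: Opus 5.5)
Your outline matches the paper on the routine parts. On $U=\{\widetilde N\neq0\}$ the $(0,3)$-part $\Psi$ of $\widetilde N$ is a nowhere-vanishing section of $\overline K$, which trivializes $K$ once $U=M$. Constancy of $|\widetilde N|$ is read off from the $(3,1)+(1,3)$ component of $dT^B$, which is the paper's $\Lambda^4_6$-component. An open--closed argument then gives the dichotomy; run it on a single component $U_0$, as the paper does, since different components may carry different constants and ``the same constant on $\overline U$'' is not justified as written.

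Step 1, however, is only a plan. The relation $\nabla^B\Psi=\theta\otimes\Psi$ is automatic, because $\nabla^B$ preserves $J$ and $\Lambda^{0,3}$ is a line bundle. Also $\operatorname{Re}\theta=d\log|\Psi|$, because $\nabla^B$ is metric. So your target is exactly $d|\widetilde N|=0$, and all the content lies in the cancellation of quadratic torsion terms, which you leave open. The paper obtains it by fixing the phase of $\Omega$ so that $\widetilde N=2\pi_0\Omega^+$ (that is, $\sigma_0=0$, $\pi_0>0$). It then uses the Fino--Fowdar formula $(dT^B)_{\Lambda^4_6}=(-2\,d\pi_0+4\sigma_0J\nu_1-2\sigma_0J\pi_1)\wedge\Omega^+$, in the form obtained with their identity (36), which reduces to $-2\,d\pi_0\wedge\Omega^+$.

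The genuine gap is $\rho^B=0$. Constant norm gives only $\operatorname{Re}\theta=0$, whereas $\rho^B$ is, up to a constant factor, $d(\operatorname{Im}\theta)$. Nothing in your Steps 1--2 controls $\operatorname{Im}\theta$. The missing ingredient is not another component of $dT^B$. It is a structural identity valid for every $\mathcal G_1$ $SU(3)$-structure, namely (36) of Fino--Fowdar:
\[
Jd\sigma_0-d\pi_0-\pi_0(2\nu_1-\pi_1)+\sigma_0J(2\nu_1-\pi_1)=0 .
\]
This identity comes from $d(d\Omega)=0$ and is independent of $dT^B=0$. With $\sigma_0=0$, $d\pi_0=0$ and $\pi_0>0$ it forces $\pi_1=2\nu_1$, and Corollary~4.8 of Fino--Fowdar then gives $\rho^B=0$.

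The routes you suggest cannot replace this identity. The only $1$-form-valued part of $dT^B$ is its $\Lambda^4_6$-component, which is already used up in getting $d\pi_0=0$. The $(2,2)$-part lies in $\Lambda^4_1\oplus\Lambda^4_8$ and carries no such information. Your fallback also fails, for two reasons. First, in the pair-symmetry defect of $R^B$ the $4$-form part $dT^B-\sigma_{T^B}$ of the Bianchi defect cancels out, and what remains is governed by $\nabla^BT^B$. Closed torsion only gives $R^{+}(X,Y,Z,W)=R^{-}(Z,W,X,Y)$ for the connections with torsion $\pm T^B$. Second, evaluating on ``the parallel $SU(3)$-structure $(\omega,\Psi)$'' presupposes $\theta=0$, which is stronger than the conclusion $\rho^B=0$ you are trying to reach.
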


\begin{proof}
Set
\[
U:=\{x\in M:\widetilde N_x\neq0\}.
\]
Suppose that $U\neq\varnothing$, and let $U_0$ be a connected component
of $U$.

On $U_0$, the skew Nijenhuis form has type $(3,0)+(0,3)$. Hence its
$(3,0)$-part
$\Psi:=(\widetilde N)^{3,0}$
is a nowhere vanishing smooth section of the canonical bundle
$K_J=\Lambda^{3,0}T^*M$
over $U_0$. After normalization with respect to $g$, $\Psi$ determines
an $\mathrm{SU}(3)$-structure $(\omega,\Omega)$ compatible with $(g,J)$.
We choose $\Omega$ so that $\widetilde N$ is a positive multiple of
$\Omega^+$.
With our convention for the Nijenhuis tensor, Proposition~4.1 of
\cite{FinoFowdar2026} reads
\[
\widetilde N=2\pi_0\Omega^++2\sigma_0\Omega^-.
\]
By the choice of $\Omega$, it follows that
\[
\sigma_0=0,\qquad \pi_0>0,
\]
and hence
$\widetilde N=2\pi_0\Omega^+.$
We note that in \cite{FinoFowdar2026} the characteristic torsion is denoted
by $T_\omega$, and the opposite sign convention is used for the Nijenhuis
three-form.

We next use the condition $dT^B=0$.
 By the $\Lambda^4_6$-component
of the formula for $dT^B$ in
\cite[Theorem~4.4]{FinoFowdar2026}, in the equivalent form obtained
using \cite[(36)]{FinoFowdar2026}, one has
\[
(dT^B)_{\Lambda^4_6}
=
\bigl(
-2\,d\pi_0
+4\sigma_0J\nu_1
-2\sigma_0J\pi_1
\bigr)\wedge\Omega^+.
\]
Since $dT^B=0$ and $\sigma_0=0$, it follows that
\[
d\pi_0\wedge\Omega^+=0.
\]
The $\mathrm{SU}(3)$-equivariant map
\[
\Lambda^1\longrightarrow\Lambda^4_6,
\quad
\alpha\longmapsto\alpha\wedge\Omega^+,
\]
is injective. Therefore
$d\pi_0=0$
on $U_0$.

On the other hand, identity~(36) of \cite{FinoFowdar2026} gives
\[
Jd\sigma_0-d\pi_0
-\pi_0(2\nu_1-\pi_1)
+\sigma_0J(2\nu_1-\pi_1)=0.
\]
Since $\sigma_0=0$ identically on $U_0$, we also have
$d\sigma_0=0$. Together with $d\pi_0=0$, the preceding identity reduces
to
\[
\pi_0(2\nu_1-\pi_1)=0.
\]
As $\pi_0>0$ on $U_0$, we conclude that
$\pi_1=2\nu_1.$
Corollary~4.8 of \cite{FinoFowdar2026} then yields
$\rho^B=0$
on $U_0$.

Since $U_0$ is connected and $d\pi_0=0$, the function $\pi_0$ is a
positive constant on $U_0$. Moreover,
\[
\widetilde N=2\pi_0\Omega^+,
\]
and $\Omega^+$ has constant pointwise norm with respect to the normalized
$\mathrm{SU}(3)$-structure. Hence $|\widetilde N|$ is a positive
constant on $U_0$; denote it by $c>0$.

It remains to show that $U_0=M$. Let $x\in\overline{U_0}$. Since
$|\widetilde N|=c$ on $U_0$, by continuity
\[
|\widetilde N_x|=c>0,
\]
so $x\in U$. Since $U$ is open, there exists a connected neighborhood
$V$ of $x$ contained in $U$. As $x\in\overline{U_0}$, we have
$V\cap U_0\neq\varnothing$. Hence $V$ and $U_0$ lie in the same
connected component of $U$, and therefore $V\subset U_0$. In particular,
$x\in U_0$. Thus $U_0$ is closed in $M$. Since it is also open and
$M$ is connected, $U_0=M$.

We have therefore proved that either $U=\varnothing$, in which case
$\widetilde N\equiv0$ and $J$ is integrable, or $U=M$, in which case
$\widetilde N$ is nowhere vanishing, has constant norm, and
$\rho^B=0.$

Finally, in the latter case
$\Psi=(\widetilde N)^{3,0}$
is a nowhere vanishing global section of the canonical bundle $K_J$.
Hence $K_J$ is trivial.
\end{proof}

As a consequence, every solution of the constrained Bismut--Ricci flow on a compact non-integrable almost-SKT six-manifold is necessarily stationary.

\begin{cor} \label{cor3.17-dimsix}
Let $(M^6,J,\omega_0)$ be compact and almost-SKT, and assume
that $J$ is non-integrable. Then
\[
\mathcal A_J=\{\lambda\omega_0:\lambda>0\},
\qquad
\mathcal V_J=\mathbb R\omega_0,
\]
and $\rho^B_{\omega_0}=0.$

Consequently, the solution of the constrained Bismut--Ricci flow is stationary:
\[
\omega(t)\equiv\omega_0,
\qquad -\infty<t<\infty.
\]
In particular, every $J$-compatible almost-SKT metric has fundamental
form $\lambda\omega_0$ for some constant $\lambda>0$ and has vanishing
Bismut Ricci form.

\end{cor}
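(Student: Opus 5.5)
The plan is to combine Theorem~\ref{thr3.16-dimsix} with the rigidity results of the previous subsections. Since $J$ is non-integrable, Theorem~\ref{thr3.16-dimsix} puts us in alternative (2). Thus $\widetilde N_{\omega_0}$ is nowhere vanishing on $M$, and $\rho^B_{\omega_0}=0$ identically. At each point $x$ the $(0,3)$-part $\Psi_x$ is then a nonzero complex volume form on the three-dimensional space $T^{0,1}_xM$. By Proposition~\ref{lem:dimension-six-rigidity-final}, every $J$-invariant $h$ with $h(N_J(\cdot,\cdot),\cdot)$ alternating is a multiple of $g_0$. Equivalently, $\ker\mathcal B_{N_J}$ is the real line spanned by $g_0$ at every point; one could also invoke Corollary~\ref{cor:dimension-trichotomy}(2).

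Next I would check the standing hypotheses. The pointwise rank of $\mathcal B_{N_J}$ is the constant $\dim S^2_J-1$, so $\mathcal E_J=\mathbb R\omega_0$ is a smooth line bundle. Hence $(J,\omega_0)$ is Nijenhuis-rigid in the sense of \eqref{eq:N-rigid-final}. Since $M$ is compact and $dT^B_{\omega_0}=0$, Proposition~\ref{prop:rigid-kernel-final} gives $\mathscr V_J=\mathbb R\omega_0$, and Theorem~\ref{thm:global-rigid-moduli} gives $\mathcal A_J=\{\lambda\omega_0:\lambda>0\}$. Any $J$-compatible almost-SKT metric is therefore $\lambda\omega_0$. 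Its Bismut Ricci form vanishes either by reapplying Theorem~\ref{thr3.16-dimsix} to $(J,\lambda\omega_0)$, or by noting, as in the proof of Theorem~\ref{thm:rigid-complete-dynamics}, that the characteristic connection is invariant under constant homotheties.

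For the flow, Theorem~\ref{thm:rigid-complete-dynamics} gives the solution $(1+\mu_0t)\omega_0$. Here $\mu_0$ is defined in \eqref{eq:rigid-mu-all-dim} as an integral of $\langle(\rho^B_{\omega_0})^{1,1},\omega_0\rangle$, so $\rho^B_{\omega_0}=0$ forces $\mu_0=0$. The solution is therefore $\omega(t)\equiv\omega_0$. By \eqref{eq:rigid-maximal-interval}, its maximal interval is all of $\mathbb R$. One can also see stationarity directly: the right-hand side of \eqref{eq:Bismut-Ricci-constrained-final} vanishes at $\omega_0$, and uniqueness of ODE solutions on the one-dimensional space $\mathscr V_J$ gives $\omega(t)\equiv\omega_0$.

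The main point requiring care is the constant-rank and smoothness of $\mathcal E_J$. This is exactly where nowhere-vanishing of $\widetilde N$ from Theorem~\ref{thr3.16-dimsix} is essential: without it, points with $\widetilde N_x=0$ would make the rank jump to the full $\dim\Lambda^{1,1}$. Everything else is a direct assembly of earlier results.
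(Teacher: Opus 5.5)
Your proposal is correct and follows the paper's proof essentially step by step. Theorem~\ref{thr3.16-dimsix} gives a nowhere-vanishing $\widetilde N_{\omega_0}$ and $\rho^B_{\omega_0}=0$; Proposition~\ref{lem:dimension-six-rigidity-final} then gives pointwise rigidity, Proposition~\ref{prop:rigid-kernel-final} and Theorem~\ref{thm:global-rigid-moduli} give $\mathscr V_J$ and $\mathcal A_J$, and Theorem~\ref{thm:rigid-complete-dynamics} with $\mu_0=0$ gives stationarity. Your explicit checks of the constant rank of $\mathcal B_{N_J}$ and of $\rho^B_{\lambda\omega_0}=0$ supply details that the paper leaves implicit.
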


\begin{proof}
By Theorem \ref{thr3.16-dimsix}, non-integrability implies that
$\widetilde N_{\omega_0}$ is nowhere vanishing and that
$\rho^B_{\omega_0}=0.$

Proposition~\ref{lem:dimension-six-rigidity-final} therefore gives
pointwise Nijenhuis rigidity. Proposition~\ref{prop:rigid-kernel-final} then yields
$\mathcal V_J=\mathbb R\omega_0,$
while Theorem~\ref{thm:global-rigid-moduli} gives
\[
\mathcal A_J=\{\lambda\omega_0:\lambda>0\}.
\]
Theorem~\ref{thm:rigid-complete-dynamics} then applies. Since
$\rho^B_{\omega_0}=0$, the constant $\mu_0$ defined in \eqref{eq:rigid-mu-all-dim}  vanishes.
Therefore the solution \eqref{eq:rigid-homothetic-all-dim}  reduces to $\omega(t)=\omega_0$
for all $t\in\mathbb R$.
\end{proof}

 \begin{remark}
\label{rem:scope-OE}
In real dimension six, Theorem~\ref{thr3.16-dimsix} shows that there is no
intermediate non-integrable almost-SKT regime in which the skew
Nijenhuis form vanishes on a proper subset. Indeed, if $J$ is
non-integrable, then $\widetilde N_{\omega_0}$ is nowhere vanishing.
Hence Proposition~\ref{lem:dimension-six-rigidity-final} gives
pointwise Nijenhuis rigidity, and therefore \emph{(OE)} holds
everywhere.

Thus, on a compact non-integrable almost-SKT six-manifold,
Proposition~\ref{thm:Bismut-short-time-final} applies, while
Corollary  \ref{cor3.17-dimsix}  shows in fact that the constrained Bismut--Ricci
flow solution is stationary.

In higher dimensions, by contrast, \emph{(OE)} may fail even when the
skew Nijenhuis tensor is nonzero. In that case the admissible space
$\mathscr V_J$ may be infinite dimensional and the finite-dimensional
argument above no longer applies. A parabolic or Hodge-type analysis of
the constrained problem would then be required, as in the pluriclosed
case of Remark~\ref{rem:pluriclosed-limit}.
\end{remark}
\begin{remark}\label{ex:S3S3-strong-ABHE}
In \cite[Example~5.8]{FinoFowdar2026}, a left-invariant $SU(3)$-structure
on $S^3\times S^3$ is constructed for which the Nijenhuis tensor is a
nonzero three-form, the characteristic torsion is closed, and in fact the
characteristic connection is flat.  Hence
\[
\widetilde N\neq0,\qquad dT^B=0,\qquad \rho^B=0.
\]
The structure is therefore a concrete non-integrable example satisfying
all three conditions above.  Since it is left invariant and $\widetilde N$
is nonzero, the Nijenhuis form is nowhere vanishing, so
Corollary~\ref{cor3.17-dimsix} applies.  The
constrained Bismut--Ricci flow is stationary because $\rho^B=0$.
\end{remark}

\begin{remark}
\label{rem:Schoemann-parallel-strong-flat}
There is a structural reason why several natural homogeneous six-dimensional
examples with nonzero skew Nijenhuis tensor and closed characteristic torsion turn
out to be Bismut-flat.  Schoemann \cite{Schoemann2007} studies almost Hermitian six-manifolds with
parallel characteristic torsion.  In the class of type \(\mathcal W_1\oplus \mathcal W_3\) related
to \(SO(3)\), the characteristic curvature and the exterior derivative of the
torsion are controlled by the same scalar
\[
\lambda=\|T_2\|^2-\frac13\|T_{12}\|^2:
\qquad
R^B=-\lambda\,\operatorname{pr}_{\mathfrak{so}(3)},
\qquad
dT^B=\lambda\,*\omega,
\]
see \cite[Lemma~4.1 and Proposition~4.6]{Schoemann2007}.  Consequently, for this class of structures of type $\mathcal W_1\oplus \mathcal W_3$ with parallel characteristic torsion and $SO(3)$-holonomy,
\[
dT^B=0\quad\Longleftrightarrow\quad R^B=0.
\]
\begin{sloppypar}
In particular, the naturally reductive \(S^3\times S^3\) families of
\cite[Example~4.10]{Schoemann2007} cannot provide a strong non-integrable
example with non-flat characteristic connection.  This motivates looking beyond
the parallel-torsion $SO(3)$-holonomy setting.
\end{sloppypar}
\end{remark}

\section{Almost Bismut--Hermitian--Einstein structures and generalized Ricci flow}

This section relates almost-SKT geometry to the  generalized Ricci flow.  We
introduce the almost Bismut Hermitian--Einstein (ABHE) condition 
and study its relation to generalized Ricci solitons and fixed points.

  We recall that, with the convention used here, the generalized Ricci flow on a manifold $M$ is a one-parameter family of Riemannian metrics $g(t)$ and closed three-forms $H(t)\in\Omega^3(M)$ satisfying \begin{equation}\label{eq:generalized-Ricci-flow} \partial_t g = -2\left(\operatorname{Ric}^g-\frac14 H^2\right), \qquad \partial_t H = -dd_g^*H, \end{equation} where $ H^2(X,Y) = \langle\iota_XH,\iota_YH\rangle_g$,  see \cite{GarciaFernandezStreets2020}.

\subsection{Almost Bismut Hermitian--Einstein structures and generalized Ricci flow} \label{sub4.1}

We first distinguish the first Bismut Ricci form from the Ricci tensor of the
characteristic connection. Throughout this subsection,
$\operatorname{Ric}^B$ denotes the Ricci tensor of $\nabla^B$, whereas
$\rho^B$ denotes its first Ricci form, as defined in Section~2.
We put \(T=T^B\) and use the Lee-form convention of
\cite{IvanovStanchev2023}: if \(F(X,Y)=g(X,JY)=\omega(X,Y)\), then
\(\theta=\delta F\circ J\). 

\begin{definition}
\begin{sloppypar}
An almost Hermitian structure \((J,\omega)\) is called
\emph{strong almost Bismut Hermitian--Einstein} (strong ABHE) if
\end{sloppypar}
\begin{equation}\label{eq:strong-ABHE}
\widetilde N_\omega\in\Omega^3(M),\qquad
 dT^B_\omega=0,\qquad
 \rho^B_\omega=0.
\end{equation}
\end{definition}

\begin{remark}  In real dimension six, an almost Hermitian structure with skew Nijenhuis
tensor and $\rho^B=0$ is called \emph{almost-CYT} (ACYT) in
\cite{FinoFowdar2026}; strong ABHE adds the almost-SKT equation
$dT^B=0$.
\end{remark}

 For later use, we recall the corresponding identities in our conventions. The first is the identity underlying
\cite[Theorem~3.5(1)]{KennonStreets}. Under the strong ABHE conditions
\[
dT=0,\qquad \rho^B=0,
\]
one has
\begin{equation}\label{eq:RicB-Lee}
\operatorname{Ric}^B=-\nabla^B\theta
\end{equation}
and
\begin{equation}\label{eq:deltaT-Lee}
\delta T=d\theta-\iota_{\theta^\sharp}T.
\end{equation}
Indeed, the characteristic connection satisfies
\begin{equation}\label{eq:rhoB-RicB}
\rho^B(X,Y)
=
\operatorname{Ric}^B(X,JY)
+(\nabla^B_X\theta)(JY)
+\frac14 dT(X,Y,e_a,Je_a),
\end{equation}
see \cite[Eq.~(2.19)]{IvanovStanchev2023}. Hence, since $dT=0$ and
$\rho^B=0$, replacing $Y$ by $-JY$ gives
\eqref{eq:RicB-Lee}.

The second identity follows by taking the skew-symmetric part of
\eqref{eq:RicB-Lee}. Indeed,
\[
\operatorname{Ric}^B_{ij}-\operatorname{Ric}^B_{ji}
=-(\delta T)_{ij},
\]
while \eqref{eq:RicB-Lee} gives
\[
\operatorname{Ric}^B_{ij}-\operatorname{Ric}^B_{ji}
=
-\nabla^B_i\theta_j+\nabla^B_j\theta_i.
\]
Therefore
\[
(\delta T)_{ij}
=
\nabla^B_i\theta_j-\nabla^B_j\theta_i.
\]
On the other hand,
\[
(d\theta)_{ij}
=
\nabla^B_i\theta_j-\nabla^B_j\theta_i
+\theta_sT_{sij}.
\]
It follows that
\[
(\delta T)_{ij}
=
(d\theta)_{ij}-\theta_sT_{sij},
\]
which is precisely \eqref{eq:deltaT-Lee}.

We shall also use the standard relation
\begin{equation}\label{eq:Ricci-comparison}
\operatorname{Ric}^g
=
\operatorname{Ric}^B+\frac12\delta T+\frac14T^2.
\end{equation}

 The identities \eqref{eq:RicB-Lee} and \eqref{eq:deltaT-Lee}  are local and require
only $dT=0$ and $\rho^B=0$. By \cite[Remark~3.4]{KennonStreets}, the condition
$\rho^B=0$ is equivalent to reduced holonomy.  

\subsection{Gradient solitons and fixed points}
 We first recall the compact gradient-soliton statement and then derive
two fixed-point consequences.

 In the compact case, \cite[Theorem~3.5]{KennonStreets} shows that
there exists a smooth function $f$ such that $(g,T,f)$ is a steady
generalized gradient Ricci soliton,
\[
\operatorname{Ric}^g-\frac14T^2+\nabla^{g,2}f=0,
\qquad
\delta T+\iota_{\nabla f}T=0,
\qquad
dT=0,
\]
and the vector field $V$ defined by
\[
V^\flat=\theta-df
\]
is $\nabla^B$-parallel. In particular, $V$ is Killing.
In contrast with the integrable BHE case, a characteristic-parallel field does
not automatically preserve the almost complex structure.  The obstruction is
completely explicit.

\begin{prop}
\label{prop:fixed-J-gauge-obstruction}
If \(V\) is \(\nabla^B\)-parallel, then
\begin{equation}\label{eq:LVJ-obstruction}
(\mathcal L_VJ)X
=
J\!\left(T(V,X,\cdot)^\sharp\right)
-
T(V,JX,\cdot)^\sharp.
\end{equation}
Consequently, the local flow generated by $V$ preserves $J$
if and only if
\begin{equation}\label{eq:torsion-J-V-condition}
T(V,JX,\cdot)^\sharp
=J\!\left(T(V,X,\cdot)^\sharp\right)
\qquad\text{for every }X.
\end{equation}
\end{prop}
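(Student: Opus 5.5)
We express $\mathcal L_VJ$ through the characteristic connection, in which both $J$ and $V$ are parallel, so that only torsion terms survive. For vector fields $V,X$ and any connection $\nabla$ with torsion $T^\nabla(V,X)=\nabla_VX-\nabla_XV-[V,X]$, one has
\[
[V,X]=\nabla_VX-\nabla_XV-T^\nabla(V,X).
\]
We use this with $\nabla=\nabla^B$. We write $T(V,X,\cdot)^\sharp$ for the vector $g$-dual to the one-form $T(V,X,\cdot)$, so that $T^{\nabla^B}(V,X)=T(V,X,\cdot)^\sharp$ under the convention $T(X,Y,Z)=g(T^{\nabla^B}(X,Y),Z)$.

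First we expand $(\mathcal L_VJ)X=[V,JX]-J[V,X]$ using the formula above:
\[
[V,JX]=\nabla^B_V(JX)-\nabla^B_{JX}V-T(V,JX,\cdot)^\sharp,
\qquad
J[V,X]=J\nabla^B_VX-J\nabla^B_XV-J\bigl(T(V,X,\cdot)^\sharp\bigr).
\]
Next we use that $\nabla^B$ is Hermitian, so $\nabla^BJ=0$ and hence $\nabla^B_V(JX)=J\nabla^B_VX$. The hypothesis $\nabla^BV=0$ kills both terms $\nabla^B_{JX}V$ and $J\nabla^B_XV$. Subtracting, we are left with
\[
(\mathcal L_VJ)X=J\bigl(T(V,X,\cdot)^\sharp\bigr)-T(V,JX,\cdot)^\sharp,
\]
which is \eqref{eq:LVJ-obstruction}.

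The consequence then follows from the standard fact that the local flow $\phi_t$ of $V$ preserves $J$ if and only if $\mathcal L_VJ=0$. Indeed, $\frac{d}{dt}\phi_t^*J=\phi_t^*(\mathcal L_VJ)$, so $\phi_t^*J=J$ for all small $t$ exactly when $\mathcal L_VJ$ vanishes. Setting the right-hand side of \eqref{eq:LVJ-obstruction} to zero for every $X$ gives \eqref{eq:torsion-J-V-condition}.

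The only point needing care is the sign convention. Both the sign of the torsion term in $[V,X]$ and the identification of $T^{\nabla^B}(V,X)$ with $T(V,X,\cdot)^\sharp$ depend on the conventions fixed in \eqref{eq:Bismut-torsion-convention}. A global sign change would flip the overall sign of the right-hand side of \eqref{eq:LVJ-obstruction}, but it would not affect the equivalence \eqref{eq:torsion-J-V-condition}. We would therefore check the convention once against the definition of $T^B$ and otherwise regard it as harmless.
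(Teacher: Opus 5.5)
Your proposal is correct and matches the paper's proof: you expand $(\mathcal L_VJ)X=[V,JX]-J[V,X]$ with the torsion identity $[X,Y]=\nabla^B_XY-\nabla^B_YX-T(X,Y,\cdot)^\sharp$, then use $\nabla^BJ=0$ and $\nabla^BV=0$ so that only the two torsion terms survive. Your extra justification that the flow preserves $J$ exactly when $\mathcal L_VJ=0$, via $\frac{d}{dt}\phi_t^*J=\phi_t^*(\mathcal L_VJ)$, is a standard detail that the paper leaves implicit.
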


\begin{proof}
Using \(\nabla^BJ=0\), \(\nabla^BV=0\), and
\([X,Y]=\nabla^B_XY-\nabla^B_YX-T(X,Y,\cdot)^\sharp\), one computes
\[
(\mathcal L_VJ)X=[V,JX]-J[V,X]
=J(T(V,X,\cdot)^\sharp)-T(V,JX,\cdot)^\sharp.
\]
\end{proof}
 This formula describes the obstruction to a
characteristic-parallel vector field preserving the almost
complex structure.
  In the compact six-dimensional ACYT case with closed
characteristic torsion, the parallel field
$V=\theta^\sharp-\nabla f$ preserves
the
full \(SU(3)\)-structure; see \cite[Theorem~6.5]{IvanovStanchev2023}.

There are two useful fixed-point consequences.

\begin{cor}
\label{cor:balanced-ABHE-GRF-fixed}
Let \((M^{2n},g,J)\) be strong ABHE.  If \(\theta=0\), then
\begin{equation}\label{eq:balanced-ABHE-fixed-identities}
\operatorname{Ric}^B=0,
\qquad
\delta T=0,
\qquad
 dT=0.
\end{equation}
Hence
\begin{equation}\label{eq:generalized-Ricci-flat-pair}
\operatorname{Ric}^g-\frac14T^2=0,
\qquad
\Delta_dT=0,
\end{equation}
and \((g,T)\) is a fixed point of generalized Ricci flow.
\end{cor}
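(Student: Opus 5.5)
The plan is to substitute $\theta=0$ into the two local identities \eqref{eq:RicB-Lee} and \eqref{eq:deltaT-Lee}, which hold under the strong ABHE hypotheses $dT=0$ and $\rho^B=0$. With $\theta=0$, \eqref{eq:RicB-Lee} gives $\operatorname{Ric}^B=-\nabla^B\theta=0$. Likewise \eqref{eq:deltaT-Lee} gives $\delta T=d\theta-\iota_{\theta^\sharp}T=0$. Together with $dT=0$, which is part of the strong ABHE definition \eqref{eq:strong-ABHE}, this proves \eqref{eq:balanced-ABHE-fixed-identities}. No compactness is needed for this step, since both identities are local.

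Next we feed these into the Ricci comparison formula \eqref{eq:Ricci-comparison}:
\[
\operatorname{Ric}^g=\operatorname{Ric}^B+\tfrac12\delta T+\tfrac14T^2=\tfrac14T^2 .
\]
This is the first equation of \eqref{eq:generalized-Ricci-flat-pair}. For the second, $\Delta_dT=dd^*T+d^*dT$. Up to the sign convention relating $\delta$ and $d^*$, $d^*T=\pm\delta T=0$, and $dT=0$ by hypothesis, so $\Delta_dT=0$.

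Finally, we check the generalized Ricci flow \eqref{eq:generalized-Ricci-flow} at $(g,T)$, taking $H=T$. The right-hand sides are $-2(\operatorname{Ric}^g-\tfrac14T^2)=0$ and $-dd_g^*T=0$. Hence the constant family $(g(t),H(t))\equiv(g,T)$ is a solution, that is, a fixed point, and $H=T$ is closed as required.

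The only point needing care is conventions. We must confirm that $\delta$ in \eqref{eq:deltaT-Lee} and \eqref{eq:Ricci-comparison} agrees with $d_g^*$ up to sign. We must also confirm that $T^2$ in \eqref{eq:Ricci-comparison} matches $H^2(X,Y)=\langle\iota_XH,\iota_YH\rangle_g$ in \eqref{eq:generalized-Ricci-flow}, so that the factor $\tfrac14$ lines up. Since $\delta T=0$ kills any sign ambiguity, only this normalization of $T^2$ matters. Both formulas use the standard convention $T^2_{ij}=T_{iab}T_{jab}$, so no obstacle is expected.
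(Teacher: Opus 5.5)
Your proposal is correct and follows the paper's proof: substituting $\theta=0$ into \eqref{eq:RicB-Lee} and \eqref{eq:deltaT-Lee} gives \eqref{eq:balanced-ABHE-fixed-identities}, and then \eqref{eq:Ricci-comparison} together with $dT=\delta T=0$ gives \eqref{eq:generalized-Ricci-flat-pair} and the fixed-point property. Your remarks on the normalization of $T^2$ and on the sign relating $\delta$ and $d_g^*$ are a reasonable check that the paper leaves implicit.
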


\begin{proof}
 
Equations \eqref{eq:RicB-Lee} and  \eqref{eq:deltaT-Lee} immediately give \eqref{eq:balanced-ABHE-fixed-identities}.
The conclusion follows from \eqref{eq:Ricci-comparison} and $dT=\delta T=0$.

\end{proof}

{\begin{remark}
The example of \cite[Example~5.8]{FinoFowdar2026} on $S^3\times S^3$
discussed in Remark~\ref{ex:S3S3-strong-ABHE} is balanced in this sense:
the Lee component of its $SU(3)$ torsion vanishes, so $\theta=0$.
Corollary~\ref{cor:balanced-ABHE-GRF-fixed} therefore identifies the pair
$(g,T^B)$ for that example as an actual fixed point of generalized Ricci
flow, not merely a stationary point of the constrained Bismut--Ricci flow.
\end{remark}
 On a compact manifold, the balanced assumption in
Corollary  \ref{cor:balanced-ABHE-GRF-fixed} can be weakened to the co-closedness
of the Lee form. In the integrable case, the equivalence
between a constant soliton potential and the Gauduchon
condition was established in
\cite[Lemma~2.9]{ABLS}. The following argument extends
this implication to strong ABHE structures.

\begin{cor}
\label{cor:coclosed-Lee-ABHE-fixed}
\begin{sloppypar}
Let \((M^{2n},g,J)\) be compact and strong ABHE.  If \(\delta\theta=0\), then
 the soliton potential $f$ given by
\cite[Theorem~3.5]{KennonStreets} is constant,
\(\nabla^B\theta=0\), and \((g,T)\) is a generalized Ricci fixed point.
\end{sloppypar}
\end{cor}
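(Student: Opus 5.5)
We take the soliton data from \cite[Theorem~3.5]{KennonStreets}: a smooth $f$ with
\[
\operatorname{Ric}^g-\tfrac14T^2+\nabla^{g,2}f=0,\qquad \delta T+\iota_{\nabla f}T=0,
\]
and $V^\flat=\theta-df$ $\nabla^B$-parallel, hence Killing. The plan is to show $\Delta f=0$ and then conclude $f$ is constant. The route is through the Killing field $V$: the divergence of a Killing field vanishes, so $\delta V^\flat=0$. Since $T$ is totally skew, the divergences of a one-form computed with $\nabla^B$ and with $\nabla^g$ agree, and we get
\[
0=\delta V^\flat=\delta\theta-\delta df=\delta\theta+\Delta f
\]
with $\Delta=-\delta d$ in the analyst's sign. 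The hypothesis $\delta\theta=0$ then gives $\Delta f=0$. On the compact connected $M$, $f$ is harmonic, so it is constant by integration by parts or the maximum principle.

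With $f$ constant, $\theta=V^\flat$ is $\nabla^B$-parallel, i.e.\ $\nabla^B\theta=0$. By \eqref{eq:RicB-Lee} this yields $\operatorname{Ric}^B=0$. The soliton equations reduce to
\[
\operatorname{Ric}^g-\tfrac14T^2=0,\qquad \delta T=0 .
\]
The second of these can also be read off from \eqref{eq:deltaT-Lee}, since $d\theta=\iota_{\theta^\sharp}T$ when $\nabla^B\theta=0$, using the formula for $d\theta$ in terms of $\nabla^B$ and $T$. Together with $dT=0$ this gives $dd^*T=0$, so both right-hand sides of \eqref{eq:generalized-Ricci-flow} vanish and $(g,T)$ is a fixed point of generalized Ricci flow.

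The main obstacle is making the divergence identity $\delta V^\flat=\delta\theta+\Delta f$ rigorous. Two points need checking. First, the trace of $\nabla^BV$ equals the trace of $\nabla^gV$, because the difference $\tfrac12T(X,V,\cdot)$ is skew and so has trace zero. Second, the sign conventions for $\delta$ must match those used for $\theta=\delta F\circ J$. If a sign mismatch appears, the conclusion is unaffected, since $\Delta f=\pm\delta\theta=0$ either way. Alternatively, one can pair $df$ against the parallel field: $\int|df|^2=\int\langle\theta,df\rangle-\int\langle V^\flat,df\rangle$. Here $\int\langle\theta,df\rangle=\int f\,\delta\theta=0$ and $\int\langle V^\flat,df\rangle=\int f\,\delta V^\flat=0$, which gives $df=0$ directly.
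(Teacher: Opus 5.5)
Your proposal is correct and follows essentially the same route as the paper: $\delta V^\flat=0$ for the Killing field forces $\delta df=0$, so $f$ is constant, $\theta=V^\flat$ is $\nabla^B$-parallel, $\operatorname{Ric}^B=0$ by \eqref{eq:RicB-Lee}, and $\delta T=0$ together with $dT=0$ gives the fixed point. The only cosmetic difference is that the paper obtains $\delta T=0$ from the skew part of $\operatorname{Ric}^B$, whereas you read it off from the soliton equation or from \eqref{eq:deltaT-Lee}; these are equivalent.
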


\begin{proof}
By \cite[Theorem~3.5]{KennonStreets}, write
\[
\theta=df+V^\flat,
\quad
\nabla^B V=0.
\]
Since $V$ is Killing, $\delta V^\flat=0$. Thus $\delta\theta=0$
gives $\delta df=0$, and compactness implies that $f$ is constant.
Therefore $\theta=V^\flat$ is $\nabla^B$-parallel. Equation~(31)
gives $\operatorname{Ric}^B=0$, and the identity
\[
\operatorname{Ric}^B_{ij}-\operatorname{Ric}^B_{ji}
=-(\delta T)_{ij}
\]
then gives $\delta T=0$. Hence the previous fixed-point argument applies.
\end{proof}

The vanishing Einstein constant is not merely a normalization in the genuinely
non-integrable case.

\begin{prop}
\label{prop:nonzero-Bismut-Einstein-Kahler}
Let \((M^{2n},g,J)\) admit a characteristic connection and suppose that, for a
constant \(\lambda\neq0\),
\begin{equation}\label{eq:lambda-ABHE}
\rho^B=\lambda\omega.
\end{equation}
Then \(d\omega=0\), \(N_J=0\), and \((g,J)\) is K\"ahler.  Consequently
\eqref{eq:lambda-ABHE} is the ordinary K\"ahler--Einstein equation.  In
particular, a  non-integrable Bismut Hermitian--Einstein analogue
with constant Einstein factor must have \(\lambda=0\).
\end{prop}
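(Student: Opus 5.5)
The key input is that the first Ricci form of any Hermitian connection is closed. Since $\nabla^B$ is unitary, its curvature $R^B$ takes values in $\mathfrak u(n)$, and $\rho^B$ is (up to a factor) the trace $\operatorname{tr}_{\mathbb C}$ of the curvature two-form $R^B$. The trace of the curvature of a unitary connection is the curvature of the induced connection on the determinant line bundle $K_J^{-1}$. It is therefore locally exact and globally closed, which is the Chern--Weil fact already used in the proof of Corollary~\ref{cor:balanced-rigid-topological-slope}. Applying $d$ to $\rho^B=\lambda\omega$ with $\lambda$ a nonzero constant gives $\lambda\,d\omega=0$, hence $d\omega=0$. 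So the first step is simply to record $d\rho^B=0$ and conclude that $\omega$ is symplectic.

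The second step deduces integrability from $d\omega=0$ together with total skew-symmetry of $\widetilde N$. I would use the type decomposition of $d\omega$ for an almost Hermitian structure. The $(3,0)+(0,3)$ component of $d\omega$ is determined algebraically by the Nijenhuis tensor: $(d\omega)^{3,0+0,3}$ is a nonzero constant multiple of the $(3,0)+(0,3)$ part of $\widetilde N$. In the $\mathcal G_1$ class, $\widetilde N$ is a three-form of pure type $(3,0)+(0,3)$, as used in the proofs of Proposition~\ref{prop:algebraic-rigidity-criterion} and Theorem~\ref{thr3.16-dimsix}. Thus $d\omega=0$ forces $\widetilde N=0$, and hence $N_J=0$. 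An equivalent route, in Gray--Hervella language, is that $d\omega=0$ means the structure is almost Kähler, of class $\mathcal W_2$. Since $\mathcal G_1\cap\mathcal W_2$ is Kähler, $\nabla\omega=0$ follows. The only genuine work is checking the constant relating $(d\omega)^{3,0}$ to $\widetilde N^{3,0}$. One clean way to do this is via the torsion formula \eqref{eq:Bismut-torsion-convention}: $T^B=d^c\omega+\widetilde N$ has $(3,0)+(0,3)$ part that one can compute from $d^c\omega=-J\,d\omega$, with the appropriate sign, and from $\widetilde N$ separately. The known identity is $(T^B)^{3,0+0,3}=\tfrac14\widetilde N^{\,3,0+0,3}$ type relation, so one only needs that the coefficient is nonzero. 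I expect this bookkeeping of signs and constants to be the main (mild) obstacle. Whichever convention is used, the relation is $\widetilde N^{(3,0)+(0,3)}=c\,(d\omega)^{(3,0)+(0,3)}$ with $c\neq0$, and that suffices.

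Finally, with $N_J=0$ and $d\omega=0$ the structure is Kähler, and $T^B=d^c\omega=0$. The characteristic connection then coincides with the Levi-Civita connection, and $\rho^B$ is the Kähler Ricci form. So \eqref{eq:lambda-ABHE} is the Kähler--Einstein equation. The last sentence of the statement is the contrapositive: if $J$ is non-integrable and $\rho^B=\lambda\omega$ with $\lambda$ constant, then $\lambda\neq0$ would force integrability, so $\lambda=0$.
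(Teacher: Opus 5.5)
Your proposal is correct and follows the paper's proof: closedness of $\rho^B$ (as the trace of the curvature of a unitary connection) gives $d\omega=0$, and the algebraic identification of $(d\omega)^{(3,0)+(0,3)}$ with the skew Nijenhuis form then forces $N_J=0$, after which $T^B=0$ and the equation is Kähler--Einstein. The paper's identity reads $(d\omega)^-(X,Y,Z)=\frac34\widetilde N(JX,Y,Z)$, so the relation is a nonzero multiple composed with the invertible action of $J$ on $(3,0)+(0,3)$-forms rather than a literal real scalar multiple, which leaves your conclusion unchanged.
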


\begin{proof}
The first Ricci form of a unitary connection is closed by the Bianchi identity,
so \(0=d\rho^B=\lambda d\omega\), and \(d\omega=0\).  On an almost Hermitian
manifold admitting a characteristic connection, the \((3,0)+(0,3)\) component
of \(d\omega\) determines the skew Nijenhuis tensor; explicitly, up to the
fixed convention,
\[
(d\omega)^-(X,Y,Z)=\frac34\widetilde N(JX,Y,Z),
\]
see \cite[Section~2.2]{IvanovStanchev2023}.  Hence \(N_J=0\).  Thus \(J\) is
integrable and \(d\omega=0\), so the structure is K\"ahler.  Its characteristic
connection is the Levi--Civita connection and \eqref{eq:lambda-ABHE} is the
K\"ahler--Einstein equation.
\end{proof}

\begin{remark}
The weaker equation \((\rho^B)^{1,1}=\lambda\omega\) does not yield the same
immediate closedness argument and is a distinct rigidity problem.  Even in the
integrable theory it is substantially weaker than
\(\rho^B=\lambda\omega\); we do not impose it as the definition of ABHE here.
\end{remark}

\section{Examples and constructions}
{ We first rule out a natural attempt at a non-flat non-integrable example on
$SU(3)/T^2$; give a local cohomogeneity-one construction on $I\times S^5$
where this obstruction is overcome; and then, via a quaternionic rotation,
produce compact examples on $S^1\times S^2\times S^3$ and $S^3\times S^3$.}
\subsection{A no-go calculation on the flag manifold $SU(3)/T^2$}
{ Having introduced the ABHE condition, we now turn to constructing
 non-flat non-integrable examples.  The flat example of
Remark~\ref{ex:S3S3-strong-ABHE} suggests asking whether Bismut-flatness
is merely a consequence of its very special naturally reductive
presentation. The next computation shows that a larger
and natural homogeneous ansatz still does not produce an almost-SKT
structure compatible with the given non-integrable almost complex structure.
}

\begin{prop}
\label{prop:flag-no-strong}
Let $F_{1,2}=\mathrm{SU}(3)/T^2$ be the full flag manifold.  Write the isotropy
representation as the sum of its three real two-dimensional root modules
$\mathfrak m_1\oplus\mathfrak m_2\oplus\mathfrak m_3$ and consider the general
invariant metric
\[
 g=x_1Q|_{\mathfrak m_1}+x_2Q|_{\mathfrak m_2}+x_3Q|_{\mathfrak m_3},
 \qquad x_i>0,
\]
where $Q=-\frac12\operatorname{tr}(XY)$.  Choose a $g$-orthonormal frame
$e_1,\ldots,e_6$ adapted to the three root modules and the non-integrable
invariant almost complex structure
\[
 Je_1=e_2,\qquad Je_3=e_4,\qquad Je_5=-e_6.
\]
Then $\widetilde N$ is a three-form if and only if
\[
 x_1=x_2=x_3.
\]
For $x_1=x_2=x_3=s^2$, the characteristic torsion satisfies
\[
 T^B=-\frac1s\bigl(e^{135}+e^{146}+e^{236}-e^{245}\bigr),
\]
and
\[
 dT^B=\frac4{s^2}\bigl(e^{1234}-e^{1256}-e^{3456}\bigr)\neq0.
\]
Consequently no invariant metric compatible with this non-integrable invariant
almost complex structure is almost-SKT.
\end{prop}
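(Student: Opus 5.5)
The plan is a direct homogeneous computation in a $g$-orthonormal adapted frame, in three steps: the Nijenhuis tensor, the skew-symmetry condition on it, and then $T^B$ and $dT^B$.

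\textbf{Step 1: the Nijenhuis tensor in the frame.} Fix a $Q$-orthonormal basis $E_1,\dots,E_6$ of $\mathfrak m=\mathfrak m_1\oplus\mathfrak m_2\oplus\mathfrak m_3$ adapted to the root modules, with $\mathfrak m_i=\langle E_{2i-1},E_{2i}\rangle$. Put $e_{2i-1}=E_{2i-1}/\sqrt{x_i}$ and $e_{2i}=E_{2i}/\sqrt{x_i}$. The only brackets leaving $\mathfrak t$ are the $[\mathfrak m_i,\mathfrak m_j]_{\mathfrak m}\subset\mathfrak m_k$ for $\{i,j,k\}=\{1,2,3\}$, each with structure constants of a fixed absolute value. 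In the $e$-frame these acquire the factor $\sqrt{x_k}/\sqrt{x_ix_j}$. By invariance, $N_J$ can be computed on $\mathfrak m$ via $[X,Y]_{\mathfrak m}$; the $\mathfrak t$-components only act by isotropy, which commutes with $J$, so they drop out of $N_J$.

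Because $J$ is the non-integrable structure (the sign on $\mathfrak m_3$ is flipped), $N_J$ maps $\mathfrak m_i\otimes\mathfrak m_j$ into $\mathfrak m_k$, with $N_J(e_a,e_b)=4\,[e_a,e_b]_{\mathfrak m}$ up to sign on the relevant components.

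\textbf{Step 2: skew-symmetry forces $x_1=x_2=x_3$.} I write
\[
\widetilde N(e_a,e_b,e_c)=c_{ab}^{\;c}\,\frac{\sqrt{x_k}}{\sqrt{x_ix_j}},
\]
with $c$ built from the $Q$-structure constants. Requiring total skew-symmetry, for example $\widetilde N(e_1,e_3,e_5)=-\widetilde N(e_1,e_5,e_3)$, compares the factor $\sqrt{x_3}/\sqrt{x_1x_2}$ against $\sqrt{x_2}/\sqrt{x_1x_3}$, which gives $x_2=x_3$. Cycling gives $x_1=x_2=x_3$. Equivalently, one could apply the Proposition~\ref{lem:dimension-six-rigidity-final} mechanism: the normal metric $Q$ is admissible because $\widetilde N_Q$ is $\mathrm{ad}$-invariant and hence skew, so any other admissible $J$-invariant metric is a scalar multiple of it. I would present the explicit ratio argument but remark on this. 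The converse direction is the same $\operatorname{ad}$-invariance statement.

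\textbf{Step 3: $T^B$ and $dT^B$ when $x_i=s^2$.} I use $T^B=d^c\omega+\widetilde N$ from \eqref{eq:Bismut-torsion-convention}. Here $\omega=e^{12}+e^{34}-e^{56}$, and the invariant exterior derivative is
\[
d\alpha(X,Y,Z)=-\alpha([X,Y]_{\mathfrak m},Z)+\text{cyclic},
\]
applied with the structure constants $[e_a,e_b]=\frac1s(\pm e_c)$. Both $d^c\omega$ and $\widetilde N$ are then multiples of the $(3,0)+(0,3)$ forms $e^{135}\pm\cdots$, and adding them yields the stated $T^B$. For $dT^B$ I apply the same formula for $d$ on invariant forms. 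The wedge products $e^{ab}$ with both indices in one $\mathfrak m_i$ have $d$ involving $\mathfrak t$ only through $\mathfrak m\to\mathfrak m$ brackets; more carefully, the invariant-form formula uses only $[\cdot,\cdot]_{\mathfrak m}$. This produces $\frac{4}{s^2}(e^{1234}-e^{1256}-e^{3456})$, which is nonzero.

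\textbf{Main obstacle.} The delicate part is bookkeeping of signs and normalizations. This covers fixing the root-vector structure constants for $Q=-\frac12\operatorname{tr}$, the sign conventions for $d^c$ and $N_J$, and checking that $d^c\omega$ and $\widetilde N$ combine to the stated coefficient $-1/s$ rather than cancelling. I would first verify the numbers with explicit matrices in $\mathfrak{su}(3)$, e.g. $E_1,E_2$ built from $E_{12}-E_{21}$ and $i(E_{12}+E_{21})$, and similarly for the other roots. The final non-vanishing of $dT^B$ only needs one coefficient, say that of $e^{1234}$, to be nonzero, which makes the conclusion robust to sign slips.
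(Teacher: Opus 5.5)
Your proposal is correct and follows the paper's proof. The skew-symmetry defect on a triple with one vector in each root module gives $x_2=x_3$ (the paper's factor $r_2^2-r_3^2$), cyclic permutation gives the rest, and $T^B$, $dT^B$ then come from substituting into $T^B=d^c\omega+\widetilde N$ and applying the Chevalley--Eilenberg differential on $\mathfrak m$. Your side remark, applying Proposition~\ref{lem:dimension-six-rigidity-final} with the $\mathrm{ad}$-invariant metric $Q$, is also a valid shortcut for the ``only if'' direction.

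One point for the sign check you flag. The paper's convention $\omega(X,Y)=g(X,JY)$ gives $\omega=-e^{12}-e^{34}+e^{56}$, the opposite of what you wrote. Since this is the nearly K\"ahler flag, $\widetilde N=4T^B$ and $d^c\omega=-3T^B$. So with your sign (and the paper's $d^c$) the two terms would add to $-\frac7s(e^{135}+e^{146}+e^{236}-e^{245})$ instead of partially cancelling to the stated $-\frac1s(\cdots)$. The conclusion $dT^B\neq0$ holds either way.
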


\begin{proof} 
Use the standard reductive complement consisting of the off-diagonal
skew-Hermitian matrices. If $r_i=\sqrt{x_i}$, the three failures of total
skew-symmetry of the lowered Nijenhuis tensor are, up to a common nonzero
factor,
\[
r_2^2-r_3^2,\qquad r_1^2-r_3^2,\qquad r_1^2-r_2^2.
\]
{ For instance, evaluating the skew-symmetry defect on a triple with one
vector in each root module gives, up to the same nonzero factor,
\(r_2^2-r_3^2\); the other two expressions follow by cyclic permutation.}
Hence the $G_1$ condition holds if and only if  $r_1=r_2=r_3=s$. Substitution
into the characteristic-torsion formula gives the displayed $T^B$. Applying
the homogeneous Chevalley--Eilenberg differential yields the displayed
nonzero four-form. The invariant almost complex structures on this
homogeneous space and their Gray--Hervella types are discussed more generally
in \cite{AlekseevskyKruglikovWinther2014}.
\end{proof}

\begin{remark}
The proposition above points in the same direction as
Remark~\ref{rem:Schoemann-parallel-strong-flat}: 
 an example with
\[
 N_J\neq0,\qquad dT^B=0,\qquad R^B\neq0
\]
 must be found beyond the homogeneous constructions considered above.
Proposition~\ref{prop:S5-cohom1-strong-ABHE}
below shows that breaking homogeneity by a cohomogeneity-one deformation is
already sufficient locally.
\end{remark}

\subsection{A cohomogeneity-one example on $S^5$}
The following construction is local and does not require compactness. It provides a strong ABHE structure whose characteristic
connection is not flat.

\begin{prop}
\label{prop:S5-cohom1-strong-ABHE}
Let \((\eta,\omega_1,\omega_2,\omega_3)\) be the standard Sasaki--Einstein
$SU(2)$-structure on $S^5$, normalized by
\begin{equation}\label{eq:S5-SU2-structure}
 d\eta=-2\omega_3,\qquad
 d\omega_1=3\eta\wedge\omega_2,\qquad
 d\omega_2=-3\eta\wedge\omega_1,\qquad
 d\omega_3=0.
\end{equation}
These are the Einstein--Sasaki equations of \cite[Proposition~11]{ContiSalamon2007}
after a cyclic relabelling of the three two-forms.  Let $r$ denote the
coordinate on an interval $I$, and let $x,b>0$ and $\vartheta$ be
functions of $r$.  Put $s=\sin\vartheta$, $c=\cos\vartheta$ and
\begin{equation}\label{eq:S5-ABC}
 A=c\,\omega_1+s\,\omega_3,\qquad
 B=\omega_2,\qquad
 C=-s\,\omega_1+c\,\omega_3.
\end{equation}
Then, on every open interval on which \(sc\neq0\), the system
\begin{equation}\label{eq:S5-strong-ODE}
\begin{aligned}
 \vartheta'&=\frac{s(4b^2-3x)}{bx},\\
 b'&=\frac{3x-2b^2}{xc},\\
 x'&=\frac{2b^2(c^2-s^2)+3xs^2}{bc}
\end{aligned}
\end{equation}
defines a local almost Hermitian structure on \(I\times S^5\) satisfying
\begin{equation}\label{eq:S5-strong-properties}
 \widetilde N\in\Omega^3,\qquad
 \widetilde N\neq0,\qquad
 dT^B=0,\qquad
 \rho^B=0.
\end{equation}
Moreover, such structures need not be Bismut-flat.  For example, the solution
with
\begin{equation}\label{eq:S5-initial-data}
 x(0)=b(0)=1,\qquad \vartheta(0)=\frac\pi4
\end{equation}
has \(\operatorname{Ric}^B\neq0\) at \(r=0\), and hence \(R^B\neq0\).
\end{prop}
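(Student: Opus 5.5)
We work directly on $I\times S^5$ with a cohomogeneity-one $\mathrm{SU}(3)$-structure, so that every condition in \eqref{eq:S5-strong-properties} reduces to the system \eqref{eq:S5-strong-ODE}. We take
\[
g=dr^2+b^2\eta^2+x\,(g_{\mathcal H}),\qquad
\omega=b\,dr\wedge\eta+x\,C,
\]
with complex volume form built from $A,B$:
\[
\Omega=x\,(dr+i\,b\,\eta)\wedge(A+iB).
\]
The triple $(A,B,C)$ is a rotation of $(\omega_1,\omega_2,\omega_3)$, so it is again an orthonormal $SU(2)$-triple on the horizontal bundle. Hence $(\omega,\Omega)$ is an $\mathrm{SU}(3)$-structure compatible with a metric of this form. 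The exact weights ($x$ versus $x^2$, and so on) will be fixed so that the exterior derivatives close on the span of $\{dr,\eta,\omega_i\}$.

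\textbf{Step 1: derivatives and torsion classes.} Using \eqref{eq:S5-SU2-structure} and $\partial_r A=\vartheta' C$, $\partial_r C=-\vartheta' A$, we compute $d\omega$, $d\Omega^\pm$ explicitly. For instance,
\[
dA=3c\,\eta\wedge\omega_2+dr\wedge\vartheta' C .
\]
From these we read off the intrinsic torsion components $\pi_0,\sigma_0,\nu_1,\pi_1,\dots$ in the conventions of \cite{FinoFowdar2026}. By \cite[Proposition~4.1]{FinoFowdar2026}, skew-symmetry of $\widetilde N$ means the $\mathcal W_2$-component vanishes, so we impose $\pi_2=\sigma_2=0$. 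These conditions are algebraic/first order in $(x,b,\vartheta)$. We expect them to fix $\vartheta'$, or to be automatic modulo \eqref{eq:S5-strong-ODE}. We then check that $\widetilde N=2\pi_0\Omega^++2\sigma_0\Omega^-$ has $(\pi_0,\sigma_0)\neq0$; the factor $s$ in $\vartheta'$ and the terms $3c\,\eta\wedge\omega_2$ suggest that $\pi_0\propto s/b$ or similar, which is nonzero when $sc\neq0$.

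\textbf{Step 2: closed torsion and $\rho^B=0$.} We compute
\[
T^B=d^c\omega+\widetilde N=-Jd\omega+\widetilde N
\]
(up to the fixed convention) as an explicit combination of $dr\wedge\eta\wedge\omega_i$, $\eta\wedge\omega_i$-type terms and $dr\wedge\omega_i$. Applying $d$ once more, with the relations \eqref{eq:S5-SU2-structure} and $\omega_i\wedge\omega_j=2\delta_{ij}\,\mathrm{vol}_{\mathcal H}$, gives $dT^B$ as a combination of $dr\wedge\eta\wedge\omega_i$ and $\mathrm{vol}_{\mathcal H}$, $dr\wedge\eta\wedge\mathrm{vol}$-free terms. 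Setting the coefficients to zero should yield exactly the three equations of \eqref{eq:S5-strong-ODE}; here we divide by $c$ and $bx$, which explains the hypothesis $sc\neq0$. For $\rho^B=0$ we use \cite[Corollary~4.8]{FinoFowdar2026}: as in the proof of Theorem~\ref{thr3.16-dimsix}, it suffices to check $\pi_1=2\nu_1$. Alternatively, once $dT^B=0$ and $\widetilde N\neq0$ hold, Theorem~\ref{thr3.16-dimsix} applied locally on the connected set $I\times S^5$ gives $\rho^B=0$ for free, since its proof is local on components of $U$. We will use the latter route.

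\textbf{Step 3: non-flatness.} At $r=0$ with data \eqref{eq:S5-initial-data} we have $s=c=1/\sqrt2$, and the right-hand sides give
\[
\vartheta'(0)=\tfrac{1}{\sqrt2},\qquad b'(0)=\sqrt2,\qquad x'(0)=\tfrac{3}{\sqrt2}.
\]
By \eqref{eq:RicB-Lee}, $\operatorname{Ric}^B=-\nabla^B\theta$. Alternatively, \eqref{eq:Ricci-comparison} gives
\[
\operatorname{Ric}^B=\operatorname{Ric}^g-\tfrac12\delta T-\tfrac14T^2 ,
\]
and we evaluate the $dr\otimes dr$ component, since $\operatorname{Ric}^g(\partial_r,\partial_r)$ is given by the standard warped-product formula in $b'',x''$ and their derivatives. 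We exhibit one nonzero entry. \textbf{The main obstacle} is Step 2: carrying the sign conventions for $d^c$, $N_J$ and the torsion classes consistently so that $dT^B=0$ reduces exactly to \eqref{eq:S5-strong-ODE}. We will cross-check the calculation by verifying that $d(dT^B)=0$ holds identically.
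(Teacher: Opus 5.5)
Your outline follows the paper's route: a cohomogeneity-one $\mathrm{SU}(3)$-ansatz, the skew condition as $\pi_2=\sigma_2=0$, explicit $T^B$ and $dT^B$, then $\rho^B$ and one Ricci component. However, the ansatz you wrote down has the wrong relative orientation, and with it the reduction to \eqref{eq:S5-strong-ODE} fails. Read in either convention for $\omega$, your pair $\omega=b\,dr\wedge\eta+xC$, $\Omega=x(dr+ib\eta)\wedge(A+iB)$ determines, up to an overall sign of $J$ (which affects none of the conditions), the paper's structure $\omega=-xC-b\,\eta\wedge dr$, $\Psi=x(A+iB)\wedge(b\eta+i\,dr)$ taken at angle $\vartheta+\pi$. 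Equivalently, the complex structure on $\mathrm{span}\{\partial_r,\xi\}$ is reversed relative to the horizontal one.

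You can see this already in your Step~1. With your forms,
\[
(d\Omega)^{2,2}=-ix\,(3s-b\vartheta')\,dr\wedge\eta\wedge C-4ibsx\,v_H,
\qquad
\omega^2=2x^2v_H+2xb\,dr\wedge\eta\wedge C .
\]
The condition $\pi_2=\sigma_2=0$, i.e.\ $(d\Omega)^{2,2}\in\mathbb C\,\omega^2$, then forces $\vartheta'=-s(4b^2-3x)/(bx)$. For comparison, the paper's pair gives $(d\Psi)^{2,2}=x(b\vartheta'+3s)\,dr\wedge\eta\wedge C-4bsx\,v_H$ against $\omega^2=2x^2v_H-2xb\,dr\wedge\eta\wedge C$, hence the stated sign and $\pi_0=-2bs/x$. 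At the initial data $4b^2-3x=1$, so with your ansatz the skew condition genuinely fails there.

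Every right-hand side of \eqref{eq:S5-strong-ODE} is odd under $(s,c)\mapsto(-s,-c)$, so your ansatz realizes the time-reversed system throughout. In the boundary case $\vartheta\equiv0$ the given system yields $b=r$, $x=r^2$. There the paper's $\omega$ is the K\"ahler cone, while yours has $d\omega=4r\,dr\wedge\omega_3$ and $dd^c\omega\neq0$. The weights $x$ are correct; what must change is the orientation. Replace $dr$ by $-dr$ in both forms, which recovers the paper's pair up to $J\mapsto-J$ and a phase of $\Omega$.

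With that fixed, two further remarks.

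\textbf{Bookkeeping in Step~2.} The skew condition alone gives the $\vartheta'$-equation. With $T^B=P\,\eta\wedge C+Q\,\eta\wedge A+R\,dr\wedge B$, $dT^B$ has three components, along $v_H$, $\eta\wedge dr\wedge A$ and $\eta\wedge dr\wedge C$. The algebraic $v_H$-component gives the $x'$-equation and the second gives the $b'$-equation. The third contains $x''$ and must be shown to vanish identically on solutions. That compatibility check is what the direction ``system $\Rightarrow dT^B=0$'' requires; verifying $d(dT^B)=0$ is automatic and does not replace it.

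\textbf{Your shortcuts.} These are sound and differ usefully from the paper. Theorem~\ref{thr3.16-dimsix} is local and uses no compactness. So it gives $\rho^B=0$ on the connected $I\times S^5$ once $\widetilde N\in\Omega^3$, $\widetilde N\neq0$ and $dT^B=0$ are known, with no need for the paper's direct check $\pi_1=2\nu_1$. The Ricci comparison likewise avoids the Lee form altogether. Since $\iota_{\partial_r}T^B=x\vartheta'B$, one gets $\frac14T_{rab}T_{rab}=\vartheta'^2=\frac12$ at $r=0$. Differentiating the system gives $b''(0)=-1$ and $x''(0)=2$, so $\operatorname{Ric}^g(\partial_r,\partial_r)=-b''/b-2x''/x+(x'/x)^2=\frac32$. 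Hence $\operatorname{Ric}^B(\partial_r,\partial_r)=1$, in agreement with the paper's value $-2\lambda'(0)=1$ obtained from $\operatorname{Sym}\operatorname{Ric}^B=-\frac12\mathcal L_{\theta^\sharp}g$.
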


\begin{proof}  We retain throughout the convention
$\omega(X,Y)=g(X,JY)$ fixed in Section~2.
Consider first the full geometric $SO(3)$-rotation of the Sasaki--Einstein
triple.  Up to the phase of the complex volume form, it is parametrized by a
unit vector \(C:I\to S^2\).  Locally write
\begin{align}\label{eq:S5-full-SO3}
 A&=\cos\vartheta(\cos\varphi\,\omega_1+\sin\varphi\,\omega_2)
       +\sin\vartheta\,\omega_3,\notag\\
 B&=-\sin\varphi\,\omega_1+\cos\varphi\,\omega_2,\\
 C&=-\sin\vartheta(\cos\varphi\,\omega_1+\sin\varphi\,\omega_2)
       +\cos\vartheta\,\omega_3.\notag
\end{align}
The remaining Euler angle rotates \(A+iB\) by a phase and therefore changes
neither \((g,J, \omega)\) nor the tensors considered below.  Define
\begin{equation}\label{eq:S5-SU3-ansatz}
 \omega=-xC-b\,\eta\wedge dr,
\qquad
\Psi=\Psi_++i\Psi_-
=x(A+iB)\wedge(b\eta+i\,dr),
\end{equation}
so that
\begin{equation}\label{eq:S5-metric}
 g=xg_H+b^2\eta^2+dr^2,
\end{equation}
where \(g_H\) is the horizontal Sasaki--Einstein metric. In particular,
\[
\Psi_+
=x\bigl(b\,\eta\wedge A-dr\wedge B\bigr),
\qquad
\Psi_-
=x\bigl(dr\wedge A+b\,\eta\wedge B\bigr).
\]
 Using \eqref{eq:S5-SU2-structure} and the \(SU(3)\)-torsion identities
written with our convention \(\omega(X,Y)=g(X,JY)\), the condition that
the Nijenhuis tensor be totally skew, equivalently
\(\pi_2=\sigma_2=0\), gives
on the locus \(s\neq0\)
\begin{equation}\label{eq:S5-skew-N-system}
 \varphi'=0,\qquad
 \vartheta'=\frac{s(4b^2-3x)}{bx},\qquad
 \pi_0=-\frac{2bs}{x},\qquad \sigma_0=0,
\end{equation}
and
\begin{equation}\label{eq:S5-pi1}
 \pi_1=
 \left(\frac{x'}x+\frac{b'}b-\frac{3c}{b}\right)dr.
\end{equation}
The constant value of \(\varphi\) can be absorbed into the fixed
\(U(1)\)-rotation of \((\omega_1,\omega_2)\), so we henceforth set
\(\varphi=0\), obtaining \eqref{eq:S5-ABC}. From \eqref{eq:S5-SU2-structure} and \eqref{eq:S5-ABC} we obtain
\begin{align}
dA&=3c\,\eta\wedge B+\vartheta'\,dr\wedge C,\\
dB&=-3c\,\eta\wedge A+3s\,\eta\wedge C,\\
dC&=-\vartheta'\,dr\wedge A-3s\,\eta\wedge B,
\end{align}
while $d\eta=-2(sA+cC).$ By the formula for the skew Nijenhuis form in
\cite[Proposition~4.1]{FinoFowdar2026}, after translating to our
convention for \(N_J\), we obtain
\begin{equation}\label{eq:S5-N-form} \widetilde N
=-4bs\bigl(b\,\eta\wedge A-dr\wedge B\bigr)
=-\frac{4bs}{x}\Psi_+.
\end{equation}
In particular \(\widetilde N\neq0\) precisely when \(s\neq0\).

The characteristic torsion is
\begin{equation}\label{eq:S5-T-PQR}
 T^B=P\,\eta\wedge C+Q\,\eta\wedge A+R\,dr\wedge B,
\end{equation}
where
\begin{equation}\label{eq:S5-PQR}
 P=b(x'-2bc),\qquad
 Q=s(2b^2-3x),\qquad
 R=\frac{s(4b^2-3x)}b=x\vartheta'.
\end{equation}
 Let $v_H$ be the horizontal volume form normalized by
\(\omega_i^2=2v_H\). Since
\[
A^2=B^2=C^2=2v_H,
\qquad
A\wedge B=A\wedge C=B\wedge C=0,
\]
we have
\begin{align}
d(\eta\wedge C)
&=-4c\,v_H+\vartheta'\eta\wedge dr\wedge A,\\
d(\eta\wedge A)
&=-4s\,v_H-\vartheta'\eta\wedge dr\wedge C,\\
d(dr\wedge B)
&=-3c\,\eta\wedge dr\wedge A
  +3s\,\eta\wedge dr\wedge C.
\end{align}
Therefore, direct differentiation gives
\begin{align}\label{eq:S5-dT-decomp}
 dT^B={}&-4(cP+sQ)v_H
 +(P\vartheta'-Q'-3cR)\,\eta\wedge dr\wedge A\notag\\
 &+(-P'-Q\vartheta'+3sR)\,\eta\wedge dr\wedge C.
\end{align}
Combining \eqref{eq:S5-skew-N-system} with \(dT^B=0\), the first two
independent components of \eqref{eq:S5-dT-decomp} give the last two equations
of \eqref{eq:S5-strong-ODE}; after differentiating the equation for \(x'\)
and using the equations for \(b'\) and \(\vartheta'\), the remaining component
vanishes identically.  Hence \eqref{eq:S5-strong-ODE} is exactly the strong
system on \(sc\neq0\).

The \(SU(3)\) Lee component is
\begin{equation}\label{eq:S5-nu1}
 \nu_1=\lambda\,dr,\qquad
 \lambda=\frac{x'-2bc}{x}
 =\frac{(3x-2b^2)s^2}{bcx},
\end{equation}
and substitution of \eqref{eq:S5-strong-ODE} into \eqref{eq:S5-pi1} yields
\begin{equation}\label{eq:S5-pi1-2nu1}
 \pi_1=2\nu_1.
\end{equation}
It also gives \(\pi_0'=0\).  Corollary~4.8 of \cite{FinoFowdar2026}
(or the corresponding \(SU(3)\)-torsion formula for the first Bismut Ricci
form) therefore gives \(\rho^B=0\).  This proves
\eqref{eq:S5-strong-properties}.

It remains to see that the characteristic connection need not be flat.  In the
Lee-form convention of this paper one has \(\theta=2\nu_1=2\lambda\,dr\).
 Equation~\eqref{eq:RicB-Lee} therefore implies
\begin{equation}\label{eq:S5-SymRicB}
 \operatorname{Sym}\operatorname{Ric}^B
 =-\frac12\mathcal L_{\theta^\sharp}g
 =-\mathcal L_{\lambda\partial_r}g
 =-\lambda x' g_H-2\lambda bb'\eta^2-2\lambda' dr^2.
\end{equation}
For the initial data \eqref{eq:S5-initial-data}, the system gives
\begin{equation}\label{eq:S5-initial-derivatives}
\begin{aligned}
 \vartheta'(0)&=\frac1{\sqrt2}, &
 b'(0)&=\sqrt2, &
 x'(0)&=\frac3{\sqrt2},\\
 \lambda(0)&=\frac1{\sqrt2}, &
 \lambda'(0)&=-\frac12.
\end{aligned}
\end{equation}
Consequently
\begin{equation}\label{eq:S5-initial-SymRic}
 \left.\operatorname{Sym}\operatorname{Ric}^B\right|_{r=0}
 =-\frac32g_H-2\eta^2+dr^2\neq0.
\end{equation}
Thus \(\operatorname{Ric}^B\neq0\), and in particular \(R^B\neq0\), at the
initial orbit.  Standard ODE existence gives a local solution through every
initial datum with \(x,b>0\) and \(sc\neq0\).
\end{proof}
{Notice that, along any solution of the system \eqref{eq:S5-strong-ODE} on which $sc\neq0$, the quantity $\mu=\frac{bs}{x}$ is constant, since} $$\frac{\mu'}
{\mu}=\frac{b'}{b}+\frac{c}{s}\vartheta'-\frac{x'}{x}=0.$$  Setting $y=\frac{x}{b^2}$  and using $\mu=\frac{s}{yb}$, the system \eqref{eq:S5-strong-ODE} gives \begin{equation}
    \begin{split}
        \vartheta'=&\mu(4-3y)\\
        b'=&\frac{3y-2}{yc}
    \end{split}
\end{equation}
Now, using that $y=\frac{x}{b^2}$, we get 
\begin{equation}
    y'=\frac{(2-3y)+(4-3y)c^2}{bc}
\end{equation} Since $x,b>0$, we also have  $y>0$. { On the region
\[
b>0,\qquad 0<\vartheta<\frac{\pi}{2},
\]
introduce a new parameter $\tau$ by
$\frac{dr}{d\tau}=bc.$ Then  the system becomes} \begin{align}
          \frac{d\vartheta}{d\tau}=&\frac{cs}{y}(4-3y)\label{eq.1}\\
        \frac{dy}{d\tau}=&(2-3y)+(4-3y)c^2\label{eq.2}\\
        \frac{db}{d\tau}=&\frac{b}{y}(3y-2)   \label{eq.3}
\end{align}
The first two equations form a closed autonomous system in the plane.
On the region
\[
0\leq\vartheta\leq\frac{\pi}{2},
\qquad
\frac23\leq y\leq1,
\]
we have $4-3y>0$, so equation \eqref{eq.1} vanishes only when
$s=0$ or $c=0$. If $s=0$, then $\vartheta=0$ and $c^2=1$.
Substituting $c^2=1$ into \eqref{eq.2}, we obtain $y=1$, and hence
$p_0=(0,1).$
If $c=0$, then $\vartheta=\pi/2$, and equation \eqref{eq.2} gives
$y=2/3$, yielding the second equilibrium
$p_1=\left(\frac{\pi}{2},\frac23\right).$
Thus $p_0$ and $p_1$ are boundary equilibria of the rescaled planar
system.

{ Let $P$ denote the vector field defined by the right-hand sides of
\eqref{eq.1} and \eqref{eq.2}. Its Jacobian matrices at the two boundary
equilibria are
\begin{equation}
    DP(p_0)=
    \begin{pmatrix}
        1&0\\
        0&-6
    \end{pmatrix},
    \qquad
    DP(p_1)=
    \begin{pmatrix}
        -3&0\\
        0&-3
    \end{pmatrix}.
\end{equation}
Therefore, $p_0$ is a saddle point, whereas $p_1$ is an attracting
hyperbolic equilibrium of the rescaled planar system.

Consider the region
\[
\mathcal R=
\left\{
0<\vartheta<\frac{\pi}{2},
\quad
\frac{2}{3}<y<1
\right\}.
\]
On the boundary $y=\frac{2}{3}$, equation \eqref{eq.2} gives
$\dot y=2c^2>0$, whereas on $y=1$ it gives $\dot y=-s^2<0$.
Furthermore, throughout $\mathcal R$ one has $s>0$, $c>0$, and
$4-3y>1$, and therefore $\dot\vartheta>0$. Thus $\mathcal R$ is
forward invariant. Since $\vartheta$ is strictly increasing and cannot
converge to a value smaller than $\frac{\pi}{2}$, equation
\eqref{eq.2} implies that every trajectory in $\mathcal R$ converges
to $p_1$ as $\tau\to+\infty$. In particular, the trajectory shown in
Figure~\ref{fig:mi_imagen}, which leaves $p_0$ and enters
$\mathcal R$, converges to $p_1$.

\begin{figure}[htbp]
  \centering
  \includegraphics[width=0.8\textwidth]{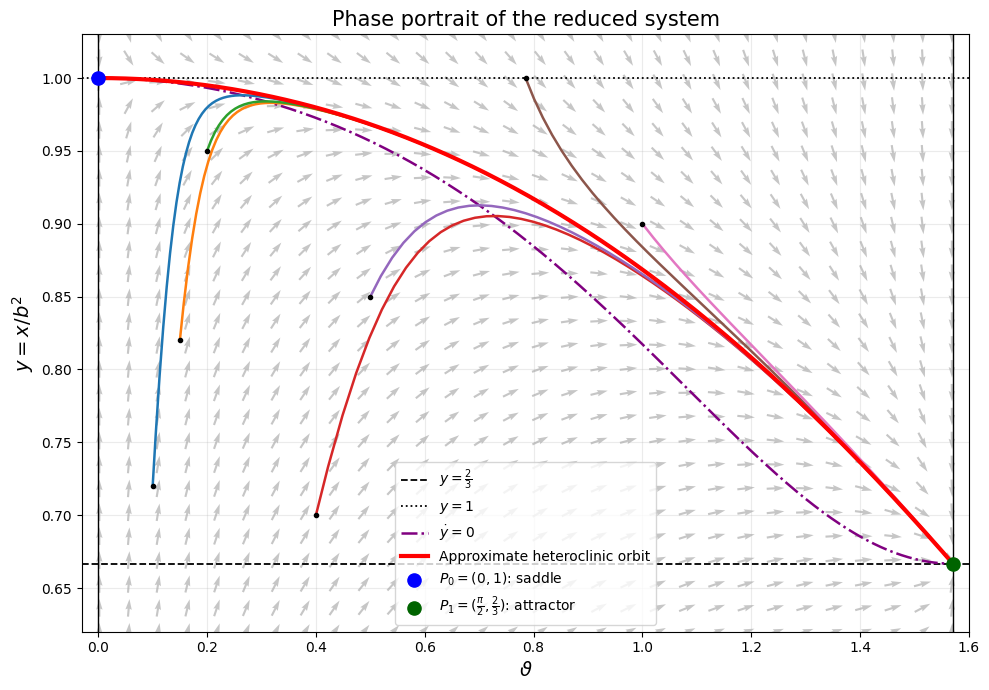}
  \caption{The autonomous system \eqref{eq.1}--\eqref{eq.2}.}
  \label{fig:mi_imagen}
\end{figure}

The first boundary equilibrium $p_0=(0,1)$ has the following geometric
interpretation. Substitution into the original system
\eqref{eq:S5-strong-ODE} gives
\[
\vartheta'=0,\qquad b'=1,\qquad x'=2b.
\]
Thus
\[
b(r)=r-r_0,\qquad x(r)=(r-r_0)^2.
\]
Consequently, the skew Nijenhuis form satisfies
\[
\widetilde N
=-4bs\bigl(b\,\eta\wedge A-dr\wedge B\bigr)=0,
\]
and hence $N_J=0$. Moreover,
\[ \omega=-b^2\omega_3-b\,\eta\wedge dr
\]
is closed, so $p_0$ corresponds to the K\"ahler cone over $S^5$.

The point $p_1=(\pi/2,2/3)$ is a boundary equilibrium of the rescaled
system, rather than an equilibrium of the original system
\eqref{eq:S5-strong-ODE}, whose right-hand side is singular when $c=0$.
Nevertheless, it describes the limiting configuration of trajectories
in $\mathcal R$. Indeed, along every such trajectory,
\[
b\longrightarrow\frac{3}{2\mu},
\qquad
x\longrightarrow\frac{3}{2\mu^2}.
\]
Moreover, equations \eqref{eq:S5-SU3-ansatz} and \eqref{eq:S5-N-form} give $\widetilde N=-4\mu\Psi_+.$

Since $\mu>0$ and $\Psi_+$ is a stable form, it follows that
$\widetilde N\neq0$.}

\begin{remark}
The computation above starts with the full geometric \(SO(3)\)-rotation of the
horizontal \(SU(2)\)-triple, not merely the one-angle subfamily. On the
non-integrable skew-Nijenhuis locus, \eqref{eq:S5-skew-N-system} forces the
azimuthal angle \(\varphi\) to be constant, while the third Euler angle is only
the phase of \(\Psi\). Thus, modulo the fixed \(U(1)\)-symmetry, the full
ansatz reduces to \((x,b,\vartheta)\).

Moreover, \eqref{eq:S5-pi1-2nu1} shows directly that the strong locus in this
ansatz is almost-CYT. This is consistent with Theorem~\ref{thr3.16-dimsix}, which shows that
every non-integrable almost-SKT six-manifold has vanishing Bismut Ricci form.
Hence the corresponding  unprojected Bismut--Ricci flow is necessarily
stationary, despite the fact that \(R^B\neq0\).
\end{remark}

\subsection{Quaternionic rotations and compact examples}
 Recall that a Sasaki-with-torsion manifold is a normal almost
contact metric manifold $(M,\phi,\xi,\eta,g)$ whose Reeb vector
field $\xi$ is Killing. Such a structure admits a unique metric
connection with totally skew-symmetric torsion preserving
$\phi$, $\xi$, and $\eta$, called its characteristic connection.
The structure is called strong when its characteristic torsion
is closed \cite{BrienzaFinoFowdarGrantcharov2026}.

\begin{thm}
\label{thm:ST-quaternionic-rotation}
Let $(M^5,\varphi,\xi,\eta,g,\nabla)$ be a compact simply connected strong
Sasaki-with-torsion manifold whose characteristic torsion is $H$.

Denote by
$F_1$ its fundamental $2$-form,
$F_1(X,Y)=g(X,\varphi Y),$
and by $\rho^\nabla$ the Ricci form of the characteristic connection
$\nabla$. Assume
that
\begin{equation}\label{eq:ST-rotation-hyp}
 \rho^\nabla=0,
 \quad
 \mathfrak c:=\frac12\langle d\eta,F_1\rangle_g\neq0, \,
 \quad\text{everywhere on }M.
\end{equation}
Then \(X=S^1_s\times M\), equipped with \(G=ds^2+g\), carries an
\(S^2\)-family \(\{J_q\}_{q\in S^2}\) of orthogonal almost complex structures
such that
\begin{equation}\label{eq:ST-rotation-properties}
 \widetilde N_{J_q}\in\Omega^3(X),
 \qquad
 T^B_{J_q}=H,
 \qquad
 dT^B_{J_q}=0,
 \qquad
 \rho^B_{J_q}=0.
\end{equation}
There are two antipodal parameters \(\pm q_0\) for which \(J_q\) is
integrable.  For every
\(q\in S^2\setminus\{\pm q_0\}\),
\begin{equation}\label{eq:ST-rotation-nonintegrable}
N_{J_q}(p)\neq0, 
\,
\forall p\in X.
\end{equation}
Moreover the characteristic curvature satisfies
\begin{equation}\label{eq:ST-rotation-curvature}
 R^B_{J_q}=R^\nabla.
\end{equation}
In particular, if \(\nabla\) is non-flat, then every non-integrable member of
the family is a strong ABHE structure which is not 
Bismut-flat.
\end{thm}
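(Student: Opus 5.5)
The plan is to build everything from the characteristic connection $\nabla$ of $M$, extended trivially to $X=S^1\times M$ by $\nabla\partial_s=0$. This extension $\nabla^X$ is metric for $G=ds^2+g$ and has totally skew torsion $H$ (pulled back). Its curvature is $R^\nabla$, and its torsion satisfies $dH=0$.

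\textbf{Step 1: a parallel quaternionic triple.} On the horizontal distribution $\mathcal H=\ker\eta$, $\nabla$ preserves $\varphi|_{\mathcal H}$, so the holonomy of $\nabla$ on $\mathcal H$ lies in $U(2)$. The induced connection on $\Lambda^{2,0}\mathcal H$ has curvature given by $\rho^\nabla$, up to the convention factor. Since $\rho^\nabla=0$ and $M$ is simply connected, this line bundle has a global $\nabla$-parallel unit section. Its real and imaginary parts give $F_2,F_3$, so $(F_1,F_2,F_3)$ is a $\nabla$-parallel hyperk\"ahler-type triple on $\mathcal H$, with endomorphisms $I_1=\varphi|_{\mathcal H},I_2,I_3$.

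\textbf{Step 2: the family $J_q$.} For $q=(q_1,q_2,q_3)\in S^2$ set $I_q=\sum q_iI_i$ on $\mathcal H$, and $J_q\partial_s=\xi$, $J_q\xi=-\partial_s$. Each $J_q$ is $G$-orthogonal and $\nabla^X$-parallel, because $\xi$, $\partial_s$ and the $I_i$ are parallel. Hence $\nabla^X$ is a Hermitian connection with skew torsion for $(G,J_q)$. By the uniqueness of the characteristic connection \cite{FriedrichIvanov2002}, the existence of such a connection already forces $\widetilde N_{J_q}\in\Omega^3$ (class $\mathcal G_1$), and identifies $\nabla^B_{J_q}=\nabla^X$. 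Therefore $T^B_{J_q}=H$, $dT^B=0$ and $R^B_{J_q}=R^\nabla$, which is \eqref{eq:ST-rotation-curvature}. The Ricci form $\rho^B_{J_q}$ is the trace of $R^\nabla$ against $J_q$. Since $R^\nabla$ acts only on $\mathcal H$ and takes values in $\mathfrak{su}(2)=\mathfrak{sp}(1)$, which commutes with every $I_q$, this trace vanishes. (Here I use that holonomy in $SU(2)$ is the centralizer of the triple.)

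\textbf{Step 3: the Nijenhuis tensor, the main obstacle.} We need $N_{J_q}$ explicitly. For a parallel $J$, the Nijenhuis tensor is expressed through the $(3,0)+(0,3)$ part of the torsion: $\widetilde N$ is, up to a constant factor, the $J$-anti-invariant part $H^{-}_{J_q}$ of $H$. So the task is to decompose $H$. For a Sasaki-with-torsion structure,
\[
H=\eta\wedge d\eta+H_{\mathcal H},
\]
where $H_{\mathcal H}\in\Lambda^3\mathcal H$ must vanish in rank $4$ by type considerations. I would verify this via the normality of $\varphi$, or otherwise track the extra term. Writing
\[
d\eta|_{\mathcal H}=\mathfrak c\,F_1+a_2F_2+a_3F_3+(\text{anti-self-dual part}),
\]
the anti-self-dual part is $I_q$-invariant for all $q$. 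Moreover $\eta\wedge F_i$ has type $(3,0)+(0,3)$ for $J_q$ exactly in the directions orthogonal to $q$. Parallel-transporting the triple and invoking the Sasaki-with-torsion condition should force $d\eta$ to be $\varphi$-invariant, so $a_2=a_3=0$. This yields
\[
\widetilde N_{J_q}\propto \mathfrak c\,\eta\wedge\bigl(F_1-q_1 F_q\bigr),\qquad F_q=\sum q_iF_i .
\]
This vanishes exactly when $q=\pm(1,0,0)=:\pm q_0$, the original structure and its conjugate. For $q\neq\pm q_0$, the form $F_1-q_1F_q$ is a nonzero constant-coefficient combination of the $F_i$, and $\mathfrak c\neq0$ everywhere. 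Hence $N_{J_q}(p)\neq0$ at every $p$, which is \eqref{eq:ST-rotation-nonintegrable}. The last sentence of the theorem then follows directly from \eqref{eq:ST-rotation-curvature}. I expect the delicate points to be the exact normalization linking $N$ and $H^{-}$, and confirming that $d\eta$ has no $F_2,F_3$ components.
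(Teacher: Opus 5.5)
Your proposal is correct. Steps 1 and 2 coincide with the paper's proof: your parallel unit section of $\Lambda^{2,0}\mathcal H$ is a concrete form of the paper's reduction of holonomy to $SU(2)$, and you identify the product connection as the characteristic connection by Friedrich--Ivanov uniqueness, exactly as the paper does.

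\textbf{Step 3 takes a different route.} The paper never computes $N_{J_q}$. It notes that the vertical form $\zeta=\eta+i\,ds$ is of type $(1,0)$ for every $J_q$ and satisfies $d\zeta=d\eta$. Since $(d\zeta)^{0,2}$ is algebraic in $N_{J_q}$, the pointwise non-vanishing of $(d\eta)^{0,2}_{I_q}=\mathfrak c\,(F_1-q_1F_q)^{0,2}$ detects non-integrability; the poles are then handled by normality. You instead use two facts:
\begin{itemize}
\item for a $J$-parallel metric connection with skew torsion $T$, the lowered Nijenhuis tensor is a universal nonzero multiple of $T^{(3,0)+(0,3)}$;
\item the splitting $H=\eta\wedge d\eta+H_{\mathcal H}$ holds, which follows from $\nabla\xi=0$, since this gives $\iota_\xi H=d\eta$.
\end{itemize}
The paper's route needs only $d\eta$. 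Yours needs the torsion identity, but in return gives an explicit formula for $\widetilde N_{J_q}$, with $|\widetilde N_{J_q}|$ proportional to $|\mathfrak c|\sqrt{1-q_1^2}$. It also treats the integrable poles and the non-integrable members in one computation. Both routes rest on the same input, $d\eta\in\Lambda^{1,1}_{I_1}$.

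\textbf{Three statements in your Step 3 need correcting}, though none affects the conclusion.
\begin{itemize}
\item $H_{\mathcal H}$ need not vanish. $\Lambda^3\mathcal H^*$ has rank four, and $H_{\mathcal H}$ is in general the term $d^\varphi F$, which is nonzero as soon as $dF\neq0$. What is true, and all you need, is that $\Lambda^3\mathcal H^*\otimes\mathbb C=\Lambda^{2,1}_{I_q}\oplus\Lambda^{1,2}_{I_q}$ involves neither $\zeta$ nor $\bar\zeta$. Hence $H_{\mathcal H}$ has no $(3,0)+(0,3)$-component for any $J_q$.
\item The vanishing $a_2=a_3=0$ has nothing to do with parallel transport. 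It is the $\xi$-component of the normality condition, which reads $d\eta(\varphi X,\varphi Y)=d\eta(X,Y)$. This is precisely the fact the paper invokes, and normality is part of the definition of Sasaki-with-torsion, so this step closes immediately.
\item The form $\eta\wedge(F_1-q_1F_q)$ is of mixed type, not $(3,0)+(0,3)$; the same holds for $\eta\wedge F_i$ with $F_i\perp F_q$. The correct statement is that $\widetilde N_{J_q}$ is a nonzero constant times $\mathfrak c\cdot\tfrac12(\zeta\wedge\beta^{2,0}+\bar\zeta\wedge\beta^{0,2})$, where $\beta=F_1-q_1F_q$ and $\zeta$ is the vertical $(1,0)$-form. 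This expression contains a $ds\wedge(\cdot)$ term but vanishes exactly when $\mathfrak c\beta=0$, so your non-vanishing argument and your identification of the poles $\pm(1,0,0)$ go through.
\end{itemize}
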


\begin{proof}
Put \(\mathcal H=\ker\eta\) and \(I_1=\varphi|_{\mathcal H}\).  The
characteristic connection preserves \(g\), \(\xi\) and \(I_1\), so its
holonomy on the oriented rank-four bundle \(\mathcal H\) lies in \(U(2)\).
The vanishing of \(\rho^\nabla\) is precisely the vanishing of the curvature
of the determinant connection.  Thus the restricted holonomy lies in
\(SU(2)\).  Since \(M\) is simply connected, full and restricted holonomy
coincide, and consequently \(\mathcal H\) admits a global
\(\nabla\)-parallel quaternionic triple
\begin{equation}\label{eq:ST-quaternionic-triple}
 I_1,I_2,I_3,
 \qquad
 I_aI_b=-\delta_{ab}+\varepsilon_{abc}I_c.
\end{equation}
For \(q=(q_1,q_2,q_3)\in S^2\), set
\[
 I_q=q_1I_1+q_2I_2+q_3I_3
\]
and define
\begin{equation}\label{eq:ST-Jq-definition}
 J_q|_{\mathcal H}=I_q,
 \qquad
 J_q\xi=\partial_s,
 \qquad
 J_q\partial_s=-\xi.
\end{equation}
The product connection
\[
 D=d_{S^1}\oplus\nabla
\]
preserves both \(G\) and every \(J_q\), and its torsion is the pull-back of
\(H\).  Hence \(D\) is a Hermitian connection with totally skew torsion.  By
the uniqueness theorem for the characteristic connection
\cite{FriedrichIvanov2002}, \(D\) is the characteristic connection of
\((G,J_q)\).  In particular \(\widetilde N_{J_q}\) is a three-form and
\(T^B_{J_q}=H\).  The strong Sasaki-with-torsion hypothesis gives \(dH=0\),
proving the first three assertions in \eqref{eq:ST-rotation-properties}.
Moreover $\operatorname{Hol}(D)\subset SU(2)\subset SU(3),$
where the last inclusion is taken with respect to any \(J_q\).  Therefore the
first Bismut Ricci form vanishes.  Since \(D\) is the product connection, its
curvature has no circle component and \eqref{eq:ST-rotation-curvature}
follows.

It remains to determine the integrable parameters.  Let \(F_a\) be the
fundamental two-form of \(I_a\) on \(\mathcal H\), with the orientation for
which \(F_1,F_2,F_3\) span \(\Lambda^+\mathcal H^*\).  Normality of the
Sasaki-with-torsion structure implies that \(d\eta\) is of type \((1,1)\)
with respect to \(I_1\).  Thus pointwise
\begin{equation}\label{eq:ST-deta-decomp}
 d\eta=aF_1+\gamma_-,
 \qquad
 \gamma_-\in\Lambda^-\mathcal H^*,
\end{equation}
where \(a\neq0\) by \eqref{eq:ST-rotation-hyp}.  For the orthogonal complex
structure \(I_q\), the anti-self-dual summand is contained in
\(\Lambda^{1,1}_{I_q}\), while
\[
 F_q=q_1F_1+q_2F_2+q_3F_3
\]
spans the self-dual \((1,1)\)-line.  Consequently the projection of \(F_1\)
to the real \((2,0)+(0,2)\) subspace for \(I_q\) has squared norm
proportional to \(1-q_1^2\).  Hence
\begin{equation}\label{eq:ST-deta-02}
 (d\eta)^{0,2}_{I_q}\neq0
 \quad\Longleftrightarrow\quad
 q\neq\pm(1,0,0).
\end{equation}
On \(X\), the one-form \(\zeta=\eta+i\,ds\) is of type \((1,0)\) for
\(J_q\) and \(d\zeta=d\eta\).  Thus \eqref{eq:ST-deta-02} violates the
Newlander--Nirenberg type condition \(d\Omega^{1,0}\subset
\Omega^{2,0}\oplus\Omega^{1,1}\), proving that \(J_q\) is non-integrable for
all \(q\neq\pm(1,0,0)\).  At the two poles \(I_q=\pm I_1\), normality gives
the usual integrable product complex structures.  This proves
\eqref{eq:ST-rotation-nonintegrable}.
\end{proof}

 \begin{remark}
The Riemannian metric is not modified by the construction: the quaternionic
rotation produces an $S^2$-family of compatible almost complex structures on
a fixed Riemannian manifold, all sharing the same characteristic connection,
torsion and characteristic curvature.  In particular these examples are
non-integrable deformations of the almost complex structure at fixed metric,
which is precisely the situation studied in Section~\ref{Section6}.
\end{remark}
 The $L^{a,b,c}$ manifolds considered as strong $\nabla$-Einstein
five-manifolds in \cite{BrienzaFinoFowdarGrantcharov2026} are regular when
their parameters satisfy the regularity and coprimality conditions of
\cite[Section~4.2]{CouzensGauntlettMartelliSparks2019}. For these parameter
values, $L^{a,b,c}$ is a smooth compact manifold diffeomorphic to
$S^2\times S^3$. Applying the previous construction therefore gives
compact non-Bismut-flat examples on $S^1\times S^2\times S^3$.
\begin{cor}
\label{cor:Labc-quaternionic-examples}
For every such regular manifold $L^{a,b,c}$, the compact six-manifold
\begin{equation}\label{eq:Labc-topology-six}
 S^1\times L^{a,b,c}\cong S^1\times S^2\times S^3
\end{equation}
carries an \(S^2\setminus\{\pm q_0\}\)-family of almost Hermitian structures
satisfying
\begin{equation}\label{eq:Labc-main-properties}
 \widetilde N\in\Omega^3,
 \qquad
 \widetilde N\neq0,
 \qquad
 dT^B=0,
 \qquad
 \rho^B=0,
 \qquad
 R^B\neq0.
\end{equation}
Thus these are  almost-SKT and almost-CYT structures with  nowhere vanishing skew Nijenhuis tensor and  non-flat
characteristic connection.
\end{cor}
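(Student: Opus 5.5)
The plan is to deduce Corollary~\ref{cor:Labc-quaternionic-examples} directly from Theorem~\ref{thm:ST-quaternionic-rotation}. The work is in checking that each regular $L^{a,b,c}$, with the strong $\nabla$-Einstein Sasaki-with-torsion structure of \cite{BrienzaFinoFowdarGrantcharov2026}, satisfies that theorem's hypotheses.

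First I would verify the standing assumptions. Under the regularity and coprimality conditions of \cite[Section~4.2]{CouzensGauntlettMartelliSparks2019}, $L^{a,b,c}$ is a smooth compact manifold diffeomorphic to $S^2\times S^3$, so it is simply connected. The structure is strong Sasaki-with-torsion by construction, so $dH=0$.

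Next I would check the two conditions in \eqref{eq:ST-rotation-hyp}. For $\rho^\nabla=0$, I would use the $\nabla$-Einstein condition in \cite{BrienzaFinoFowdarGrantcharov2026}. In the strong setting, the transverse Ricci form of the characteristic connection is a constant multiple of $d\eta$ plus exact terms, and for the particular $L^{a,b,c}$ family considered there the relevant constant vanishes. This is the Calabi--Yau-with-torsion type normalisation that makes these examples ``$\nabla$-Ricci flat''; I would read it off from the explicit curvature formulas in that paper. For $\mathfrak c=\tfrac12\langle d\eta,F_1\rangle_g\neq0$ everywhere, I would use the explicit local form of $\eta$ on $L^{a,b,c}$ and compute the $F_1$-component of $d\eta$ as a function of the cohomogeneity coordinate. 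I would then show it has constant sign on the closed interval, including the endpoints where the $S^1$-fibres collapse; at those endpoints smoothness forces $d\eta$ to be the Kähler-type form of the collapsed factor. \textbf{I expect this step to be the main obstacle}, since it depends on the explicit profile functions. I would first try to show that $\mathfrak c$ is in fact constant, as in the Sasaki case where $d\eta=2F_1$.

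With the hypotheses verified, Theorem~\ref{thm:ST-quaternionic-rotation} applied to $X=S^1\times L^{a,b,c}\cong S^1\times S^2\times S^3$ gives, for each $q\in S^2\setminus\{\pm q_0\}$, the following properties: $\widetilde N\in\Omega^3$, $N_{J_q}$ nowhere zero (hence $\widetilde N\neq0$), $dT^B=H$ closed, and $\rho^B=0$. It also gives $R^B_{J_q}=R^\nabla$. So it remains only to see that $\nabla$ is non-flat on $L^{a,b,c}$. The approach is to exhibit a nonzero component of $R^\nabla$. The simplest route is to note that the characteristic torsion $H$ is not $\nabla$-parallel with the constraints of flat skew-torsion geometry, or to compute $\operatorname{Ric}^\nabla$ from the formulas in \cite{BrienzaFinoFowdarGrantcharov2026} and show that its symmetric part, the nonzero $\nabla$-Einstein tensor on $\mathcal H$, does not vanish.

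As a fallback, I would argue by contradiction. A compact simply connected manifold with a flat metric connection with closed skew torsion is locally a product of compact Lie groups and $S^7$ (Cartan--Schouten). Simply connected compact Lie groups of dimension five do not exist, and $S^2\times S^3$ is not of that form, so $\nabla$ cannot be flat. Hence $R^B\neq0$, which completes \eqref{eq:Labc-main-properties}.
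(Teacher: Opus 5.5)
Your overall reduction matches the paper's. You apply Theorem~\ref{thm:ST-quaternionic-rotation}, with simple connectivity coming from $L^{a,b,c}\cong S^2\times S^3$ and the hypotheses \eqref{eq:ST-rotation-hyp} taken from \cite{BrienzaFinoFowdarGrantcharov2026}. The genuine difference is the non-flatness step.

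The paper uses the explicit GK data. The transverse scalar curvature $R=2(1+a_1)/(u-v)$ is non-constant, and the non-constant case of \cite[Theorem~5.16]{BrienzaFinoFowdarGrantcharov2026} is non-flat. Hence $R^\nabla\neq0$, and $R^B\neq0$ by \eqref{eq:ST-rotation-curvature}.

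Your fallback is purely topological, and it is correct. $L^{a,b,c}$ is compact, hence complete, and simply connected. So if $\nabla$ were flat, Cartan--Schouten would make $(L^{a,b,c},g)$ a Riemannian product of simply connected compact simple Lie groups, round $S^7$'s and a Euclidean factor. Compactness excludes the Euclidean factor, and the remaining factors have dimensions $3,7,8,10,\dots$, no sum of which equals $5$. Phrase it this way: ``there is no simply connected compact Lie group of dimension five'' alone does not rule out products.

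This route needs neither the profile functions nor a reading of Theorem~5.16. It also shows more: the characteristic connection of \emph{any} compact simply connected input to Theorem~\ref{thm:ST-quaternionic-rotation} is automatically non-flat. The cost is invoking the classification theorem. Your primary route (non-parallel torsion, a nonzero $\nabla$-Einstein tensor) is not an argument as written, so rely on the fallback.

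On the hypotheses, three tightenings:
\begin{enumerate}
\item For $\rho^\nabla=0$, cite the precise input the paper uses. This is that the constant in the strong $\nabla$-Einstein condition equals $2$ for this family \cite[Theorem~5.12 and the discussion after Remark~5.13]{BrienzaFinoFowdarGrantcharov2026}. Your heuristic ``constant multiple of $d\eta$ plus exact terms'' would not by itself give $\rho^\nabla=0$.
\item The paper does not check $\mathfrak c\neq0$ separately, and the step you feared is short. In the GK description, $d\eta$ is a fixed nonzero multiple of the transverse Ricci form $\rho_K$, and $g_H=e^hg_K$ with $e^h=R/8>0$. Hence $\mathfrak c$ is a fixed nonzero multiple of $e^{-h}\langle\rho_K,\omega_K\rangle_{g_K}=e^{-h}R/2=4$. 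So $\mathfrak c$ is a nonzero constant, as you guessed, and no endpoint analysis is needed.
\item The correct statement is $T^B=H$ and $dT^B=dH=0$; writing ``$dT^B=H$'' is a slip.
\end{enumerate}
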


\begin{proof}
{ The regular manifolds $L^{a,b,c}$ considered in
\cite{BrienzaFinoFowdarGrantcharov2026} form a family of strong
$\nabla$-Einstein five-manifolds for which the constant in the strong
$\nabla$-Einstein condition is equal to $2$; see
Theorem~5.12 and the discussion following Remark~5.13 in
\cite{BrienzaFinoFowdarGrantcharov2026}.}
These are the GK
\(L^{a,b,c}\) geometries of \cite[Section~4.2]{CouzensGauntlettMartelliSparks2019},
not the Sasaki--Einstein family bearing the same notation.  The regularity
analysis in that reference gives
\begin{equation}\label{eq:Labc-integers}
 -a+b=c+d,
 \qquad
 b>c\ge d>0,
 \qquad
 b>a>0,
\end{equation}
with the stated coprimality assumptions, and shows that
\(L^{a,b,c}\cong S^2\times S^3\).  In particular \(L^{a,b,c}\) is simply
connected, so Theorem~\ref{thm:ST-quaternionic-rotation} applies.

It remains only to check non-flatness.  In the notation of
\cite{CouzensGauntlettMartelliSparks2019}, for complex dimension three the
transverse scalar curvature $R$ is related to the conformal factor by
$e^h=R/8$, while on the regular $L^{a,b,c}$ family
\[
 e^h=\frac{1+a_1}{4(u-v)}.
\]
Hence
\begin{equation}\label{eq:Labc-transverse-scalar}
  R=\frac{2(1+a_1)}{u-v}
\end{equation}
is non-constant,  where $h,a_1,u,v$ denote the quantities written
$B,\alpha,\eta,\xi$ in \cite[Section~4.2]{CouzensGauntlettMartelliSparks2019}. The non-constant case of
\cite[Theorem~5.16]{BrienzaFinoFowdarGrantcharov2026} is non-flat.  Thus
\(R^\nabla\neq0\), and \eqref{eq:ST-rotation-curvature} gives \(R^B\neq0\).
\end{proof}

\begin{remark}
Corollary~\ref{cor:Labc-quaternionic-examples} answers affirmatively the question,
raised in the discussion following Example~4.12 in
\cite{FinoFowdar2026}, of whether there exist non-Bismut-flat
almost-SKT and almost-CYT six-manifolds with $N_J\neq0$. Notice that
each regular $L^{a,b,c}$ produces a full two-parameter sphere of almost
Hermitian structures with the same characteristic connection, closed
torsion, and curvature; only the two antipodal members are integrable.
\end{remark}

\begin{remark}
For instance the integers
$(a,b,c,d)=(1,5,3,1)$
satisfy \eqref{eq:Labc-integers} and the coprimality assumptions of
\cite{CouzensGauntlettMartelliSparks2019}.  Thus
\(S^1\times L^{1,5,3}\cong S^1\times S^2\times S^3\) gives a concrete compact
family satisfying \eqref{eq:Labc-main-properties}.
\end{remark}
A second quaternionic-rotation construction applies directly to compact
simply connected quasi-regular BHE threefolds.
\begin{thm}
\label{thm:BHE-threefold-quaternionic-rotation}
Let \((N^6,g,J_1)\) be a compact simply connected non-K\"ahler quasi-regular
Bismut--Hermitian--Einstein threefold.  Let
\[
 \mathcal V=\operatorname{span}\{V,W\},\qquad W=J_1V,
 \qquad TN=\mathcal V\oplus\mathcal H,
\]
be its canonical Bismut-parallel splitting.  Then \(\mathcal H\) admits a
global \(\nabla^B\)-parallel quaternionic triple \((I_1,I_2,I_3)\), with
\(I_1=J_1|_{\mathcal H}\).  For every \(q=(q_1,q_2,q_3)\in S^2\), define
\begin{equation}\label{eq:BHE-threefold-Jq}
 I_q=q_1I_1+q_2I_2+q_3I_3,
 \qquad
 J_q|_{\mathcal H}=I_q,
 \qquad
 J_qV=W,\quad J_qW=-V.
\end{equation}
Then the original Bismut connection is the characteristic connection of
\((g,J_q)\) for every \(q\), and
\begin{equation}\label{eq:BHE-threefold-rotation-properties}
 \widetilde N_{J_q}\in\Omega^3(N),
 \qquad
 T^B_{J_q}=T^B_{J_1},
 \qquad
 dT^B_{J_q}=0,
 \qquad
 \rho^B_{J_q}=0,
 \qquad
 R^B_{J_q}=R^B_{J_1}.
\end{equation}
Moreover
\begin{equation}\label{eq:BHE-threefold-nonintegrable}
 N_{J_q}(p)\neq0
\quad
\text{for every }p\in N
\text{ and every }q\in S^2\setminus\{\pm(1,0,0)\}.
\end{equation}
Thus, if the initial BHE connection is non-flat, every non-integrable rotated
member is a strong ABHE structure which is not Bismut-flat.
\end{thm}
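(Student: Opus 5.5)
The plan mirrors the proof of Theorem~\ref{thm:ST-quaternionic-rotation}, with the product $S^1\times M$ replaced by the Bismut-parallel splitting of the BHE threefold. There are three steps.

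\emph{Step 1: the splitting and the triple.} Since $(N,g,J_1)$ is non-K\"ahler BHE, we have $dT^B=0$ and $\rho^B=0$, so $\operatorname{Hol}(\nabla^B)\subset SU(3)$. The Lee form is $\nabla^B$-parallel in the compact BHE setting, as recalled from \cite{KennonStreets} with soliton potential constant in the BHE case. This gives the parallel field $V=\theta^\sharp$ (normalized), and $W=J_1V$ is parallel because $\nabla^BJ_1=0$. Hence $\mathcal V$ and $\mathcal H=\mathcal V^\perp$ are $\nabla^B$-parallel, and $\operatorname{Hol}(\nabla^B)$ fixes $V$ and $W$. It therefore lies in the stabilizer $SU(2)\subset SU(3)$ acting on $\mathcal H$. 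Since $N$ is simply connected, full and restricted holonomy coincide. So the $SU(2)$-reduction yields global $\nabla^B$-parallel $I_2,I_3$ completing $I_1=J_1|_{\mathcal H}$ to a quaternionic triple: $SU(2)=Sp(1)$ acts on $\mathcal H\cong\mathbb H$ commuting with the anti-self-dual triple containing $I_1$. I expect this holonomy reduction, together with identifying that the canonical splitting is the one spanned by the parallel Lee field, to be the only step that requires care. If needed, I would take the parallel splitting as given by the hypothesis ``canonical Bismut-parallel splitting''.

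\emph{Step 2: the characteristic connection.} Each $J_q$ in \eqref{eq:BHE-threefold-Jq} is $g$-orthogonal and squares to $-1$ because $q\in S^2$. It is $\nabla^B$-parallel because $I_1,I_2,I_3,V,W$ all are. Hence $\nabla^B$ is a $J_q$-Hermitian connection with totally skew torsion $T^B_{J_1}$. By the uniqueness theorem of \cite{FriedrichIvanov2002}, the lowered Nijenhuis tensor of $J_q$ is a three-form and $\nabla^B$ is the characteristic connection of $(g,J_q)$. This gives $T^B_{J_q}=T^B_{J_1}$, $dT^B_{J_q}=0$ and $R^B_{J_q}=R^B_{J_1}$. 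Moreover $\operatorname{Hol}\subset SU(2)$, which sits in the $SU(3)$ determined by any $J_q$, so $\rho^B_{J_q}=0$.

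\emph{Step 3: non-integrability.} Consider the $(1,0)$-form $\zeta=V^\flat+iW^\flat$ for $J_q$, with the sign chosen so that it is of type $(1,0)$ for every $q$, since $J_q$ agrees with $J_1$ on $\mathcal V$. Because $V,W$ are $\nabla^B$-parallel, $dV^\flat=\iota_VT^B$ and $dW^\flat=\iota_WT^B$. I would show that the $\mathcal H$-components of these two forms are pointwise of type $(1,1)$ for $I_1$, which follows from integrability of $J_1$. Next I would show that $(dW^\flat)|_{\mathcal H}$ has nonzero self-dual component along $F_1$ at every point. This is the analogue of $\mathfrak c\ne0$ in Theorem~\ref{thm:ST-quaternionic-rotation}. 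It comes from the non-K\"ahler BHE structure: the torsion satisfies $T^B=W^\flat\wedge dV^\flat$-type formulas with $\langle \iota_W T^B,F_1\rangle=|\theta|\neq0$ constant, since $\theta$ is parallel and nonzero. Then, exactly as in \eqref{eq:ST-deta-02}, $(d\zeta)^{0,2}_{J_q}$ has norm proportional to $\sqrt{1-q_1^2}$ times this nonzero coefficient. This is nonzero everywhere for $q\neq\pm(1,0,0)$, and so the Newlander--Nirenberg criterion fails at every point and $N_{J_q}(p)\neq0$. The last sentence of the theorem then follows from $R^B_{J_q}=R^B_{J_1}$.
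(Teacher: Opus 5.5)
Your Steps 1 and 2 match the paper's proof. Step 3 uses the paper's mechanism: a vertical $(1,0)$-form $\zeta$ whose differential, restricted to $\mathcal H$, lies in $\mathbb C F_1\oplus(\Lambda^-\otimes\mathbb C)$ with nowhere vanishing $F_1$-coefficient, so that its $(0,2)$-part for $I_q$ is a nonzero multiple of $(F_1-q_1F_q)^{0,2}$. The gap is that you establish neither of these two pointwise inputs.

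\emph{First gap: the $(1,1)$ type.} You claim the $\mathcal H$-parts of $dV^\flat=\iota_VT^B$ and $dW^\flat=\iota_WT^B$ are of $I_1$-type $(1,1)$ because $J_1$ is integrable. This does not follow. For integrable $J_1$, $T^B$ has type $(2,1)+(1,2)$. Writing $V=v+\bar v$ with $v\in T^{1,0}$ gives $(\iota_VT^B)^{0,2}=\iota_v(T^B)^{1,2}$. Since $W=J_1V=iv-i\bar v$, it also gives $(\iota_WT^B)^{0,2}=i\,\iota_v(T^B)^{1,2}$. So $(d\zeta)^{0,2}_{J_1}=0$ holds identically and tells you nothing about $\iota_v(T^B)^{1,2}$.

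By Proposition~\ref{prop:fixed-J-gauge-obstruction}, $\iota_VT^B$ is $J_1$-invariant exactly when $\mathcal L_VJ_1=0$. What you need is therefore holomorphy of the canonical field, which is extra input from the BHE structure theory. This is not cosmetic. Without it, the self-dual parts $a,b\in\Lambda^+\cong\mathbb R^3$ of $\iota_VT^B|_{\mathcal H}$ and $\iota_WT^B|_{\mathcal H}$ satisfy only the single relation imposed at $q=(1,0,0)$. The $\mathcal H$-part of $(d\zeta)^{0,2}_{J_q}$ then vanishes whenever $b-(b\cdot q)q=\epsilon\,q\times a$, with a convention-dependent sign $\epsilon$. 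This can happen for $q\neq\pm(1,0,0)$, and there the test with $\zeta$ proves nothing.

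\emph{Second gap: the nonzero $F_1$-coefficient.} Your argument rests on $V\propto\theta^\sharp$ with $\theta$ parallel. You deduce this from the claim that the soliton potential of \cite{KennonStreets} is constant for compact BHE metrics. That claim is not proved; by \cite[Lemma~2.9]{ABLS} it is equivalent to the Gauduchon condition. Once you fall back on the canonical splitting as a hypothesis, you no longer know $V\propto\theta^\sharp$ at all. The ``$T^B=W^\flat\wedge dV^\flat$-type formula'' does not replace an argument.

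The paper obtains both inputs at once from the quasi-regular structure equations of \cite{ApostolovLahdiliLee2026}. There $d\eta_V=\theta^T$ is primitive of type $(1,1)$, hence anti-self-dual. Also $d\eta_W=-\rho(\omega_K^T)=-\tfrac12F_1-\rho_0$ with $\rho_0$ anti-self-dual. Hence the $(2,0)+(0,2)$-part of $d\zeta$ for $I_q$ is a fixed nonzero multiple of $F_1-q_1F_q$. This is exactly where the quasi-regularity hypothesis enters, and your proposal never uses it. To keep an intrinsic route you would have to prove $\mathcal L_VJ_1=0$ and $\theta(V)\neq0$. One possible way: use $V^\flat=\theta-df$, show $Vf=0$, and apply the standard identity expressing $\theta$ as a $J_1$-twisted trace of $T^B$ against $\omega$.
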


\begin{proof}
For a compact non-K\"ahler BHE threefold the canonical vector field \(V\) is
\(\nabla^B\)-parallel; see the structural discussion preceding
\cite[Theorem~1.1]{ApostolovLahdiliLee2026}.  Since \(\nabla^BJ_1=0\), the field
\(W=J_1V\) is parallel as well.  Hence \(\mathcal V\) and \(\mathcal H\) are
\(\nabla^B\)-parallel, and the holonomy on \(\mathcal H\) lies in \(U(2)\).
The BHE condition \(\rho^B=0\) says that the determinant curvature vanishes,
so the restricted holonomy on \(\mathcal H\) lies in \(SU(2)\).  Since \(N\)
is simply connected, full and restricted holonomy coincide.  The real
four-dimensional standard representation of \(SU(2)\) is quaternionic;
therefore \(\mathcal H\) carries a global parallel triple
\(I_1,I_2,I_3\) satisfying the quaternionic relations.

The same connection \(\nabla^B\) preserves \(g\), the vertical complex line,
and every \(I_q\), hence it preserves every \(J_q\).  Its torsion is the
original three-form \(T^B_{J_1}\).  Thus it is a metric \(J_q\)-connection
with totally skew torsion.  By uniqueness of the characteristic connection
\cite{FriedrichIvanov2002}, it is the characteristic connection of
\((g,J_q)\), proving the first two statements in
\eqref{eq:BHE-threefold-rotation-properties}.  The original BHE metric is
pluriclosed, hence \(dT^B_{J_1}=0\), so the strong condition is unchanged by
the rotation.  Moreover the holonomy acts trivially on the vertical complex
line and through \(SU(2)\) on \((\mathcal H,I_q)\).  Consequently it is
contained in \(SU(3)\) for every \(J_q\), giving \(\rho^B_{J_q}=0\).  The
connection itself has not changed, so neither has its curvature.

It remains to prove non-integrability.  In the quasi-regular description of
\cite{ApostolovLahdiliLee2026}, let \(\eta_V,\eta_W\) be the two connection
one-forms of the vertical torus.  On the horizontal K\"ahler orbifold one has
\begin{equation}\label{eq:BHE-threefold-curvatures}
 d\eta_V=\theta^T,
 \qquad
 d\eta_W=-\rho(\omega_K^T),
 \qquad
 g_H=\frac{R}{2}g_K^T,
\end{equation}
where \(R>0\), \(\theta^T\) is primitive of type \((1,1)\), and \(\rho\) is
the transverse Ricci form.  Let \(F_1\) be the fundamental form of
\((g_H,I_1)\).  Since \(F_1=(R/2)\omega_K^T\), the K\"ahler-surface Ricci
decomposition gives
\begin{equation}\label{eq:BHE-threefold-Ricci-decomp}
 \rho(\omega_K^T)=\frac{R}{4}\omega_K^T+\rho_0
 =\frac12F_1+\rho_0,
 \qquad
 \rho_0\in\Lambda^-\mathcal H^*.
\end{equation}
With the quaternionic orientation, both \(\theta^T\) and \(\rho_0\) are
anti-self-dual and hence remain primitive \((1,1)\)-forms for every \(I_q\).
If
\[
 F_q=q_1F_1+q_2F_2+q_3F_3
\]
is the fundamental form of \(I_q\), then the self-dual
\((2,0)+(0,2)\)-part of \(F_1\) is \(F_1-q_1F_q\). Therefore 
\begin{equation}\label{eq:BHE-threefold-curvature-02}
 \bigl(d\eta_W\bigr)^{2,0+0,2}_{I_q}
 =-\frac12\bigl(F_1-q_1F_q\bigr),
\end{equation}
which is nonzero exactly when \(q\neq\pm(1,0,0)\).

This conclusion does not require the rotated \(I_q\) to descend to the
orbifold quotient.  Indeed the vertical complex structure is fixed, so (up
to the harmless choice of sign convention) the global one-form
\[
 \zeta=\eta_V+i\eta_W
\]
is of type \((1,0)\) for every \(J_q\).  Since
\(d\eta_V=\theta^T\) stays of type \((1,1)\), whereas
\eqref{eq:BHE-threefold-curvature-02} gives a nonzero
\((0,2)\)-component of \(d\eta_W\), the form \(d\zeta\) has a nonzero
\((0,2)\)-part whenever \(q\neq\pm(1,0,0)\).  The Newlander--Nirenberg
criterion then shows that \(J_q\) is non-integrable.  Since its lowered
Nijenhuis tensor is already known to be a three-form, this proves
\eqref{eq:BHE-threefold-nonintegrable}.  At the two poles one recovers the
original horizontal complex structure and its negative, for which the two
curvature forms are \((1,1)\).  This completes the proof.
\end{proof}

\begin{cor}[Non-Bismut-flat almost-CYT families]
\label{cor:BHE-nonflat-families}

Consider any of the following compact simply connected BHE threefolds: \begin{enumerate} \item $S^3\times S^3$, endowed with any of the quasi-regular non-Samelson BHE metrics constructed in \cite[Theorem~1.6]{ApostolovLahdiliLee2026}; \item either of the manifolds 
\begin{equation*}
    \begin{split}
         N_1=& \bigl(\#^{3}(S^2\times S^4)\bigr) \mathbin{\#} \bigl(\#^{4}(S^3\times S^3)\bigr), \\
         N_2=& \bigl(\#^{5}(S^2\times S^4)\bigr) \mathbin{\#} \bigl(\#^{6}(S^3\times S^3)\bigr),
    \end{split}
\end{equation*}

endowed with the regular BHE structures constructed in \cite[Theorem~3(a),(b)]{ALLR2026}. \end{enumerate} Then, with the metric fixed, each of these BHE structures gives rise to an $S^2$-family of $g$-orthogonal almost complex structures $\{J_q\}_{q\in S^2}$ having the same characteristic connection. Exactly two antipodal members are integrable. For every $ q\in S^2\setminus\{\pm q_0\}, $ the corresponding almost Hermitian structure satisfies \[ \widetilde N_{J_q}\in\Omega^3, \qquad \widetilde N_{J_q}(p)\neq0 \quad\text{for every }p, \] and \[ dT^B_{J_q}=0, \qquad \rho^B_{J_q}=0, \qquad R^B_{J_q}\not\equiv0. \] In particular, all these manifolds admit non-integrable almost-SKT and almost-CYT structures with nowhere vanishing skew Nijenhuis tensor and non-flat characteristic connection.
\end{cor}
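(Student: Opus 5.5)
The plan is to reduce Corollary~\ref{cor:BHE-nonflat-families} to Theorem~\ref{thm:BHE-threefold-quaternionic-rotation}. That means checking, for each listed manifold, the hypotheses of the theorem: compact, simply connected, non-K\"ahler, quasi-regular BHE. It also means checking the one extra input the theorem leaves open, namely that the initial Bismut connection is non-flat.

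\textbf{Hypotheses of the theorem.} For $S^3\times S^3$, the metrics of \cite[Theorem~1.6]{ApostolovLahdiliLee2026} are quasi-regular by construction. Simple connectivity is topological. The BHE structure is non-K\"ahler because $b_2(S^3\times S^3)=0$, so no K\"ahler metric exists. For $N_1$ and $N_2$, the structures of \cite[Theorem~3(a),(b)]{ALLR2026} are regular, hence quasi-regular. Both connected sums are simply connected by van Kampen, since every summand is. They are non-K\"ahler because their odd Betti number $b_3$ is positive, being $8$ and $12$; alternatively, the construction yields a genuinely torsionful pluriclosed structure.

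\textbf{Conclusions of the theorem.} Theorem~\ref{thm:BHE-threefold-quaternionic-rotation} then supplies the $S^2$-family $\{J_q\}$ with fixed metric and common characteristic connection. For every $J_q$ it gives a skew Nijenhuis tensor, $dT^B=0$ and $\rho^B=0$. It also gives non-integrability with pointwise $N_{J_q}\neq0$ away from $q_0=(1,0,0)$, and integrability at $\pm q_0$. The identity $R^B_{J_q}=R^B_{J_1}$ reduces non-flatness to showing that the original BHE connection satisfies $R^B_{J_1}\not\equiv0$.

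\textbf{Main obstacle: non-flatness.} I expect this to be the hardest step. The approach is to argue by contradiction. Suppose $R^B\equiv0$ on a compact simply connected manifold. Then, by Cartan--Schouten type results for flat metric connections with closed skew torsion, $(N,g)$ would be locally, and by simple connectivity globally, a compact Lie group with a bi-invariant metric. In real dimension six such a group is $S^3\times S^3$ carrying the Samelson-type structure.
\begin{itemize}
\item For $N_1$ and $N_2$ this immediately gives a contradiction, because neither is diffeomorphic to a Lie group: the group $S^3\times S^3$ has $b_2=0$, while $b_2(N_i)>0$.
\item For $S^3\times S^3$, the quasi-regular metrics are explicitly non-Samelson, which excludes the bi-invariant flat case.
\end{itemize}
If that classification route proves delicate, the alternative is the quotient description (\ref{eq:BHE-threefold-curvatures}). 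The transverse data $g_H=\frac R2 g_K^T$ with non-constant $R$ force non-zero curvature. This is analogous to the non-constant scalar curvature argument in Corollary~\ref{cor:Labc-quaternionic-examples}.
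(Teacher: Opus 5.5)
Your structure matches the paper's. You verify the hypotheses of Theorem~\ref{thm:BHE-threefold-quaternionic-rotation}, take everything except non-flatness from it, and prove $R^B_{J_1}\not\equiv0$ by contradiction. The difference is the classification you invoke.

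The paper uses the Hermitian classification of compact Bismut-flat manifolds \cite{WangYangZheng2020}. There, a compact simply connected non-K\"ahler Bismut-flat threefold is a central Calabi--Eckmann threefold. That makes it Samelson, contradicting the hypothesis on the Apostolov--Lahdili--Lee metrics, and diffeomorphic to $S^3\times S^3$, contradicting $b_2(N_1)=3$, $b_2(N_2)=5$.

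You use the Riemannian Cartan--Schouten classification of flat metric connections with skew torsion. That works and is more elementary. In dimension six no $S^7$ factor can occur. The global parallel frame on the compact simply connected manifold then integrates to a simply transitive action of a compact simply connected six-dimensional group, necessarily $S^3\times S^3$. The $b_2$ contradiction for $N_1,N_2$ then needs no complex geometry.

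For $S^3\times S^3$ you should add the step you left implicit. Since $\nabla^BJ_1=0$, $J_1$ is constant in the parallel (left-invariant) frame. So $(g,J_1)$ is a bi-invariant metric with a left-invariant orthogonal complex structure, i.e.\ a Samelson geometry. That, not bi-invariance of $g$ alone, contradicts the non-Samelson hypothesis; the paper gets it directly from the citation.

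Your fallback via $g_H=\frac R2 g_K^T$ is not needed. As stated it is not yet an argument: you do not show how non-constant $R$ forces $R^B\neq0$, and the paper asserts non-constancy only for the Apostolov--Lahdili--Lee metrics.

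One sub-argument is wrong. Positivity of $b_3(N_1)=8$ and $b_3(N_2)=12$ does not obstruct K\"ahler metrics. The Hodge constraint on odd-degree Betti numbers is only evenness, which these satisfy; Calabi--Yau threefolds have $b_3>0$.

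What the theorem needs is that the BHE metrics of \cite{ALLR2026} are non-K\"ahler. This is part of the cited result, is what the paper uses, and is covered by your ``alternatively'' clause. If you want a topological reason, every class in $H^2(N_i)$ has vanishing cube, since products of degree-two classes vanish in these connected sums. So no K\"ahler class can exist.
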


\begin{proof}
The manifold $S^3\times S^3$ is simply connected, and the metrics of
\cite[Theorem~1.6]{ApostolovLahdiliLee2026} are compact quasi-regular
non-K\"ahler BHE metrics with non-constant transverse scalar curvature
which are not locally isometric to a Samelson geometry. Hence
Theorem~\ref{thm:BHE-threefold-quaternionic-rotation} applies.

If the corresponding Bismut connection were flat, the classification
theorem of Wang--Yang--Zheng \cite{WangYangZheng2020} would imply that
a simply connected compact non-K\"ahler Bismut-flat threefold is a
central Calabi--Eckmann threefold, hence a Samelson geometry. This
contradicts the preceding property. Therefore
$ R^B\not\equiv0$. Since quaternionic rotation leaves the characteristic connection
unchanged, the same holds for every rotated member.

For the second family, by
\cite[Theorem~3(a),(b)]{ALLR2026}, each $N_i$ admits a non-K\"ahler
regular BHE structure, hence in particular a
quasi-regular one. Since both manifolds
are simply connected,
Theorem~\ref{thm:BHE-threefold-quaternionic-rotation} applies and
gives the asserted quaternionic family, including the characterization
of its integrable members.

It remains only to verify non-flatness. By the classification of
compact Bismut-flat manifolds in \cite{WangYangZheng2020}, a compact
simply connected Bismut-flat complex threefold is diffeomorphic to
$S^3\times S^3$. However,
\[
 b_2(N_1)=3,
 \qquad
 b_2(N_2)=5,
 \qquad
 b_2(S^3\times S^3)=0.
\]
Thus the initial Bismut connection on each $N_i$ is not flat.
Again, since quaternionic rotation leaves the characteristic
connection unchanged, every member of the corresponding family has
non-flat characteristic connection.
\end{proof}

\begin{remark}
Corollaries~\ref{cor:Labc-quaternionic-examples} and
\ref{cor:BHE-nonflat-families} have the same structural feature: the almost
complex structure moves in a quaternionic two-sphere while the metric,
characteristic connection, torsion and characteristic curvature remain
fixed.  The non-integrability is detected by a curvature form which is
\((1,1)\) at the two poles and acquires a nonzero \((2,0)+(0,2)\)-part away
from them.
\end{remark}

\section{Lifting to generalized Ricci flow with moving $J$} \label{Section6}
For the particular projected flow considered in this paper there is also a
weaker, flow-dependent stationary condition
\begin{equation}\label{eq:projected-ABHE}
\mathbb P_J\bigl((\rho^B_\omega)^{1,1}\bigr)=0.
\end{equation}
Every strong ABHE metric satisfies \eqref{eq:projected-ABHE}, but the converse
need not hold because the projection can annihilate a nonzero curvature
component.

The almost-SKT constraint has a direct interpretation in generalized Ricci
flow.  Associate to an admissible form the generalized metric datum
\begin{equation}\label{eq:iota-generalized-metric}
\iota_J(\omega):=(g_\omega,H_\omega),
\qquad H_\omega:=T^B_\omega.
\end{equation}
Then \(dH_\omega=0\), and for a variation \(\alpha\) one has
\begin{equation}\label{eq:d-iota-generalized-metric}
D\iota_J(\alpha)
=
\left(\mathcal I^{-1}(\alpha),
      d^c\alpha+\mathcal N_J(\alpha)\right).
\end{equation}
Thus the admissible space is naturally mapped into the configuration space of
generalized Ricci flow, whose points are pairs \((g,H)\) with \(H\) closed. Recall that the generalized Ricci flow is given by
\eqref{eq:generalized-Ricci-flow}.
\subsection{The metric identity and the infinitesimal generalized-Ricci lift}

The relation with generalized Ricci flow can be made considerably more precise.
It is useful first to remove a possible source of confusion: the Bismut--Ricci
form appearing in the constrained flow already contains the correct generalized
Ricci metric tensor, up to the Lee-form diffeomorphism gauge.  No integrability
of $J$ is needed for this statement.

Let
\begin{equation}\label{eq:alphaB-hB-dynamic}
\alpha_B:=-(\rho^B)^{1,1},
\qquad
h_B:=\mathcal I^{-1}(\alpha_B).
\end{equation}
{With this notation, the metric identity takes the following form.}
\begin{prop}
\label{prop:almost-G1-metric-GRF-identity}
Let $(M^{2n},g,J)$ be of class $G_1$, so that the characteristic torsion $T$ is
a $3$-form, and assume $dT=0$.  Then
\begin{equation}\label{eq:almost-G1-metric-GRF-identity}
 h_B
 =-\operatorname{Ric}^g+\frac14T^2
   -\frac12\mathcal L_{\theta^\sharp}g.
\end{equation}
 Thus, up to the Lee-form diffeomorphism gauge and a constant rescaling of
time, the unprojected Bismut--Ricci evolution agrees with the metric equation
of  the generalized Ricci flow.
\end{prop}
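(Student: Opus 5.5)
The plan is to express $h_B=\mathcal I^{-1}(-(\rho^B)^{1,1})$ through the Ricci tensor of the characteristic connection, using identity \eqref{eq:rhoB-RicB} of Ivanov--Stanchev. That identity is valid for any $G_1$ structure; it was used earlier only together with $\rho^B=0$. With $dT=0$ the last term drops, and
\[
\rho^B(X,Y)=\operatorname{Ric}^B(X,JY)+(\nabla^B_X\theta)(JY).
\]
Hence, for $\mathcal I^{-1}(\rho^B)(X,Y)=-\rho^B(X,JY)$, replacing $Y$ by $JY$ gives
\[
\mathcal I^{-1}(\rho^B)=\operatorname{Ric}^B+\nabla^B\theta
\]
as bilinear forms, with the convention $\mathcal I^{-1}(\alpha)(X,Y)=-\alpha(X,JY)$. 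Taking the $(1,1)$-part of $\rho^B$ corresponds, under $\mathcal I^{-1}$, to the $J$-invariant symmetric part of the right-hand side. I would therefore write
\[
h_B=-\bigl(\operatorname{Ric}^B+\nabla^B\theta\bigr)^{J,\mathrm{sym}}.
\]

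The next step is to identify the symmetric part. Since $\nabla^B=\nabla^g+\tfrac12T$ with $T$ skew, the symmetric part of $\nabla^B\theta$ equals that of $\nabla^g\theta$, namely $\tfrac12\mathcal L_{\theta^\sharp}g$. For the Ricci term, I would use \eqref{eq:Ricci-comparison}. Here $\delta T$ is skew and $T^2$ is symmetric, so
\[
\operatorname{Sym}\operatorname{Ric}^B=\operatorname{Ric}^g-\tfrac14T^2.
\]
Combining these, the full symmetric part of $\operatorname{Ric}^B+\nabla^B\theta$ is $\operatorname{Ric}^g-\tfrac14T^2+\tfrac12\mathcal L_{\theta^\sharp}g$. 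The claimed identity would then follow once we know that this tensor is already $J$-invariant, and that the $J$-anti-invariant part of the skew part does not contaminate the $(1,1)$-projection.

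The main obstacle is this $J$-invariance. The cleanest route is to verify it via the second Bismut Ricci identity of Ivanov--Stanchev: the $J$-anti-invariant part of $\operatorname{Ric}^B+\nabla^B\theta$ is expressed through $dT$ (and the Nijenhuis terms). For $G_1$ structures this is the analogue of the integrable fact that $\rho^B$ has $(2,0)+(0,2)$ part determined by $dT$ and $\theta$. I would try to show that the anti-invariant symmetric part vanishes when $dT=0$. A fallback is to compute with the Bianchi identity for $\nabla^B$, which has the form $b(R^B)=\tfrac14 dT+\sigma_T$-terms, and to contract with $J$. This would show that $\operatorname{Ric}^B(JX,JY)-\operatorname{Ric}^B(X,Y)$ is a combination of $dT$ and derivatives of $\theta$ along $T$ that cancel against the anti-invariant part of $\nabla^B\theta$. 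Once that is settled, $h_B=-\operatorname{Ric}^g+\tfrac14T^2-\tfrac12\mathcal L_{\theta^\sharp}g$. The closing remark then follows by comparing with \eqref{eq:generalized-Ricci-flow}: time is rescaled by a factor $2$, and the Lie derivative term is absorbed by pulling back along the flow of $\tfrac12\theta^\sharp$.
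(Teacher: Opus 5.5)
Your reduction matches the paper's. Set $S:=\operatorname{Ric}^B+\nabla^B\theta$; then \eqref{eq:rhoB-RicB} with $dT=0$ gives $\rho^B(X,Y)=S(X,JY)$. Moreover $\operatorname{Sym}S=\operatorname{Ric}^g-\frac14T^2+\frac12\mathcal L_{\theta^\sharp}g$ follows from \eqref{eq:Ricci-comparison} and $\operatorname{Sym}\nabla^B\theta=\operatorname{Sym}\nabla^g\theta$, as you say. The gap is the step you call the main obstacle. You leave open whether $\operatorname{Sym}S$ is $J$-invariant, i.e. whether the $(1,1)$-projection removes only the skew part of $S$. You propose to settle this through a second Ricci identity or a Bianchi-identity contraction, but you carry out neither. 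Your hedge about possible Nijenhuis contributions shows you are unsure such a computation would close. As written, the proposal does not establish $h_B=-\operatorname{Sym}S$.

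The missing idea is a one-line observation, not a further curvature identity: $\rho^B$ is a two-form. From $\rho^B(X,Y)=S(X,JY)$ and $\rho^B(X,Y)=-\rho^B(Y,X)$ you get $S(X,JY)=-S(Y,JX)$. Replacing $Y$ by $JY$ and renaming gives
\[
S(JX,JY)=S(Y,X).
\]
Hence $\operatorname{Sym}S$ is $J$-invariant and the skew part of $S$ is $J$-anti-invariant. So the $J$-invariant part of $S=\mathcal I^{-1}(\rho^B)$, which is what $\mathcal I^{-1}$ of $(\rho^B)^{1,1}$ picks out, is exactly $\operatorname{Sym}S$. Equivalently, $\beta^{1,1}(X,Y)=\frac12\bigl(\beta(X,Y)+\beta(JX,JY)\bigr)$ gives directly $(\rho^B)^{1,1}(X,JY)=-\frac12\bigl(S(X,Y)+S(Y,X)\bigr)$, which is the line the paper's proof uses.

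This symmetry of $S$ does not even depend on $dT=0$, since the $dT$-term in \eqref{eq:rhoB-RicB} is itself skew in $X,Y$. The hypothesis $dT=0$ is only needed to remove that term from $\rho^B$. With this in place, $h_B=-\operatorname{Sym}S$, and your remaining steps give the identity; the detour through the second Ricci identity is unnecessary.
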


\begin{proof}
Put
\[
S:=\operatorname{Ric}^B+\nabla^B\theta.
\]
Since $dT=0$, identity~\eqref{eq:rhoB-RicB} gives
$\rho^B(X,Y)=S(X,JY)$.  As $\rho^B$ is a two-form,
\[
(\rho^B)^{1,1}(X,JY)
=-\frac12\bigl(S(X,Y)+S(Y,X)\bigr).
\]
With the convention \eqref{eq:I-final},
$h_B(X,Y)=(\rho^B)^{1,1}(X,JY)$, hence
\[
h_B=-\operatorname{Sym}\operatorname{Ric}^B
    -\operatorname{Sym}(\nabla^B\theta).
\]
The skew part of the characteristic Ricci tensor is
$-(1/2)\delta T$, while \eqref{eq:Ricci-comparison}
yields
\[
\operatorname{Sym}\operatorname{Ric}^B
=\operatorname{Ric}^g-\frac14T^2.
\]
Finally the symmetric parts of $\nabla^B\theta$ and $\nabla^g\theta$ agree. Hence
\[
\operatorname{Sym}(\nabla^B\theta)=\frac12\mathcal L_{\theta^\sharp}g.
\]
This proves \eqref{eq:almost-G1-metric-GRF-identity}.
\end{proof}
 For the comparison with the Bismut--Ricci evolution, we use the equivalent
time parametrization of \eqref{eq:generalized-Ricci-flow} in which the
right-hand sides are divided by \(2\). This is the parametrization naturally
paired with pluriclosed flow in
\cite[Proposition~9.8 and Remark~9.11]{GarciaFernandezStreets2020}. Thus
\begin{equation}\label{eq:half-speed-GRF}
\partial_tg=\mathcal G(g,H):=-\operatorname{Ric}^g+\frac14H^2,
\qquad
\partial_tH=\mathcal Q(g,H):=-\frac12dd_g^*H.
\end{equation}
 Proposition~\ref{prop:almost-G1-metric-GRF-identity} says
\begin{equation}\label{eq:metric-GRF-Lee-decomposition}
\mathcal G(g,T)=h_B+\frac12\mathcal L_{\theta^\sharp}g.
\end{equation}

The remaining issue is therefore entirely torsional.  Define the fixed-$J$
torsion velocity associated with the unprojected Bismut--Ricci direction by
\begin{equation}\label{eq:tauB-fixed-J}
\tau_B
:=d^c\alpha_B+\mathcal N_J(\alpha_B).
\end{equation}
After applying the Lee diffeomorphism gauge generated by
$\frac12\theta^\sharp$, the induced torsion velocity is
\[
\tau_B+\frac12\mathcal L_{\theta^\sharp}T.
\]
Accordingly, introduce the \emph{torsion-lifting defect}
\begin{equation}\label{eq:torsion-lifting-defect}
\mathfrak D_J
:=2\tau_B+\mathcal L_{\theta^\sharp}T+dd_g^*T.
\end{equation}
Then the unprojected fixed-$J$ Bismut--Ricci evolution, followed by the Lee
form diffeomorphism gauge, induces generalized Ricci flow on $(g,T)$ if and
only if
\begin{equation}\label{eq:defect-zero-fixed-J}
\mathfrak D_J=0.
\end{equation}
In the integrable pluriclosed case this identity is exactly the content of the
gauge equivalence in \cite[Proposition~9.8 and Corollary~9.9]
{GarciaFernandezStreets2020}.

For a genuinely non-integrable structure one can allow the almost complex
structure itself to absorb the defect.  To formulate this invariantly, define
for every compatible almost Hermitian pair the covariant three-tensor
\begin{equation}\label{eq:T-hat-extension}
\widehat T(g,J):=d_J^c\omega+\widetilde N_{g,J}.
\end{equation}
On the $G_1$ locus this is precisely the characteristic torsion three-form.
If $K\in\operatorname{End}(TM)$ satisfies
\begin{equation}\label{eq:K-tangent-OJ}
KJ+JK=0,
\qquad
K^{*,g}=-K,
\end{equation}
then $K$ is tangent to the space of $g$-orthogonal almost complex structures.
We define the first-order \emph{torsion response operator}
\begin{equation}\label{eq:RJK-operator}
\mathscr R_{g,J}K
:=D_J\widehat T(g,J)[K].
\end{equation}
The operator is first order because both $d_J^c\omega$ and $N_J$ contain one
derivative of $J$.

\subsection{Ellipticity of the moving-$J$ skew-torsion constraint}

The analytic nature of the $J$-lifting equation can be described more
precisely.  Let
\[
\mathcal K_J
:=\{K\in\operatorname{End}(TM):KJ+JK=0,\ K^{*,g}=-K\}
\]
be the bundle tangent to the space of $g$-orthogonal almost complex
structures.  Since the tensor
\(\widetilde N_{g,J}(X,Y,Z)=g(N_J(X,Y),Z)\) is always skew in its first two
arguments, define the quotient bundle
\[
\mathcal C_J
:=\bigl(\Lambda^2T^*M\otimes T^*M\bigr)/\Lambda^3T^*M
\]
and let $q$ denote the quotient map.  The infinitesimal failure of the
Nijenhuis tensor to remain totally skew under a fixed-metric variation of
$J$ is the first-order operator
\begin{equation}\label{eq:skew-defect-J-operator}
\mathscr S_{g,J}K
:=q\!\left(D_J\widetilde N_{g,J}[K]\right)
:\Gamma(\mathcal K_J)\longrightarrow\Gamma(\mathcal C_J).
\end{equation}
Because $D_J(d_J^c\omega)[K]$ is a genuine three-form, one also has
\begin{equation}\label{eq:S-qR}
\mathscr S_{g,J}=q\circ\mathscr R_{g,J}.
\end{equation}
{The following result shows that preserving the skew-Nijenhuis condition
under a fixed-metric deformation of $J$ is an overdetermined elliptic
problem.}
\begin{prop}
\label{prop:J-G1-overdetermined-elliptic}
At every almost Hermitian structure, the first-order operator
$\mathscr S_{g,J}$ has injective principal symbol for every nonzero real
covector.  Hence it is overdetermined elliptic.  In particular,
$\mathscr S_{g,J}^{*,g}\mathscr S_{g,J}$ is elliptic of order two.
\end{prop}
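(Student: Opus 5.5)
The plan is to compute the principal symbol of $\mathscr S_{g,J}$ explicitly from the bracket formula for $N_J$, and then to show by a direct frame argument that a symbol value which is totally skew after lowering forces $K=0$. Throughout, $g$ is fixed and $J_t$ is a curve of $g$-orthogonal almost complex structures with $\dot J=K\in\mathcal K_J$. From
\[
N_J(X,Y)=[JX,JY]-J[JX,Y]-J[X,JY]-[X,Y]
\]
we get
\[
D_JN_J[K](X,Y)=[KX,JY]+[JX,KY]-K[JX,Y]-J[KX,Y]-K[X,JY]-J[X,KY].
\]
The terms in which $K$ stands outside a bracket contain no derivative of $K$. They, together with the lowering by the fixed metric $g$, are therefore of order zero and drop out of the symbol.

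\textbf{Step 1: the symbol.} In the four remaining brackets we replace the derivative falling on $K$ by $\xi$. Writing $J^*\xi=\xi\circ J$, this gives
\[
\sigma_1(\mathscr S_{g,J})(x,\xi)K
=q\Bigl(g\bigl((J^*\xi)(X)KY-\xi(X)JKY-(J^*\xi)(Y)KX+\xi(Y)JKX,\;Z\bigr)\Bigr),
\]
up to an irrelevant overall factor $i$. Call the tensor inside $q$ by the name $\Sigma(X,Y,Z)$. Injectivity of the symbol means: if $\Sigma$ is totally skew, then $K=0$. Normalize $|\xi|=1$, put $u=\xi^\sharp$, and let
\[
P=\operatorname{span}\{u,Ju\},\qquad W=P^\perp .
\]
The subspace $W$ is $J$-invariant, and both $\xi$ and $J^*\xi$ vanish on $W$. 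The values of $\xi$ and $J^*\xi$ on $P$ are
\[
\xi(u)=1,\quad \xi(Ju)=0,\quad (J^*\xi)(u)=0,\quad (J^*\xi)(Ju)=-1 .
\]

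\textbf{Step 2: $K$ vanishes on $W$.} For $X\in W$ the formula collapses to
\[
\Sigma(X,Y,\cdot)=g\bigl(\xi(Y)JKX-(J^*\xi)(Y)KX,\ \cdot\,\bigr),
\]
so in particular
\[
\Sigma(X,Z,\cdot)=0\ \ (Z\in W),\qquad \Sigma(X,u,\cdot)=g(JKX,\cdot),\qquad \Sigma(X,Ju,\cdot)=g(KX,\cdot).
\]
Now use total skewness.
\begin{itemize}
\item For $Z\in W$: $g(JKX,Z)=\Sigma(X,u,Z)=-\Sigma(X,Z,u)=0$. Since $J$ preserves $W$, this gives $KX\perp W$.
\item Skewness in the last two slots gives $\Sigma(X,u,u)=0$ and $\Sigma(X,Ju,Ju)=0$, that is, $g(JKX,u)=0$ and $g(KX,Ju)=0$.
\item Exchanging slots gives $g(JKX,Ju)=\Sigma(X,u,Ju)=-\Sigma(X,Ju,u)=-g(KX,u)$. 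The left side equals $g(KX,u)$ by orthogonality of $J$, so $g(KX,u)=0$.
\end{itemize}
Hence $KX\perp W$, $KX\perp u$ and $KX\perp Ju$, so $KX=0$ for every $X\in W$.

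\textbf{Step 3: $K$ vanishes on $P$, and conclusion.} Since $K$ is $g$-skew and $K|_W=0$, its image is orthogonal to $W$, so $K$ preserves the real two-plane $P$. On an oriented Euclidean two-plane every skew endomorphism is a multiple of $J|_P$, hence commutes with $J$. Combined with $KJ=-JK$, this forces $K|_P=0$. Thus $K=0$, and the symbol is injective for every $\xi\neq0$; note that this argument does not use $n\ge2$. Injectivity of the symbol makes $\sigma_1(\mathscr S)^*\sigma_1(\mathscr S)$ positive definite, exactly as in the proof of Proposition~\ref{prop:overdetermined-elliptic-final}. Therefore $\mathscr S_{g,J}^{*,g}\mathscr S_{g,J}$ is elliptic of order two.

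The main obstacle is Step 1: deciding correctly which terms in the variation carry a derivative of $K$, with consistent signs. A sign error there could make the two $P$-components in Step 2 appear to cancel instead of forcing $g(KX,u)=0$. I would double-check the computation on the flat model $\mathbb R^{2n}$ with $K$ a linear function times a constant tensor.
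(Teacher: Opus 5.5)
Your proposal is correct and follows essentially the same route as the paper: compute the principal symbol of the linearized lowered Nijenhuis tensor, adapt a frame to $\xi$, and use total skewness to kill every component of $K$. The paper works in complexified form with the $(0,2)$-form $\mu$ and simply asserts the symbol $\zeta_{\bar i}\mu_{\bar j\bar k}-\zeta_{\bar j}\mu_{\bar i\bar k}$ up to a constant, whereas you derive the real symbol explicitly. Your Step 2 ($K|_W=0$) is the real counterpart of the paper's vanishing of $\mu_{\bar a\bar b}$ and $\mu_{\bar 1\bar a}$, and your Step 3 corresponds to the automatic identity $\mu_{\bar 1\bar 1}=0$. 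The signs you were worried about in Step 1 are right. As a consistency check, your $\Sigma$ has only $(0,2)\otimes(0,1)$ components and their conjugates, and its $(0,2)\otimes(0,1)$ part is a nonzero multiple of the paper's expression.
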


\begin{proof}
Complexify at a point and choose a unitary frame
$\{Z_1,\ldots,Z_n\}$ of $T^{1,0}M$.  An element
$K\in\mathcal K_J$ is determined by
\[
K Z_{\bar i}=\mu_{\bar i}^{\ k}Z_k,
\]
and the orthogonality condition implies that, after lowering the holomorphic
index,
\[
\mu_{\bar i\bar j}:=g_{k\bar j}\mu_{\bar i}^{\ k}
\]
is skew-symmetric in $\bar i,\bar j$.  Thus the complexified symbol variable
is naturally a $(0,2)$-form $\mu$.

Let $0\neq\xi\in T_x^*M$ be real and put
$\zeta:=\xi^{0,1}\neq0$.  Up to a universal nonzero constant depending only
on the normalization of $N_J$, the $(0,2)\otimes(0,1)$ component of the
principal symbol of the linearized lowered Nijenhuis tensor is
\begin{equation}\label{eq:symbol-linearized-N-J}
\bigl(\sigma_1(D_J\widetilde N)(\xi)\mu\bigr)_{\bar i\bar j\bar k}
=
\zeta_{\bar i}\mu_{\bar j\bar k}
-
\zeta_{\bar j}\mu_{\bar i\bar k}.
\end{equation}
The symbol of $\mathscr S_{g,J}$ is the class of
\eqref{eq:symbol-linearized-N-J} modulo $\Lambda^{0,3}$.

Choose the unitary frame so that $\zeta$ is a nonzero multiple of
$\bar\varepsilon^1$.  Suppose that this class vanishes.  Then
\eqref{eq:symbol-linearized-N-J} must be totally alternating.  For
$a,b>1$ one has
\[
\bigl(\sigma_1(D_J\widetilde N)(\xi)\mu\bigr)_{\bar1\bar a\bar b}
=c\,\mu_{\bar a\bar b},
\qquad
\bigl(\sigma_1(D_J\widetilde N)(\xi)\mu\bigr)_{\bar a\bar b\bar1}=0,
\]
with $c\neq0$.  Total skew-symmetry therefore gives
$\mu_{\bar a\bar b}=0$.  Taking a repeated $\bar1$ index gives
\[
\bigl(\sigma_1(D_J\widetilde N)(\xi)\mu\bigr)_{\bar1\bar a\bar1}
=c\,\mu_{\bar a\bar1}=0,
\]
so also $\mu_{\bar1\bar a}=0$.  Hence $\mu=0$, proving injectivity of the
symbol.  The last assertion follows because the principal symbol of
$\mathscr S^{*}\mathscr S$ is
$\sigma_1(\mathscr S)^*\sigma_1(\mathscr S)$ and is positive definite for
$\xi\neq0$.
\end{proof}
{ On a compact manifold, the ellipticity of $\mathcal S_{g,J}$ has the
following finite-dimensional consequence.}
\begin{cor}
\label{cor:finite-dimensional-J-G1-deformations}
If $M$ is compact, then
\begin{equation}\label{eq:Z1-G1-J}
\mathcal Z^1_{G_1}(g,J)
:=\ker\mathscr S_{g,J}
\end{equation}
is finite dimensional and consists of smooth sections.  It is the space of
infinitesimal orthogonal variations of $J$ for which the lowered Nijenhuis
tensor remains a three-form to first order.
\end{cor}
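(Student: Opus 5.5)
The plan is to deduce the statement from Proposition~\ref{prop:J-G1-overdetermined-elliptic} by the same mechanism as Proposition~\ref{prop:overdetermined-elliptic-final}, with $\mathscr S_{g,J}$ in place of $L_J$. The operator
\[
P:=\mathscr S_{g,J}^{*,g}\mathscr S_{g,J}:\Gamma(\mathcal K_J)\longrightarrow\Gamma(\mathcal K_J)
\]
is defined using the $L^2(g)$ inner products induced by $g$ on the vector bundles $\mathcal K_J$ and $\mathcal C_J$. For the latter we identify $\mathcal C_J$ with the $g$-orthogonal complement of $\Lambda^3T^*M$ in $\Lambda^2T^*M\otimes T^*M$; this is a smooth subbundle because $\Lambda^3T^*M$ has constant rank. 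By Proposition~\ref{prop:J-G1-overdetermined-elliptic}, $P$ is a linear elliptic operator of order two with smooth coefficients on the compact manifold $M$.

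First I would check that $\ker\mathscr S_{g,J}=\ker P$ on smooth sections. One inclusion is trivial. For the other, $PK=0$ gives
\[
0=\langle PK,K\rangle_{L^2}=\|\mathscr S_{g,J}K\|_{L^2}^2 .
\]
This integration by parts is legitimate because $M$ is closed and $\mathscr S_{g,J}^{*,g}$ is the formal adjoint.

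Next I would apply standard elliptic theory for $P$ on the compact manifold. It yields a Fredholm extension $H^{s+2}\to H^s$. Its kernel is finite dimensional and independent of $s$, and every distributional or $L^2$ solution of $PK=0$ is smooth by elliptic regularity. Hence $\mathcal Z^1_{G_1}(g,J)$ is finite dimensional and consists of smooth sections. If one prefers to define $\ker\mathscr S_{g,J}$ in a weak Sobolev sense, injectivity of the symbol gives the same conclusion directly. The overdetermined elliptic estimate
\[
\|K\|_{H^{1}}\le C\bigl(\|\mathscr S_{g,J}K\|_{L^2}+\|K\|_{L^2}\bigr),
\]
together with Rellich compactness, shows that the unit ball of the kernel is compact.

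Finally, the geometric interpretation follows from the definition. For a curve $J_t$ of $g$-orthogonal almost complex structures with $J_0=J$ and $\dot J_0=K$, the class $q(\widetilde N_{g,J_t})$ is exactly the failure of total skew-symmetry. Its first-order variation is $q\bigl(D_J\widetilde N_{g,J}[K]\bigr)=\mathscr S_{g,J}K$, since $q$ is linear and independent of $t$ because $g$ is fixed. So it vanishes precisely for $K$ in the kernel.

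The only point needing care is the first step: making $\mathcal C_J$ a Hermitian bundle so that the adjoint makes sense. Everything else is routine.
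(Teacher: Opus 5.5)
Your proposal is correct and follows the paper's argument. Like the paper, you identify $\ker\mathscr S_{g,J}$ with $\ker\bigl(\mathscr S_{g,J}^{*,g}\mathscr S_{g,J}\bigr)$ by integration by parts on the closed manifold, and then invoke standard elliptic theory, with ellipticity supplied by Proposition~\ref{prop:J-G1-overdetermined-elliptic}; your extra care in giving $\mathcal C_J$ a fibre metric and in checking the first-order interpretation through $q\circ D_J\widetilde N_{g,J}$ makes explicit what the paper treats as definitional.
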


\begin{proof}
The kernel agrees with the kernel of the elliptic operator
$\mathscr S_{g,J}^{*,g}\mathscr S_{g,J}$.  Standard elliptic theory on a
compact manifold gives the conclusion.
\end{proof}
Relation~\eqref{eq:S-qR} also gives the corresponding
ellipticity statement for the full torsion response operator.
\begin{cor}
\label{cor:R-J-overdetermined-elliptic}
The torsion response operator $\mathscr R_{g,J}$ itself has injective
principal symbol.  Thus $\mathscr R_{g,J}^{*,g}\mathscr R_{g,J}$ is elliptic
of order two. Consequently, every distributional
solution of $\mathscr R_{g,J}K=F$ with smooth right-hand side
is smooth. If $M$ is compact, then $\ker\mathscr R_{g,J}$
is finite dimensional, and the solutions are unique modulo
 this kernel.
\end{cor}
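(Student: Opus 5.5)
The plan is to deduce the statement from Proposition~\ref{prop:J-G1-overdetermined-elliptic} via the factorization \eqref{eq:S-qR}, and then to apply standard overdetermined elliptic theory. The key observation is that $q$ is a zeroth-order bundle map. Hence, for every nonzero real covector $\xi$,
\[
\sigma_1(\mathscr S_{g,J})(x,\xi)=q\circ\sigma_1(\mathscr R_{g,J})(x,\xi).
\]
One point needs checking first: both operators must genuinely have order one, with the principal symbol of $\mathscr R_{g,J}$ taking values in $\Lambda^2T^*M\otimes T^*M$. This holds because $\widehat T$ is a covariant three-tensor built from one derivative of $J$, and $D_J(d^c_J\omega)[K]$ is a three-form whose symbol is killed by $q$. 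Once that is in place, suppose $\sigma_1(\mathscr R_{g,J})(x,\xi)K=0$. Applying $q$ gives $\sigma_1(\mathscr S_{g,J})(x,\xi)K=0$, so Proposition~\ref{prop:J-G1-overdetermined-elliptic} forces $K=0$. This proves injectivity of the symbol of $\mathscr R_{g,J}$.

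Next I show that $\mathscr R_{g,J}^{*,g}\mathscr R_{g,J}$ is elliptic of order two. Its principal symbol is $\sigma_1(\mathscr R)^*\sigma_1(\mathscr R)$, where the adjoint is taken pointwise with respect to the fibre metrics induced by $g$. This symbol is positive definite on $\mathcal K_{J,x}$ for $\xi\neq0$, exactly as in the proof of Proposition~\ref{prop:overdetermined-elliptic-final}.

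For regularity, let $K$ be a distributional solution of $\mathscr R_{g,J}K=F$ with $F$ smooth. Applying the formal adjoint gives $\mathscr R^*\mathscr R K=\mathscr R^*F$, which is smooth. Local elliptic regularity for the elliptic operator $\mathscr R^*\mathscr R$ then shows that $K$ is smooth. On compact $M$, any element of $\ker\mathscr R_{g,J}$ lies in $\ker(\mathscr R^*\mathscr R)$. Conversely, integrating $\langle\mathscr R^*\mathscr R K,K\rangle$ by parts gives $\|\mathscr R K\|^2_{L^2}=0$. So the two kernels coincide, and the kernel of an elliptic operator on a compact manifold is finite dimensional. Uniqueness modulo the kernel is then immediate from linearity: two solutions with the same right-hand side differ by an element of $\ker\mathscr R_{g,J}$.

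The only step with real content is the symbol factorization, and in particular confirming that the three-form contribution $D_J(d^c_J\omega)[K]$ has principal symbol lying in $\Lambda^3$. This is what lets the quotient computation of Proposition~\ref{prop:J-G1-overdetermined-elliptic} control the full symbol. The statement \eqref{eq:S-qR} already encodes this, so I expect no obstacle beyond citing it.
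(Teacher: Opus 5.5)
Your proposal is correct and follows the paper's proof essentially step for step. You get injectivity of $\sigma_1(\mathscr R_{g,J})$ from the factorization \eqref{eq:S-qR} together with Proposition~\ref{prop:J-G1-overdetermined-elliptic}, and from that the ellipticity of $\mathscr R_{g,J}^{*,g}\mathscr R_{g,J}$. Regularity comes from applying the adjoint to $\mathscr R_{g,J}K=F$. On compact $M$, integration by parts gives $\ker(\mathscr R_{g,J}^{*,g}\mathscr R_{g,J})=\ker\mathscr R_{g,J}$, which yields finite dimensionality and uniqueness modulo the kernel.
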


\begin{proof}
If $\sigma_1(\mathscr R)(\xi)K=0$, then
\eqref{eq:S-qR} gives
$\sigma_1(\mathscr S)(\xi)K=0$, hence $K=0$ by
Proposition~\ref{prop:J-G1-overdetermined-elliptic}. Therefore
$\mathscr R_{g,J}^{*,g}\mathscr R_{g,J}$ is elliptic.

If $\mathscr R_{g,J}K=F$ distributionally and $F$ is smooth,
then
\[
\mathscr R_{g,J}^{*,g}\mathscr R_{g,J}K
=
\mathscr R_{g,J}^{*,g}F,
\]
so elliptic regularity implies that $K$ is smooth.
If $M$ is compact, integration by parts gives
\[
\ker\bigl(\mathscr R_{g,J}^{*,g}\mathscr R_{g,J}\bigr)
=
\ker\mathscr R_{g,J},
\]
and this kernel is finite dimensional by elliptic theory.
Finally, any two solutions with the same right-hand side
differ by an element of $\ker\mathscr R_{g,J}$.
\end{proof}

Overdetermined ellipticity gives regularity and a finite-dimensional kernel,
but not surjectivity; solvability of the inhomogeneous lifting equation still
requires compatibility conditions.

For comparison, in the unprojected equation the right-hand side need not be a
three-form because $\alpha_B$ need not preserve the skew-Nijenhuis
constraint.  Applying $q$ to
\eqref{eq:unprojected-lifting-equation} gives the first necessary elliptic
compatibility equation
\begin{equation}\label{eq:unprojected-skew-lift-equation}
\mathscr S_{g,J}K=-q(\tau_B).
\end{equation}
Thus even before matching the three-form component of the generalized Ricci
torsion equation, the non-integrable lift must solve an overdetermined
elliptic equation.

\begin{prop}[Unprojected flow]
\label{prop:exact-moving-J-unprojected-lift}
Assume $dT=0$ and let $\alpha_B,h_B,\tau_B$ be as above.  Consider
\begin{equation}\label{eq:unprojected-moving-J-system}
\partial_tg
=h_B+\frac12\mathcal L_{\theta^\sharp}g,
\qquad
\partial_tJ
=\frac12\mathcal L_{\theta^\sharp}J+K,
\end{equation}
where $K$ satisfies \eqref{eq:K-tangent-OJ}.  Then the induced pair
$(g,T=\widehat T(g,J))$ satisfies 
\eqref{eq:half-speed-GRF} if and only if
\begin{equation}\label{eq:unprojected-lifting-equation}
\mathscr R_{g,J}K=-\frac12\mathfrak D_J.
\end{equation}
In particular, $K=0$ is sufficient precisely when the torsion-lifting defect
vanishes.
\end{prop}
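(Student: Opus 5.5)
The plan is a direct chain-rule computation. The metric component is handled already by the identity \eqref{eq:metric-GRF-Lee-decomposition}, and the torsion component then reduces to a linear equation for $K$.

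\emph{Metric component.} Under \eqref{eq:unprojected-moving-J-system} we have $\partial_tg=h_B+\frac12\mathcal L_{\theta^\sharp}g$. By Proposition~\ref{prop:almost-G1-metric-GRF-identity}, in the form \eqref{eq:metric-GRF-Lee-decomposition}, this equals $\mathcal G(g,T)$. So the metric equation of \eqref{eq:half-speed-GRF} holds identically, and only the torsion equation $\partial_tT=-\frac12dd_g^*T$ needs to be analysed.

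\emph{Torsion component.} We differentiate $T=\widehat T(g,J)$ along the system:
\[
\partial_tT=D_g\widehat T[\partial_tg]+D_J\widehat T[\partial_tJ].
\]
Split $\partial_tJ$ into the gauge part $\frac12\mathcal L_{\theta^\sharp}J$ and the correction $K$, and split $\partial_tg$ correspondingly into $h_B$ and $\frac12\mathcal L_{\theta^\sharp}g$. The operator $\widehat T$ is natural under diffeomorphisms. Hence the combined infinitesimal action of the vector field $\frac12\theta^\sharp$ on $(g,J)$ produces $\frac12\mathcal L_{\theta^\sharp}\widehat T(g,J)=\frac12\mathcal L_{\theta^\sharp}T$.

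For the fixed-$J$ part, $h_B=\mathcal I^{-1}(\alpha_B)$, where $\omega$ varies by $\alpha_B$. The computation leading to \eqref{eq:variation-T-final} uses only that $J$ is fixed and that $T$ is read off through $d^c\omega+\widetilde N$; it does not use the $G_1$ property for the perturbed metric. It therefore gives $D_g\widehat T[h_B]=d^c\alpha_B+\mathcal N_J(\alpha_B)=\tau_B$. Here one must check that, with $J$ fixed, $\widehat T$ depends linearly on $\omega$ through the same formula even off the $G_1$ locus. This holds, since $\widetilde N_{g,J}(X,Y,Z)=-\omega(N_J(X,Y),JZ)$ is linear in $\omega$.

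The remaining term is $D_J\widehat T[K]=\mathscr R_{g,J}K$ by definition \eqref{eq:RJK-operator}. Altogether,
\[
\partial_tT=\tau_B+\tfrac12\mathcal L_{\theta^\sharp}T+\mathscr R_{g,J}K.
\]
Comparing with $-\frac12dd_g^*T$ and using \eqref{eq:torsion-lifting-defect}, the torsion equation is equivalent to
\[
\mathscr R_{g,J}K=-\tfrac12\bigl(2\tau_B+\mathcal L_{\theta^\sharp}T+dd_g^*T\bigr)=-\tfrac12\mathfrak D_J,
\]
which is \eqref{eq:unprojected-lifting-equation}. Setting $K=0$ shows that $K=0$ works exactly when $\mathfrak D_J=0$.

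The main obstacle is the diffeomorphism-naturality step. Taken separately, $\widehat T$ applied to $\frac12\mathcal L_{\theta^\sharp}g$ with $J$ fixed is not itself a Lie derivative of $T$; only the combined variation $(\mathcal L_Vg,\mathcal L_VJ)$ yields $\mathcal L_V\widehat T$. I would therefore organise the bookkeeping as $(\partial_tg,\partial_tJ)=(h_B,K)+\frac12(\mathcal L_{\theta^\sharp}g,\mathcal L_{\theta^\sharp}J)$ and apply naturality to the second summand.

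I would also confirm two further points. First, $\frac12\mathcal L_{\theta^\sharp}J+K$ remains tangent to $g$-orthogonal almost complex structures in the appropriate combined sense, so that $\widehat T$ is evaluated on compatible pairs. Second, closedness $dT=0$ enters only through Proposition~\ref{prop:almost-G1-metric-GRF-identity}.
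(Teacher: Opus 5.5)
Your proposal is correct and follows essentially the same route as the paper. The metric equation is exactly \eqref{eq:metric-GRF-Lee-decomposition}, and naturality of $\widehat T$ together with the fixed-$J$ variation formula gives $\partial_tT=\tau_B+\frac12\mathcal L_{\theta^\sharp}T+\mathscr R_{g,J}K$, which compared with $-\frac12dd_g^*T$ is \eqref{eq:unprojected-lifting-equation}; your splitting $(\partial_tg,\partial_tJ)=(h_B,K)+\frac12(\mathcal L_{\theta^\sharp}g,\mathcal L_{\theta^\sharp}J)$ simply spells out the paper's one-line naturality step.
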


\begin{proof}
The metric equation is exactly \eqref{eq:metric-GRF-Lee-decomposition}.  By
naturality of $\widehat T$ under diffeomorphisms, differentiation of
\eqref{eq:unprojected-moving-J-system} gives
\[
\partial_tT
=\tau_B+\frac12\mathcal L_{\theta^\sharp}T
 +\mathscr R_{g,J}K.
\]
Equating this with the second equation of \eqref{eq:half-speed-GRF} is
precisely \eqref{eq:unprojected-lifting-equation}.  Since the right-hand side
of the generalized Ricci $H$-equation is a three-form, the same equation also
forces the infinitesimal skew-torsion defect to vanish.  In particular, along
a smooth lift satisfying the equation at every time, the skew defect remains
zero if it vanishes initially.
\end{proof}

The previous proposition concerns the unprojected Bismut--Ricci direction.  It
is also useful to record the exact obstruction created by the projection used
in \eqref{eq:Bismut-Ricci-constrained-final}.  Put
\begin{equation}\label{eq:projected-alpha-h-tau}
\alpha_c:=\mathbb P_J\alpha_B,
\qquad
h_c:=\mathcal I^{-1}(\alpha_c),
\qquad
\tau_c:=d^c\alpha_c+\mathcal N_J(\alpha_c),
\end{equation}
and define the metric projection defect
\begin{equation}\label{eq:metric-projection-defect}
\mathfrak P_J
:=h_B-h_c
=\mathcal I^{-1}\!\left((\operatorname{Id}-\mathbb P_J)\alpha_B\right).
\end{equation}
Then
\begin{equation}\label{eq:projected-metric-GRF-decomposition}
\mathcal G(g,T)
=h_c+\frac12\mathcal L_{\theta^\sharp}g+\mathfrak P_J.
\end{equation}
Thus a variation of $J$ alone can never remove $\mathfrak P_J$, since it does
not change the metric.  The most general diffeomorphism correction is obtained
by setting
\begin{equation}\label{eq:X-Lee-plus-Y}
X=\frac12\theta^\sharp+Y
\end{equation}
and asking that
\begin{equation}\label{eq:metric-gauge-lifting-equation}
\mathcal L_Yg=\mathfrak P_J.
\end{equation}
This is the first lifting equation.

\begin{thm}[Infinitesimal constrained lift criterion]
\label{thm:exact-GRF-lift-constrained}
Let $(g,J)$ satisfy the almost-SKT constraints, and let
$h_c$ and $\tau_c$ be the metric and torsion variations
determined by the constrained Bismut--Ricci equation
\eqref{eq:Bismut-Ricci-constrained-final}.
At a fixed time, there exists an infinitesimal variation
with varying $J$ of the form
\begin{equation}\label{eq:general-constrained-GRF-lift}
\partial_tg=h_c+\mathcal L_Xg,
\qquad
\partial_tJ=\mathcal L_XJ+K,
\end{equation}
with $K$ satisfying \eqref{eq:K-tangent-OJ}, whose induced
metric and torsion variations satisfy
\[
\partial_tg=-\operatorname{Ric}^g+\frac14T^2,
\qquad
\partial_tT=-\frac12dd_g^*T,
\]
if and only if one can solve simultaneously \begin{align}
\mathcal L_{X-\frac12\theta^\sharp}g
&=\mathfrak P_J,
\label{eq:lift-system-metric}\\
\mathscr R_{g,J}K
&=-\frac12dd_g^*T-\tau_c-\mathcal L_XT.
\label{eq:lift-system-torsion}
\end{align}
Equivalently, after writing $X=\frac12\theta^\sharp+Y$,
the first equation is
\eqref{eq:metric-gauge-lifting-equation}
and the second is a first-order lifting equation for
the orthogonal almost complex variation $K$.
\end{thm}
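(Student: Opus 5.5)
The plan is to compute the induced variations of $g$ and $T=\widehat T(g,J)$ directly for a variation of the form \eqref{eq:general-constrained-GRF-lift}, and then compare them with the half-speed generalized Ricci flow \eqref{eq:half-speed-GRF}. The argument splits into a metric part and a torsion part, each handled by a linear identity already established earlier in the paper. Throughout we work at a fixed time, so only first-order (infinitesimal) identities are needed.

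\emph{Metric equation.} By \eqref{eq:projected-metric-GRF-decomposition}, which combines Proposition~\ref{prop:almost-G1-metric-GRF-identity} with the definition \eqref{eq:metric-projection-defect} of $\mathfrak P_J$, we have
\[
\mathcal G(g,T)=h_c+\tfrac12\mathcal L_{\theta^\sharp}g+\mathfrak P_J .
\]
The requirement $\partial_tg=\mathcal G(g,T)$ with $\partial_tg=h_c+\mathcal L_Xg$ is therefore equivalent to
\[
\mathcal L_Xg-\tfrac12\mathcal L_{\theta^\sharp}g=\mathfrak P_J .
\]
By linearity of $X\mapsto\mathcal L_Xg$ this is exactly \eqref{eq:lift-system-metric}. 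The variation $K$ does not enter here, since $K$ changes $J$ at fixed metric.

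\emph{Torsion equation.} Decompose the variation of $(g,J)$ as the sum of three pieces:
\begin{itemize}
\item the fixed-$J$ metric variation $\mathcal I^{-1}(\alpha_c)=h_c$;
\item the infinitesimal diffeomorphism generated by $X$, which acts on both $g$ and $J$;
\item the fixed-metric variation $K$ of $J$.
\end{itemize}
The derivative of $\widehat T$ is linear in the variation, so the induced $\partial_tT$ is the sum of the three corresponding contributions:
\begin{itemize}
\item The first contribution is $\tau_c=d^c\alpha_c+\mathcal N_J(\alpha_c)$, by \eqref{eq:variation-T-final}.
\item The second is $\mathcal L_XT$, by naturality of $\widehat T(g,J)=d^c_J\omega+\widetilde N_{g,J}$ under diffeomorphisms. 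Both terms are built invariantly from $(g,J)$, so $\widehat T(\phi^*g,\phi^*J)=\phi^*\widehat T(g,J)$; differentiating along the flow of $X$ gives $\mathcal L_XT$.
\item The third is $\mathscr R_{g,J}K$, by definition \eqref{eq:RJK-operator}.
\end{itemize}
Setting the sum equal to $-\tfrac12dd_g^*T$ gives \eqref{eq:lift-system-torsion}.

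Both directions of the equivalence follow from this, since each step is an equality of linear expressions. Writing $X=\tfrac12\theta^\sharp+Y$ turns the metric equation into \eqref{eq:metric-gauge-lifting-equation}, which gives the restatement at the end of the theorem.

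The only point requiring care is the torsion step. The sum of the three contributions is a three-form exactly when the skew-defect $q(\tau_c+\mathcal L_XT+\mathscr R K)$ vanishes. Here $\tau_c$ has vanishing $q$-part because $\alpha_c\in\Gamma(\mathcal E_J)$, and $\mathcal L_XT$ is a three-form. So no hidden extra constraint arises beyond \eqref{eq:lift-system-torsion}; its right-hand side is a three-form, which forces $\mathscr S_{g,J}K=0$ automatically. I would also check that the Lie-derivative convention for $\partial_tJ=\mathcal L_XJ$ matches the sign used for the metric part, but I expect this to be routine bookkeeping.
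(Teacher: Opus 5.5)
Your proposal is correct and follows the paper's proof: the metric equation reduces to \eqref{eq:lift-system-metric} via \eqref{eq:projected-metric-GRF-decomposition}, and the torsion equation follows from splitting $\partial_tT$, by linearity and naturality of $\widehat T$, as $\tau_c+\mathcal L_XT+\mathscr R_{g,J}K$. Your closing remark that the three-form right-hand side forces $\mathscr S_{g,J}K=0$ is not needed for the equivalence, but it is consistent with Proposition~\ref{prop:projected-J-lift-finite-dimensional}.
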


\begin{proof} 
All variations are evaluated at the given almost-SKT pair.
Equation \eqref{eq:projected-metric-GRF-decomposition}
shows that the metric variation in
\eqref{eq:general-constrained-GRF-lift} agrees with the
generalized Ricci flow metric equation, in the chosen
time parametrization, if and only if
\eqref{eq:lift-system-metric} holds.
By the naturality of $\widehat T$, the induced torsion
variation is
\[
\partial_tT
=
\tau_c+\mathcal L_XT+\mathscr R_{g,J}K.
\]
Thus $\partial_tT=-\frac12dd_g^*T$ is equivalent to
\eqref{eq:lift-system-torsion}.
The compatibility conditions \eqref{eq:K-tangent-OJ}
ensure that the additional $K$-variation is tangent
to the space of $g$-orthogonal almost complex structures.
\end{proof}
 Note that the criterion is infinitesimal and does not by itself
imply the existence of a smooth time-dependent lift.

For the projected constrained flow, the previous criterion has the
following finite-dimensional consequence.
\begin{prop}
\label{prop:projected-J-lift-finite-dimensional}
Assume $M$ is compact and let $X$ solve the metric lifting equation
\eqref{eq:lift-system-metric}.  For the  constrained Bismut–Ricci variation,

set
\[
F_X:=-\frac12dd_g^*T-\tau_c-\mathcal L_XT.
\]
Then $F_X$ is a  three-form.  Any solution of the torsion lifting
equation
\[
\mathscr R_{g,J}K=F_X
\]
necessarily satisfies
$K\in\mathcal Z^1_{G_1}(g,J).$

Hence the space
\begin{equation}\label{eq:W-J-lift}
\mathcal W_{g,J}
:=\mathscr R_{g,J}\bigl(\mathcal Z^1_{G_1}(g,J)\bigr)
\subset\Omega^3(M)
\end{equation}
is finite dimensional, and an infinitesimal lift of the projected evolution  with varying $J$
exists for the chosen $X$

if and only if
\begin{equation}\label{eq:finite-dimensional-lift-condition}
F_X\in\mathcal W_{g,J}.
\end{equation}
In particular, if $\mathcal Z^1_{G_1}(g,J)=0$, then no nontrivial $J$-motion
can absorb the torsion defect: one must have $F_X=0$ and $K=0$.
\end{prop}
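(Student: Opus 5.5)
The plan has three steps: show $F_X$ is a genuine three-form, apply the quotient map $q$ to the lifting equation, and invoke the finite-dimensionality of Corollary~\ref{cor:finite-dimensional-J-G1-deformations}.

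\emph{Step 1: $F_X$ is a three-form.} Take each term in turn. The term $dd_g^*T$ is a three-form because $T$ is one. The projected direction satisfies $\alpha_c=\mathbb P_J\alpha_B\in\mathscr V_J\subset\Gamma(\mathcal E_J)$. By \eqref{eq:variation-T-final}, $\tau_c=d^c\alpha_c+\mathcal N_J(\alpha_c)$, and $\mathcal N_J(\alpha_c)$ is alternating precisely because $\mathcal I^{-1}(\alpha_c)\in\ker\mathcal B_{N_J}$. Hence $\tau_c$ is a three-form. Finally, $\mathcal L_XT$ is a three-form since $T$ is. Thus $q(F_X)=0$.

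\emph{Step 2: solutions lie in $\mathcal Z^1_{G_1}(g,J)$.} Suppose $\mathscr R_{g,J}K=F_X$. Applying $q$ and using \eqref{eq:S-qR} gives
\[
\mathscr S_{g,J}K=q(\mathscr R_{g,J}K)=q(F_X)=0,
\]
so $K\in\ker\mathscr S_{g,J}=\mathcal Z^1_{G_1}(g,J)$. Corollary~\ref{cor:finite-dimensional-J-G1-deformations} (compactness) says $\mathcal Z^1_{G_1}$ is finite dimensional and smooth. Its image $\mathcal W_{g,J}$ under the linear map $\mathscr R_{g,J}$ is therefore finite dimensional. It lies in $\Omega^3(M)$ because $D_J(d^c_J\omega)[K]$ is always a three-form, while $D_J\widetilde N[K]$ is alternating exactly when $\mathscr S K=0$.

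\emph{Step 3: the equivalence.} By Theorem~\ref{thm:exact-GRF-lift-constrained}, with $X$ fixed and already solving \eqref{eq:lift-system-metric}, an infinitesimal lift exists iff \eqref{eq:lift-system-torsion} has a solution $K$ satisfying \eqref{eq:K-tangent-OJ}. If such a $K$ exists, Step~2 gives $K\in\mathcal Z^1_{G_1}$, so $F_X\in\mathcal W_{g,J}$. Conversely, if $F_X=\mathscr R K$ with $K\in\mathcal Z^1_{G_1}\subset\Gamma(\mathcal K_J)$, then $K$ solves the torsion equation. For the last claim: if $\mathcal Z^1_{G_1}=0$, then $\mathcal W_{g,J}=0$, so solvability forces $F_X=0$, and Step~2 forces $K=0$.

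The main obstacle is in Step~1: checking carefully that $\tau_c$ is alternating. This depends on the identification $\partial_t\widetilde N=\mathcal N_J(\alpha)$ together with the fact that the kernel of $\mathcal B_{N_J}$ describes exactly the variations that preserve skew-symmetry. Both facts were established in Section~2, so the remaining steps are formal.
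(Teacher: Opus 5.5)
Your proposal is correct and follows the paper's proof. Like the paper, you check each term of $F_X$ is a three-form (using $\alpha_c\in\mathscr V_J\subset\Gamma(\mathcal E_J)$ for $\tau_c$), apply $q$ and \eqref{eq:S-qR} to get $\mathscr S_{g,J}K=0$, and invoke Corollary~\ref{cor:finite-dimensional-J-G1-deformations}. Your Step~3 simply writes out, via Theorem~\ref{thm:exact-GRF-lift-constrained}, the remaining claims that the paper calls immediate.
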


\begin{proof}
Since $\alpha_c\in\mathscr V_J\subset\Gamma(\mathcal E_J)$, its induced
torsion variation $\tau_c$ is a three-form.  The remaining terms in $F_X$
are also three-forms.  Applying $q$ to the lifting equation and using
\eqref{eq:S-qR} therefore gives
$\mathscr S_{g,J}K=0$.  The finite-dimensionality follows from
Corollary~\ref{cor:finite-dimensional-J-G1-deformations}, and the remaining
claims are immediate.
\end{proof}
\begin{remark}
\label{rem:GRF-lift-analytic-problem}
 Theorem~\ref{thm:exact-GRF-lift-constrained} is an infinitesimal criterion,
not a general existence theorem for a time-dependent moving-$J$ lift.
  The two lifting
operators are both overdetermined elliptic.  Indeed the Killing operator
\[
\mathscr K_g:Y\longmapsto\mathcal L_Yg
\]
has injective symbol $Y\mapsto2\,\xi\odot Y^\flat$, while
Proposition~\ref{prop:J-G1-overdetermined-elliptic} gives the corresponding
statement for the skew-torsion $J$-equation. Thus, whenever either lifting equation has a distributional
solution with smooth right-hand side, elliptic regularity
makes the solution smooth. If $M$ is compact, the corresponding
homogeneous kernels are finite dimensional.
Exact solvability, however, requires the right-hand side of each lifting equation to lie in the range of the corresponding operator.
 
For the projected constrained flow the conclusion is stronger:
Proposition~\ref{prop:projected-J-lift-finite-dimensional} shows that the
allowable $J$-correction already lies in the finite-dimensional space
$\mathcal Z^1_{G_1}(g,J)$.  Hence the torsion defect must belong to the
finite-dimensional correction space $\mathcal W_{g,J}$. 
 Thus exact gauge equivalence requires additional compatibility
conditions that do not follow from the projected evolution
equation alone.
\end{remark}

The previous lifting criterion simplifies in the following two situations.
\begin{cor}
For the unprojected Bismut--Ricci evolution, the lifting equation of
Proposition \ref{prop:exact-moving-J-unprojected-lift} is automatically satisfied with
$X=\frac12\theta^\sharp$ and $K=0$ in each of the following cases.

\begin{enumerate}
\item \begin{sloppypar}
If $J$ is integrable and the metric is pluriclosed, the unprojected
Bismut--Ricci velocity is already tangent to the pluriclosed constraint and
\eqref{eq:defect-zero-fixed-J} is precisely the standard pluriclosed/GRF gauge
equivalence of \cite[Corollary~9.9 and Remark~9.11]
{GarciaFernandezStreets2020}.
\end{sloppypar}
\item If $(g,J)$ is strong ABHE, then $\alpha_B=0$.  Equation~\eqref{eq:deltaT-Lee}
and $dT=0$ give
$dd_g^*T=-\mathcal L_{\theta^\sharp}T$, so
$\mathfrak D_J=0$.  Hence the strong ABHE structure is stationary in the
Bismut--Ricci gauge and self-similar in generalized Ricci gauge.  If moreover
$\theta=0$, it is an actual fixed point of generalized Ricci flow.
\end{enumerate}
\end{cor}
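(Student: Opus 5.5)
The plan is to verify directly that the choice $X=\frac12\theta^\sharp$, $K=0$ solves the lifting equation \eqref{eq:unprojected-lifting-equation} of Proposition~\ref{prop:exact-moving-J-unprojected-lift}. With $K=0$ the left-hand side $\mathscr R_{g,J}K$ vanishes. The equation therefore reduces to the vanishing of the torsion-lifting defect, $\mathfrak D_J=2\tau_B+\mathcal L_{\theta^\sharp}T+dd_g^*T=0$. The metric equation needs no further argument: it is \eqref{eq:metric-GRF-Lee-decomposition}, which holds in both cases because $dT=0$ and Proposition~\ref{prop:almost-G1-metric-GRF-identity} applies.

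For case (1), $J$ is integrable, so $N_J=0$ and $\mathcal N_J=0$. This gives $\tau_B=d^c\alpha_B$, and the constraint reduces to $dd^c\omega=0$. The condition $\mathfrak D_J=0$ then reads
\[
-2d^c(\rho^B)^{1,1}+\mathcal L_{\theta^\sharp}d^c\omega+dd^*d^c\omega=0.
\]
This is exactly the torsion part of the pluriclosed/GRF gauge equivalence. I would cite \cite[Proposition~9.8, Corollary~9.9 and Remark~9.11]{GarciaFernandezStreets2020}, checking only that our conventions match: $\omega(X,Y)=g(X,JY)$, the half-speed parametrization \eqref{eq:half-speed-GRF}, and the Lee vector field $\frac12\theta^\sharp$. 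I expect this convention bookkeeping, in particular the sign of $d^c$ and the factor relating $\theta$ to the Lee form used there, to be the main obstacle. It is not a genuine mathematical difficulty.

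For case (2), the strong ABHE condition $\rho^B=0$ gives $\alpha_B=0$ and hence $\tau_B=0$. It then remains to show $\mathcal L_{\theta^\sharp}T+dd^*_gT=0$. By Cartan's formula and $dT=0$, we have $\mathcal L_{\theta^\sharp}T=d\,\iota_{\theta^\sharp}T$. Identity \eqref{eq:deltaT-Lee} gives $d^*_gT=\delta T=d\theta-\iota_{\theta^\sharp}T$, so
\[
dd^*_gT=dd\theta-d\,\iota_{\theta^\sharp}T=-\mathcal L_{\theta^\sharp}T.
\]
One should confirm that the sign convention relating $\delta$ and $d^*$ agrees with the one used in \eqref{eq:deltaT-Lee}. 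Granting this, $\mathfrak D_J=0$. Consequently $h_B=0$, and the flow $\partial_t g=\frac12\mathcal L_{\theta^\sharp}g$, $\partial_tT=\frac12\mathcal L_{\theta^\sharp}T$ is purely a diffeomorphism motion. The structure is therefore stationary in the Bismut--Ricci gauge and self-similar (a steady soliton) for generalized Ricci flow. If $\theta=0$, the generating vector field vanishes, and the conclusion coincides with Corollary~\ref{cor:balanced-ABHE-GRF-fixed}: $(g,T)$ is an actual fixed point.
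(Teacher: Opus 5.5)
Your proposal is correct and follows the paper's own argument, which is embedded in the statement itself. With $K=0$ the lifting equation reduces to $\mathfrak D_J=0$; case (1) is the cited Garcia-Fernandez--Streets gauge equivalence, and case (2) follows from $\alpha_B=0$ together with \eqref{eq:deltaT-Lee}. Your explicit steps $\mathcal L_{\theta^\sharp}T=d\,\iota_{\theta^\sharp}T$ (from $dT=0$) and $dd\theta=0$ simply spell out what the paper leaves implicit.
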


This formulation should be compared with the complex case in
\cite[Proposition~9.8--Remark~9.11]{GarciaFernandezStreets2020}: there the
metric and torsion defects both vanish identically and the moving complex
structure satisfies
\[
\partial_tJ=\frac12\mathcal L_{\theta^\sharp}J.
\]
In the almost complex setting the new operator $\mathscr R_{g,J}$ measures
exactly how much extra motion of $J$ is required to keep the characteristic
torsion on the generalized Ricci trajectory.  This is closely analogous in
spirit to the strong Sasaki-with-torsion flow, where a torsion-preserving
evolution is proved to be gauge-equivalent to generalized Ricci flow
\cite{BrienzaFinoFowdarGrantcharov2026}.

 In real dimension six, Theorem~\ref{thr3.16-dimsix} shows that every non-integrable
almost-SKT structure is automatically almost-CYT. \sloppy{We can therefore apply
the available ACYT results under their respective additional hypotheses.} 
It has been proven in \cite[Theorem~6.5]{IvanovStanchev2023} that a compact ACYT six-manifold with closed characteristic
torsion is a generalized gradient Ricci soliton and that, for the canonical
potential, the field $V=\theta^\sharp-\nabla f$ is $\nabla^B$-parallel and
preserves the full $SU(3)$-structure.
  {It has been further proven that} 
on a compact ACYT six-manifold with co-closed Lee form the characteristic
curvature is an $SU(3)$-instanton if and only if the torsion is
$\nabla^B$-parallel; the same equivalence holds in the balanced ACYT case
without compactness \cite{IvanovUgarte2025}.  These results are consistent
with Corollaries~\ref{cor:balanced-ABHE-GRF-fixed} and
\ref{cor:coclosed-Lee-ABHE-fixed}, while providing substantially stronger
\(SU(3)\)-specific curvature information.

\begin{remark} As in \cite[Theorem~3.5]{KennonStreets}, the strong ABHE equations imply
the steady generalized Ricci soliton identities;
equations \eqref{eq:RicB-Lee}--\eqref{eq:Ricci-comparison} give the corresponding local formulas in our conventions. The genuinely
non-integrable information relevant here is carried by the parallel field
$V$ and by the obstruction~\eqref{eq:LVJ-obstruction} to preserving $J$ under the soliton gauge.
\end{remark}

\section{Further questions}

{ 
The previous results isolate several natural problems left open by the
finite-dimensional existence argument.

\begin{enumerate}
\item   \emph{Non-rigid elliptic dynamics.}
Proposition~~\ref{prop:OE-not-rigidity} shows that \emph{(OE)} does not imply pointwise
Nijenhuis rigidity. It is natural to investigate  almost-SKT
structures on compact manifolds  satisfying \emph{(OE)} for which
$\dim\mathscr V_J>1$, and to study long-time existence,
convergence after normalization, and stability of homothetic
solutions in this setting.
\item \emph{Non-stationary examples in higher dimension.}
\sloppy{Theorem~\ref{thr3.16-dimsix} and Corollary~\ref{cor3.17-dimsix} show that in real dimension six every
compact non-integrable almost-SKT structure gives a stationary
constrained Bismut--Ricci flow. On the other hand, Corollary~\ref{cor:dim8-OE-fails}
shows that \emph{(OE)} fails identically in real dimension eight.}
Thus the first dimension in which a genuinely non-integrable,
non-stationary example in the overdetermined-elliptic regime could
occur is real dimension ten. It is natural to ask for a compact
almost-SKT manifold of real dimension at least ten satisfying
\emph{(OE)} and
$P_J\bigl((\rho^B_{\omega_0})^{1,1}\bigr)\neq0.$
Such an example would give a genuinely non-stationary constrained
Bismut--Ricci flow.

\item \emph{From local to compact examples.}
The cohomogeneity-one family of
Proposition~\ref{prop:S5-cohom1-strong-ABHE} is strong ABHE with non-flat
characteristic connection, but only local.  It would be interesting to know
whether any solution of \eqref{eq:S5-strong-ODE} extends to a complete
almost Hermitian six-manifold, or closes up to a compact one.
\end{enumerate}

\smallskip

{\bf Acknowledgements}  The authors would like to thank Beatrice Brienza  and Jeff Streets for very  helpful comments and discussions.
The first author is partially supported by Project PRIN 2022 \lq \lq Real and Complex Manifolds: Geometry and Holomorphic Dynamics", by GNSAGA (INdAM), and by a grant from the Simons Foundation (\# 944448).

\end{document}